\theoremstyle{plain}
\newtheorem{theor}{Theorem}
\theoremstyle{plain}
\newtheorem{Claim}{Claim}
\theoremstyle{remark}
\newtheorem{rem}{Remark}
\theoremstyle{plain}
\newtheorem{lemma}[theor]{Lemma}
\newtheorem*{conjecture}{Conjecture}
\def\R{{\mathbb R}}
\def\dist{{\rm d_{BM}}}
\def\Vol{{\rm Vol}}
\def\sign{{\rm sign}}
\def\Class{{\mathcal C}}
\def\Ill{{\mathcal I}}
\def\Gauss{\nu}
\def\solid{\sigma}
\def\IllumSet{\eta}
\def\pseudov{{\mathcal V}}
\def\specdir{{\bf w}}
\def\conv{{\rm conv}}
\def\Id{{\rm I}}
\def\Inter{{\rm Int\,}}
\def\Sph{{\mathbb S}}
\begin{document}

\title[Cube is the strict local maximizer ]{Cube is a strict local maximizer for\\the illumination number}

\author[Galyna Livshyts, Konstantin Tikhomirov]{Galyna Livshyts, Konstantin Tikhomirov}
\address{School of Mathematics, Georgia Institute of Technology} \email{glivshyts6@math.gatech.edu}
\address{Department of Mathematics, Princeton University}
\email{kt12@math.princeton.edu}

\subjclass[2010]{52A20, 52C17} 
\keywords{convex body, illumination, covering by homothetic copies}

\maketitle

\begin{abstract}
It was conjectured by Levi, Hadwiger, Gohberg and Markus that the boundary of any convex body in
$\R^n$ can be illuminated by at most $2^n$ light sources, and, moreover,
$2^n-1$ light sources suffice unless the body is a parallelotope.
We show that if a convex body is close to the cube in the Banach--Mazur metric,
and it is not a parallelotope, then indeed $2^n-1$ light sources suffice to illuminate its boundary.
Equivalently, any convex body sufficiently close to the cube, but not isometric to it, can be covered by $2^n-1$
smaller homothetic copies of itself.
\end{abstract}

\section{Introduction}

The Levi--Hadwiger--Gohberg--Markus illumination conjecture \cite{H57,H60} is one of the famous old questions in
discrete geometry, which has several equivalent formulations. The viewpoint we adopt in this note is due to Boltyanski
(see, for example, \cite{BMS,B10,BK}):
Let $B$ be a convex body (i.e.\ compact convex set with non-empty interior) in $\R^n$
and let $\partial B$ denote its boundary.
We say that a point $x\in\partial B$ is {\it illuminated in direction $y\in\R^n\setminus\{0\}$}
if there is a small positive $\varepsilon$ such that $x+\varepsilon y$ lies in the interior of $B$.
Further, a collection $\{y^1,y^2,\dots,y^m\}$ of non-zero vectors {\it illuminates $B$}
if for any $x\in\partial B$ there is $i=i(x)\leq m$ such that $x$ is illuminated in direction $y^i$.
For any convex body $B$, denote by $\Ill(B)$ the cardinality of the smallest set of directions
sufficient to illuminate all the points from $\partial B$; it is called {\it the illumination number of $B$}.
\begin{conjecture}[Levi--Hadwiger--Gohberg--Markus]
For any convex body $B$ in $\R^n$ we have $\Ill(B)\leq 2^n$, and
the equality holds if and only if $B$ is a parallelotope.
\end{conjecture}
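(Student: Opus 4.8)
The plan is to split the statement into a uniform upper bound $\Ill(B)\le 2^n$ for every convex body $B\subset\R^n$ together with a rigidity statement characterizing the equality case. Since $\Ill(B)$ is an affine invariant, one should first dispose of the ``generic'' bodies and then localize the difficulty. If $B$ is smooth, each $x\in\partial B$ has a unique outer unit normal $u_x$, and $x$ is illuminated in direction $y$ exactly when $\langle y,u_x\rangle<0$; choosing the vertices $y^1,\dots,y^{n+1}$ of a regular simplex with centroid at the origin, one has $\sum_i\langle y^i,u\rangle=0$ for every $u\in\Sph^{n-1}$, so some $\langle y^i,u\rangle<0$, and hence $n+1$ directions illuminate every smooth body --- far below $2^n$. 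The entire difficulty thus lies with bodies whose boundary carries flat pieces and, especially, lower-dimensional faces meeting in ``corners'', and the cube is precisely the extremal body of this kind.

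For the general upper bound I would decompose $\partial B=\bigcup_{k=0}^{n-1}F_k$, where $F_k$ is the union of the relative interiors of the $k$-dimensional faces of $B$. A point in the relative interior of a face $F$ is illuminated in direction $y$ precisely when $y$ points strictly into $B$ transversally to $\mathrm{aff}(F)$, i.e.\ when $\langle y,u\rangle<0$ for every $u$ in the normal cone of $F$; this rephrases the task on each $F_k$ as a covering problem for the normal cones of the $k$-faces. The heart of the matter is then a single combinatorial--geometric estimate: exhibit $2^n$ directions whose open dual half-spaces simultaneously cover all of these normal cones, the count $2^n$ being forced by the $2^n$ pairwise ``opposite'' vertex cones of the cube. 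The natural attack is induction on $n$ --- peeling off a suitable pair of faces and distributing the directions as $2^n=2\cdot 2^{n-1}$ --- but to close the induction one must control how the facial structure of $B$ transforms under projection onto a hyperplane, and it is exactly here that every known approach has stalled.

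Granting the sharp bound, the rigidity part should follow by a stability argument. If $\Ill(B)=2^n$ then each of the $2^n$ extremal directions must be used to capacity, which forces $B$ to possess at least $2^n$ vertices whose normal cones are antipodally paired in a strong quantitative sense, and a convexity argument --- in the spirit of the local analysis of this paper near the cube --- would then pin $B$ down to a parallelotope. Indeed, the local theorem proved here, namely that a non-parallelotope within Banach--Mazur distance $1+\delta$ of the cube is illuminated by only $2^n-1$ directions, is exactly the neighborhood version of this rigidity statement and would serve as the base case of any deformation argument.

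The principal obstacle, which I do not expect to circumvent by routine work, is the uniform bound $\Ill(B)\le 2^n$ itself: this is the Levi--Hadwiger--Gohberg--Markus conjecture, still open already for $n=3$, with the best known general bounds exponentially larger than $2^n$. A realistic first target is to settle $n=3$, or to establish the bound for all zonotopes and, more generally, for bodies that decompose into finitely many cones over lower-dimensional convex bodies, where the dimension induction sketched above can be carried out rigorously; the present local result is best regarded as strong evidence for the conjecture rather than as a step in such an induction.
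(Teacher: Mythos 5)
This statement is the Levi--Hadwiger--Gohberg--Markus conjecture itself; it is open, the paper offers no proof of it, and your write-up --- to its credit --- does not actually claim one either. What you have produced is a correct assessment of the landscape plus two genuine fragments: the observation that $n+1$ simplex directions illuminate any \emph{smooth} body (this is a standard and correct argument: if $\sum_i y^i=0$ and all $\langle y^i,u_x\rangle\ge 0$, then all vanish, impossible since the $y^i$ span $\R^n$), and the reformulation of illumination of a face point as strict negativity on the normal cone. Neither fragment closes the gap. The proposed induction $2^n=2\cdot 2^{n-1}$ is exactly where the known approaches fail: after projecting onto a hyperplane, the faces of $B$ do not organize into faces of a lower-dimensional convex body in any way that lets you recycle an illuminating set for the projection, and you give no mechanism for ``peeling off a suitable pair of faces.'' Likewise the rigidity step presupposes the sharp bound and then invokes an unspecified ``deformation argument'' from the local theorem of this paper to all of convex-body space; no such globalization is known, and the paper explicitly positions its result as a strict local statement near the cube, not as a base case of an induction that reaches arbitrary bodies.

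Concretely: the paper proves only Theorem~1 (bodies within Banach--Mazur distance $1+\delta(n)$ of the cube, other than the cube, satisfy $\Ill(B)\le 2^n-1$), via pseudo-vertices, solid-angle estimates, and the classes $\Class_n(\varepsilon,\theta)$ of perturbed sign-vector illuminating sets. None of that machinery, nor anything in your sketch, yields the uniform bound $\Ill(B)\le 2^n$ for general $B$, which remains open for every $n\ge 3$ (the best general bounds are of order $4^n n\log n$, via Rogers). So the verdict is that there is a genuine and unavoidable gap --- the statement is a conjecture, and your proposal correctly identifies but does not bridge it.
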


As of this writing, the best known general upper bound for the illumination number is due to Rogers \cite{R57}:
\begin{equation*}
\Ill(B)\leq (n\log n+n\log\log n+5n)\frac{\Vol_n(B-B)}{\Vol_n(B)},
\end{equation*}
where $B-B$ is the Minkowski sum of $B$ and $-B$ (called the difference body of $B$), and $\Vol_n(\cdot)$ is the Lebesgue volume in $\R^n$
(see, in particular, \cite[Corollary~2.11]{BK} and \cite[Corollary~3.4.2]{B10}, as well as
\cite{LT} where a ``fully random'' proof of Rogers' theorem is given).
If $B$ is origin-symmetric then $\Vol_n(B-B)=2^n\Vol_n(B)$, whence
$\Ill(B)\leq (n\log n+n\log\log n+5n)2^n$.
For non-symmetric convex bodies, the relation $\Vol_n(B-B)\leq 4^n\Vol_n(B)$ due to Rogers and Shephard \cite{RS}
implies that $\Ill(B)\leq (n\log n+n\log\log n+5n)4^n$.

The illumination conjecture has been solved (or almost solved) in some special cases.
In particular, it is known that each origin-symmetric convex body in $\R^3$ can be illuminated in at most $8$ directions
\cite{Lassak}.
The conjecture holds true for so-called {\it belt polytopes} \cite{M}
and their generalization --- {\it the belt bodies} \cite{B96};
for bodies of constant width \cite{S}
and, more generally, for {\it the fat spindle bodies} \cite{B12};
for dual cyclic polytopes \cite{BB}.
Recently, it has been shown in \cite{T} that unit balls of {\it $1$-symmetric} normed spaces
different from $\ell_\infty^n$ can be illuminated in less than $2^n$ directions.
Finally, it is of interest to note that there exist convex bodies arbitrarily close to the Euclidean ball
in the Banach--Mazur metric whose illumination number is exponential in dimension \cite{Naszodi}.
Let us refer to \cite[Chapter~3]{B10} and \cite{BK} for more information on the subject.
Let us also mention a computer assisted approach to Hadwiger's conjecture \cite{zong}.

\medskip

The approach to the Illumination problem that we consider in this note
was inspired by the work \cite{NPRZ} devoted to estimating the product of volumes
of a convex body and its polar.
In what follows, for any $p\in[1,\infty]$ by $B_p^n$ we denote the closed unit ball of the
canonical $\|\cdot\|_p$--norm in $\R^n$.
{\it The Mahler conjecture}, one of the central questions
in convex geometry, asserts that for any origin-symmetric convex body $L$, the Mahler volume
$\Vol_n(L)\cdot \Vol_n(L^\circ)$ is greater or equal to the Mahler volume of the cube $\Vol_n(B_\infty^n)\cdot\Vol_n(B_1^n)$,
where $L^\circ$ denotes the polar body for $L$.
We refer to \cite{BM} (see also \cite{K}) for an ``isomorphic solution'' to this problem and related information. Very recently, this conjecture was verified in $\R^3$ in \cite{mahler}. In \cite{NPRZ} the Mahler conjecture was
confirmed in every dimension in a small neighborhood of the cube and, moreover,
it was shown that the cube is a {\it strict} local minimizer in the Banach--Mazur metric on the class of symmetric convex bodies.
Let us recall that for any two (not necessarily centrally-symmetric) convex bodies $B$ and $L$
in $\R^n$, {\it the Banach--Mazur distance} $\dist(B,L)$ between $B$ and $L$ is defined as
the infimum of $\lambda\geq 1$ such that there is an invertible linear operator $T_\lambda:\R^n\to\R^n$
and two vectors $x_\lambda$ and $y_\lambda$ satisfying $B\subset T_\lambda(L)+x_\lambda\subset \lambda B+y_\lambda$.
The main theorem of \cite{NPRZ} asserts that for every positive integer $n$ there is $\delta(n)>0$ such that any
origin-symmetric convex body $L$ with $1\neq \dist(L,B_\infty^n)\leq 1+\delta(n)$ satisfies
$\Vol_n(L)\cdot \Vol_n(L^\circ)>\Vol_n(B_\infty^n)\cdot\Vol_n(B_1^n)$.

\medskip

In this paper, we apply the ``local'' viewpoint of \cite{NPRZ}
in the context of the Illumination conjecture.
The nature of the Illumination problem is very different from the Mahler conjecture discussed above;
the most obvious distinction coming from the lack of continuity.
Whereas the volume is stable with respect to small perturbations of a convex body, the structure of
its boundary --- the layout of the extreme points, the combinatorial structure (in case of polytopes) ---
can change significantly even with infinitely small perturbations. It becomes an interesting feature
that certain ``discrete--geometric'' properties related to the illumination
(which we will mention later), remain stable in a small neighborhood of the cube.
The main non-technical result of this paper is the following theorem.
\begin{theor}\label{t: main}
For any $n\geq 3$ there is a $\delta=\delta(n)>0$ with the following property:
Let $B$ be a convex body in $\R^n$ and assume that $1\neq\dist(B,B_\infty^n)\leq 1+\delta$.
Then $B$ can be illuminated with at most $2^n-1$ directions.
\end{theor}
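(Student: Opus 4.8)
Since $\Ill(\cdot)$ is invariant under invertible affine maps, after composing with the linear operator and the two translations witnessing $\dist(B,B_\infty^n)\le1+\delta$ we may assume
\[
[-1,1]^n\ \subseteq\ B\ \subseteq\ (1+\delta')[-1,1]^n+z
\]
for some $z\in\R^n$ and some $\delta'$ of the same order as $\delta$; shrinking $\delta$ we treat $\delta'$ as arbitrarily small, and by an initial approximation we may also assume $B$ is a polytope. For $\xi\in\{-1,1\}^n$ write $O_\xi=\{u:\ \xi_iu_i\ge0\ \forall i\}$ and let $C_\xi\subseteq\partial B$ be the part of the boundary inside $\xi+[-\tfrac12,\tfrac12]^n$. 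The classical fact I would start from is that the $2^n$ antipodal directions $-\xi$, $\xi\in\{-1,1\}^n$, illuminate $B$: every outer normal of $B$ at a point of $C_\xi$ lies within $O(\delta)$ of $O_\xi$ and is therefore strictly negatively paired with $-\xi$, while a boundary point of $B$ lying near a face of the cube of dimension $<n$ has a ``wide'' set of illuminating directions and is covered by many of the $-\xi$. Writing $A(p):=\{d:\ \langle d,u\rangle<0\text{ for every }0\ne u\in N_B(p)\}$ for the open set of directions illuminating a vertex $p$ of $B$, illuminating $B$ amounts to piercing the family $\{A(p)\}_p$ by finitely many directions; the only ``narrow'', hence constraining, cones $A(p)$ occur at the vertices of $B$ lying near the vertices of the cube.

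The goal is to pierce all the $A(p)$ with $2^n-1$ directions, extracting the needed slack from the hypothesis that $B$ is not a parallelotope. I would first isolate the rigid configuration: call $B$ \emph{cube-like at $\xi$} if the face $E_\xi$ of $B$ in direction $\xi$ is a single vertex $p_\xi$ lying on exactly $n$ facets, the corner $C_\xi$ contains no other vertex of $B$, and each of these $n$ facets is the unique facet of $B$ whose normal is $O(\delta)$-close to the corresponding $\xi_ie_i$. The proof then rests on two complementary assertions. \textbf{(Rigidity.)} If $B$ is cube-like at every $\xi\in\{-1,1\}^n$ \emph{and} no single direction illuminates two of the corners $C_\xi,C_{\xi'}$ at once, then $B$ is a parallelotope. \textbf{(Slack.)} If $B$ fails to be cube-like at some $\xi$, or if some direction illuminates two corners at once, then $\Ill(B)\le 2^n-1$. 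These give the theorem: if $B$ is not a parallelotope, then by Rigidity we are in one of the cases covered by Slack, and we are done.

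For Slack, the ``two corners at once'' case is immediate: use that direction for the merged pair and the antipodal directions for the remaining $2^n-2$ cube vertices; here one checks — and this is where $n\ge3$ enters — that after discarding two antipodes every coordinate facet of the cube still has a surviving antipode handling the boundary of $B$ near its relative interior, and that the merging direction also handles the boundary points on the short edge joining the two merged corners. The failure-of-cube-likeness case is a finite case analysis on the type of failure ($E_\xi$ is not a single point; $p_\xi$ lies on more than $n$ facets; $C_\xi$ carries an extra vertex; $B$ has two facets with normals near the same $\xi_ie_i$): in each case one produces, by an explicit computation with the relevant normal cones, a single direction illuminating simultaneously all the ``new'' critical points and one neighbouring octant-vertex, again saving a direction. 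This is, for instance, the mechanism by which a cube with one corner truncated by a hyperplane is illuminated with $2^n-1$ lights — each of the $n$ vertices of the small new facet is illuminated together with a suitable adjacent vertex of the cube. For Rigidity, cube-likeness everywhere forces $B$ to have exactly $2n$ facets with normals $u_i^{\pm}$ ($u_i^{\pm}\approx\pm e_i$) and exactly $2^n$ vertices $p_\xi=\bigcap_i\{\langle u_i^{\xi_i},\cdot\rangle=c_i^{\xi_i}\}$ (using that two vertices sharing $n-1$ facets span an edge, so no vertex of $B$ lies near a lower-dimensional face of the cube); one then feeds in the ``$C_\xi$ and $C_{\xi\oplus e_m}$ cannot be merged'' hypotheses, which translate into $-u_m^\xi$ lying in the simplicial normal cone at $p_{\xi\oplus e_m}$, and deduces — from linear independence of the $u_i^{\xi_i}$ and near-orthogonality for $\delta$ small — that $u_m^-=-u_m^+$ for every $m$; hence $B$ lies between a pair of parallel supporting hyperplanes in each of the $n$ directions $u_m^+$, i.e.\ $B$ is a parallelotope.

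The step I expect to be the real obstacle is the Rigidity assertion, and inside it the last deduction: upgrading ``every pair of corners is unmergeable'' to the \emph{exact} antiparallelism $u_m^-=-u_m^+$. Being $O(\delta)$-close to an orthant at every vertex is far too weak to force a parallelotope — frusta and other intersections of $n$ near-slabs share that feature, and, crucially, they \emph{do} admit a direction illuminating two corners at once — so one must use the unmergeability hypotheses in a quantitatively tight way, tracking how small $\delta(n)$ must be taken, in order to squeeze approximate antiparallelism into exact antiparallelism. Making this rigorous, while simultaneously verifying that the list of ways to fail cube-likeness is exhaustive and that each failure genuinely yields a saved direction together with the boundary-point bookkeeping, is where essentially all of the work lies; the affine reduction, the antipodal baseline, and the handling of boundary points near lower-dimensional faces of the cube are routine by comparison.
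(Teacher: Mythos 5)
Your overall dichotomy (a ``rigidity'' statement forcing a parallelotope versus a ``slack'' statement producing a saved direction) matches the spirit of the paper's argument: the paper's Claims~\ref{claim1cLemma7}--\ref{claim2lemma7} play the role of your rigidity assertion, and Lemmas~\ref{l: pseudo=cubic}--\ref{l: pseudo not cubic} play the role of your slack assertion. The dangerous gap is in the slack step, and it is exactly the trap the authors flag in the paragraph after~\eqref{eq:first approach}. You propose to illuminate $B$ with one ``merged'' direction plus the remaining $2^n-2$ antipodal sign vectors (possibly asking the merged direction to also swallow a neighbouring corner). This is the naive scheme $(\{-1,1\}^n\setminus\{-v,-v'\})\cup\{\specdir\}$, and it genuinely fails: the octagon $P$ with vertices $\pm(1\pm\varepsilon,1\mp\varepsilon),\pm(-1\pm\varepsilon,1\pm\varepsilon)$, and its obvious higher-dimensional analogues, have two vertices hugging each cube corner whose normal cones fan out in \emph{opposite} directions along the incident cube edges, so after deleting two antipodes there is always a nearby vertex of $B$ that neither the merged direction nor any surviving $-\xi$ can see. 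The fix in the paper is structural, not cosmetic: every direction $-w$ with $w$ adjacent to the distinguished pair $\{v,v'\}$ is \emph{pulled towards} $-v,-v'$ by shrinking one coordinate to $\theta$ (the second bullet in the definition of $\Class_n(\varepsilon,\theta)$), and the whole argument is calibrated so that $\varepsilon\ll\theta$. Without this repositioning your slack step has no mechanism to rescue the boundary near the two dropped corners, and no ``finite case analysis of cube-likeness failures'' will recover it, because the failure is not at the merged corner but at the two adjacent ones.

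There is a second structural difference worth noting, because it is what makes the first one manageable. The paper does not try to illuminate $\partial B$ directly by a single cleverly chosen set. It first isolates a finite set $\pseudov$ of $2^n$ \emph{pseudo-vertices} (boundary points near cube corners of locally minimal solid angle), shows via Lemmas~\ref{l: pseudov} and~\ref{l: solid} that any point of $\partial B$ \emph{not} illuminated by a set from $\Class_n(\varepsilon,\theta)$ must be one of these pseudo-vertices, and then only needs Lemmas~\ref{l: pseudo=cubic}--\ref{l: pseudo not cubic} to produce a set $S\in\Class_n(\varepsilon,\theta)$ hitting those $2^n$ special points. This two-stage reduction is what turns the infinitary problem (illuminate all of $\partial B$) into a finite one, and it is also what makes the $\theta$-repositioning verifiable: Claim~\ref{cl1} checks that the repositioned directions handle all of $\partial B$ outside two small cubes around $v,v'$, and the solid-angle machinery handles the rest. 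Your sketch tries to do the boundary bookkeeping ``by hand'' for all of $\partial B$ simultaneously, which is precisely the part you call ``routine by comparison'' and which in fact is where the repositioning cannot be avoided. Your instinct that the rigidity lemma is delicate is right, and your sketch of it (unmergeability forcing $u_m^-=-u_m^+$) is in the same family as Claim~\ref{claim2lemma7}; but you have the hard part backwards: the paper's rigidity claim is a clean convexity lemma about parallel supporting hyperplanes, whereas constructing an illuminating family of size $2^n-1$ that actually covers all of $\partial B$ is where the real invention (the class $\Class_n(\varepsilon,\theta)$ with its $\theta$-shift and the solid-angle reduction) lies.
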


It is well known that the illumination number of $B$ can be equivalently defined as the least number
of translates of the interior of $B$ needed to cover $B$ (see, in particular \cite{BMS}, \cite[Chapter~3]{B10}, \cite{BK}).
In this sense, it becomes a simple observation that any convex body with a sufficiently
small Banach--Mazur distance to the cube can be illuminated in at most $2^n$ directions
(in fact, more general statements are known; see \cite[Section~4]{BK} and references therein).
The non-triviality of the above theorem consists in proving that {\it strictly less}
than $2^n$ light sources suffice.

The construction of an illuminating set for $B$ involves a careful study of
its geometry. Naturally, we consider the canonical illuminating set of the cube --- the set of all sign vectors ---
as the starting point. Next, we determine which pair of adjacent illuminating directions can be ``glued together''
to form a single light source. The procedure is completed by
repositioning several of the ``canonical'' light sources in a special way.
Thus, we show that for any body $B$ close to the cube in the Banach--Mazur metric
but not isometric to the cube,
we can find a {\it distinguished} pair of boundary points of $B$ illuminated by the same light source,
and complete the illumination of the entire boundary of $B$ by adding $2^n-2$ light sources in ``standard''
or ``almost standard'' positions. Interestingly, existence of this distinguished pair
is a feature of all convex sets sufficiently close to the cube (but not cube itself).
We will give a complete description of this strategy later.

The dependency of the quantity $\delta=\delta(n)$ of the main theorem on the dimension $n$ is not explicit
as we use continuity arguments to establish certain properties of convex sets close to the cube.
In principal, all the arguments can be reworked to give $\delta$ as an explicit function of $n$,
but we prefer to avoid this for the sake of clarity.

\begin{rem}
We would like to point it out that our estimate is sharp in a sense that
for every $n\geq 2$ and for every $\varepsilon\in(0,1)$ there exists a convex body $B$ in $\R^n$
with $\dist(B,B_{\infty}^n)\leq 1+\varepsilon$ and such that the illumination number of $B$ is exactly $2^n-1$.
We consider the following construction.

Let $v:=(1,1,...,1)$, $v':=(-1,1,1,...,1)$ and, for $\varepsilon\in(0,1)$, let 
$\tilde{v}:=(1-\varepsilon,1,1,...,1)$.
Set
$$B:=\conv\{\{-1,1\}^n\setminus v, \tilde{v}\}.$$ 
Note that $\dist(B, B_{\infty}^n)\leq \frac{1}{1-\varepsilon}$. 

For each vertex $w$ of $B$, consider the set of vectors which illuminate $w$ as a boundary point of $B$:
$$\eta(B,w):=\big\{x\in \R^n: \exists \delta>0 \,s.t.\, w+\delta x\mbox{ is in the interior of }B\big\}.$$
Note that for all the vertices $w$ of $B$ which have at least two negative coordinates,
as well as for $w=v'$, one has
$\eta(B,w)=\{x\in\R^n: \sign(x_i)=-\sign(w_i)\mbox{ for all }i\leq n\}$.
Moreover, for all vertices $w$ of $B$ with exactly one negative coordinate one has
$\eta(B,w)\subset\{x\in\R^n: \sign(x_i)=-\sign(w_i)\mbox{ for all }i\leq n\}$.
This implies that the illuminating sets for $\{-1,1\}^n\setminus v$ are pairwise disjoint,
whence $\Ill(B)\geq 2^n-1$.
% :) We hope that this example will illuminate the nature of the proof of Theorem \ref{t: main} to the reader.
\end{rem}

\textbf{Aknowledgement.} The first author is supported by an AMS-Simons travel Grant.
The second author is partially supported by the Simons Foundation.
The work was partially supported by the National Science Foundation under Grant No. DMS-1440140
and the Viterbi postdoctoral fellowship
while the authors were in residence at the Mathematical Sciences Research Institute in Berkeley,
California, during the Fall 2017 semester.

\section{Notation and preliminaries}

Given a positive integer number $n$, $[n]$ is the set $\{1,2,\dots,n\}$.
We denote by $e_1,e_2,\dots,e_n$ the standard basis in $\R^n$ and by $\langle\cdot,\cdot\rangle$ ---
the canonical inner product in $\R^n$.
Given any $1\leq p\leq\infty$, let $\|\cdot\|_p$ be the $\ell_p^n$--norm in $\R^n$, i.e.\
$$\big\|(x_1,x_2,\dots,x_n)\big\|_p:=\Big(\sum_{i=1}^n |x_i|^p\Big)^{1/p}\;\;\mbox{ and }\;\;
\big\|(x_1,x_2,\dots,x_n)\big\|_\infty:=\max\limits_{i\leq n}|x_i|.$$
The unit ball of the $\ell_p^n$--norm is denoted by $B_p^n$.
By $\Id$ we denote the identity operator in $\R^n$ (the dimension $n$ will always be clear from the context).
Further, given a linear operator $T:\R^n\to\R^n$, let $\|T\|:=\|T\|_{2\to 2}$ stand for the spectral norm of $T$
(i.e.\ its largest singular value), and, more generally, for any two numbers $1\leq p,q\leq\infty$, let $\|T\|_{p\to q}$
be the operator norm of $T$ considered as a mapping from $\ell_p^n$ to $\ell_q^n$. Thus,
$$\|T\|_{p\to q}:=\sup\limits_{\|x\|_p=1}\|Tx\|_q.$$
In view of standard comparison inequalities for $\ell_p^n$--norms, we have
\begin{equation}\label{eq:operator norm comparison}
\|T\|_{p\to q}\leq n^2\|T\|_{p'\to q'}\quad\quad\mbox{for all }\;1\leq p,q,p',q'\leq \infty.
\end{equation}
Given a cone $K\subset\R^n$ with the vertex at the origin, we define {\it the solid angle} $\solid(K)$ as
$$\solid(K):=\frac{\Vol_n(K\cap B_2^n)}{\Vol_n(B_2^n)}.$$
Let $B$ be a convex body in $\R^n$. Recall that the Gauss map $\nu_B:\partial B\rightarrow \Sph^{n-1}$
maps each point $x\in \partial B$ to the collection of outer unit normals to supporting hyperplanes at $x$.
{\it The Gauss image} $\Gauss(B,x)$ of $x\in\partial B$ is the convex cone given by $\{\lambda y:\;y\in\nu_B(x),\;\lambda\geq 0\}$.
Further, each point $x\in\partial B$ can be associated with another convex cone,
{\it the illuminating set} $\IllumSet(B,x)$ which comprises all non-zero directions illuminating $x$.
Note that $\Gauss(B,x)$ and the closure of $\IllumSet(B,x)$ are {\it polar} cones. Given a point $x\in\partial B$,
denote by $\solid(B,x)$ the solid angle of the cone $\IllumSet(B,x)$.

\medskip

The next simple lemma will be useful; we give a proof for reader's convenience.
\begin{lemma}\label{l: perturbation of cube}
For any $n>1$ and $\beta>0$ there is $r_{\text{\tiny\ref{l: perturbation of cube}}}=r_{\text{\tiny\ref{l: perturbation of cube}}}(n,\beta)>0$
with the following property:
Let $P$ be a non-degenerate parallelotope in $\R^n$ such that for each vertex $v$ of the standard cube $B_\infty^n$
there is a vertex $v'$ of $P$ satisfying $v'-v\in r_{\text{\tiny\ref{l: perturbation of cube}}} B_\infty^n$.
Then there is an invertible linear operator $T$ in $\R^n$ and a vector $y$ such that
$B_\infty^n=T(P)+y$ and $\|T-\Id\|,\|T^{-1}-\Id\|\leq\beta$.
\end{lemma}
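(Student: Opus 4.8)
The plan is to reduce the statement to a quantitative version of the obvious fact that a parallelotope is determined by $n+1$ of its vertices (one ``base'' vertex together with its $n$ neighbors), together with continuity of the map sending these $n+1$ points to the affine map that carries the standard cube onto the parallelotope.

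First I would fix notation. Label the vertices of $B_\infty^n$ by sign vectors; in particular let $a^0=(-1,-1,\dots,-1)$ and let $a^1,\dots,a^n$ be the neighbors of $a^0$, i.e.\ $a^i=a^0+2e_i$. For a non-degenerate parallelotope $P$, choose the vertices $v^0,v^1,\dots,v^n$ of $P$ provided by the hypothesis as the points closest to $a^0,a^1,\dots,a^n$ respectively, so that $v^i-a^i\in r B_\infty^n$ for $i=0,1,\dots,n$, where $r=r_{\text{\tiny\ref{l: perturbation of cube}}}$ is to be chosen. Since $P$ is a parallelotope and $v^1,\dots,v^n$ are among the neighbors of $v^0$ (this needs a short justification: for $r$ small enough the only vertex of $P$ within $rB_\infty^n$ of $a^i$ is a neighbor of $v^0$, because the corresponding edge vectors are close to $2e_i$), the vectors $f^i:=v^i-v^0$, $i=1,\dots,n$, are exactly the $n$ edge vectors emanating from $v^0$, and $P=v^0+\{\sum_{i=1}^n t_i f^i:\ t_i\in[0,2]\}$ (the factor $2$ because the cube has edge length $2$). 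Hence if we let $S$ be the linear operator with $S e_i=\tfrac12 f^i$ and set $z:=v^0$, then $P=S(B_\infty^n)+ (z+S\mathbf 1)$, where $\mathbf 1=(1,\dots,1)$; equivalently $B_\infty^n=T(P)+y$ with $T:=S^{-1}$ and $y:=-S^{-1}z-\mathbf 1$.

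Next I would estimate $\|S-\Id\|$. By construction the $i$-th column of $S$ is $\tfrac12 f^i=\tfrac12(v^i-v^0)$, while the $i$-th column of $\Id$ is $e_i=\tfrac12(a^i-a^0)$; therefore the $i$-th column of $S-\Id$ is $\tfrac12\big((v^i-a^i)-(v^0-a^0)\big)$, which lies in $rB_\infty^n$, so every entry of $S-\Id$ is at most $r$ in absolute value and hence $\|S-\Id\|_{\infty\to\infty}\le nr$ and $\|S-\Id\|=\|S-\Id\|_{2\to2}\le nr$ as well (using $\|M\|_{2\to2}\le n\max_{ij}|M_{ij}|$, or (\ref{eq:operator norm comparison})). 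Given $\beta>0$, choose $r$ small enough that $nr\le \beta/2<1/2$; then $S$ is invertible with $\|S^{-1}\|\le (1-nr)^{-1}\le 2$ (Neumann series), and $\|T-\Id\|=\|S^{-1}-\Id\|=\|S^{-1}(\Id-S)\|\le \|S^{-1}\|\,\|S-\Id\|\le 2nr\le\beta$, while $\|T^{-1}-\Id\|=\|S-\Id\|\le nr\le\beta$. This fixes the required $r_{\text{\tiny\ref{l: perturbation of cube}}}(n,\beta)$, e.g.\ any $r\le \beta/(2n)$ also small enough for the combinatorial step above.

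The main obstacle, and the only place where care is genuinely needed, is the combinatorial step: arguing that for $r$ sufficiently small the chosen vertices $v^1,\dots,v^n$ of $P$ really are the neighbors of $v^0$ in $P$, and that $P$ has no ``extra'' vertices near the cube's vertices that could spoil the identification. One clean way is to note that $P$ is contained in the convex hull of the $2^n$ vertices provided by the hypothesis, all within $rB_\infty^n$ of the cube's vertices, so $P\subset B_\infty^n+rB_\infty^n=(1+r)B_\infty^n$; combined with a matching lower inclusion (each facet of $P$ is near a facet of the cube) one gets $(1-Cr)B_\infty^n\subset P\subset(1+Cr)B_\infty^n$ after translation, from which it follows that $P$ has exactly $2^n$ vertices, each within $C'r$ of a distinct cube vertex, and the edge structure matches that of the cube; then the edge vectors from $v^0$ are forced to be close to $2e_1,\dots,2e_n$ and in particular linearly independent, so $S$ is invertible even before the norm estimate. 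Everything else is the routine linear algebra sketched above.
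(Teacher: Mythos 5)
Your argument is correct and, at the structural level, is the same as the paper's: pick the $P$-vertex near a chosen cube vertex together with the $n$ $P$-vertices near its cube-neighbors, use these $n+1$ points to determine the affine map $x \mapsto Tx + y$ carrying $P$ onto $B_\infty^n$, and then estimate $\|T - \Id\|$ and $\|T^{-1} - \Id\|$. The proofs diverge only in how the operator norm is controlled. The paper first establishes $(1-r)B_\infty^n \subset P$ via a separating-hyperplane argument, deduces $\|T\|_{\infty\to\infty} \le (1-r)^{-1}$, bounds $\|T^{-1}(y)\|_\infty \le r$ by averaging $T^{-1}$ over all cube vertices, and then combines these to bound $\|T-\Id\|_{\infty\to\infty}$ and $\|T^{-1}-\Id\|_{\infty\to\infty}$. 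You instead observe that each column of $S - \Id$ (with $S = T^{-1}$) equals $\tfrac12\big((v^i - a^i) - (v^0 - a^0)\big)$ and hence lies in $rB_\infty^n$, bound $\|S - \Id\|$ entrywise, and pass to the inverse by a Neumann series; this is slightly more direct and bypasses the averaging trick. Both your write-up and the paper leave the combinatorial identification step --- that the $P$-vertex near $a^0 + 2e_i$ is genuinely a neighbor of the $P$-vertex near $a^0$ in the face lattice of $P$, so that the $n$ difference vectors are indeed the edge vectors --- largely implicit: the paper simply asserts ``Note that necessarily $T(P)+y = B_\infty^n$,'' whereas you correctly flag this as the crux and outline a sandwich-and-facet-matching argument, which is the right route even though you only sketch it. One small slip: with $f^i = v^i - v^0$ the full edge vectors, the parametrization should be $P = v^0 + \{\sum t_i f^i : t_i \in [0,1]\}$ rather than $t_i \in [0,2]$; your subsequent identity $P = S(B_\infty^n) + (z + S(1,\dots,1))$ is nonetheless correct.
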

\begin{proof}
Note that, as we allow $r_{\text{\tiny\ref{l: perturbation of cube}}}$ to depend on $n$ and in view of \eqref{eq:operator norm comparison},
it is sufficient to prove the statement with the spectral norm replaced by $\|\cdot\|_{\infty\to\infty}$.
Fix a small $\beta>0$ and define $r=r_{\text{\tiny\ref{l: perturbation of cube}}}:=\frac{\beta}{4}$.
Let $P$ be a parallelotope in $\R^n$ satisfying conditions of the lemma.
First, observe that $(1-r)B_\infty^n\subset P$. Indeed, otherwise there would exist a vertex $v$ of $(1-r)B_\infty^n$
and an affine hyperplane $H$ passing through $v$ and not intersecting $P$. On the other hand,
one could always find a pair of opposite vertices of $B_\infty^n$ lying in different half-spaces (determined by $H$)
and such that the {\it $\ell_\infty^n$--distance} of either vertex to $H$ is strictly greater than $r$. This would contradict the assumption
that every vertex of the cube can be $r$-approximated by a point in $P$ in the $\ell_\infty^n$--metric.

For each $v\in\{-1,1\}^n$
let $f(v)$ be the (unique) vertex of $P$ satisfying $\|v-f(v)\|_\infty\leq r$.
Let $S$ be the set of all vertices of $B_\infty^n$ adjacent to $(1,1,\dots,1)$.
Note that $n$ vectors $\{f(1,1,\dots,1)-f(v)\}_{v\in S}$ are linearly independent whence there is a unique linear operator
$T$ and a vector $y\in\R^n$ such that $(1,1,\dots,1)=T(f(1,1,\dots,1))+y$ and $v=T(f(v))+y$
($v\in S$). Note that necessarily $T(P)+y=B_\infty^n$, and, moreover, $f$ is the restriction of $T^{-1}(\cdot)-T^{-1}(y)$
to $\{-1,1\}^n$. Together with the inclusion $(1-r)B_\infty^n\subset P$,
this gives $(1-r)T(B_\infty^n)\subset B_\infty^n-y$, whence, by the symmetry of $T(B_\infty^n)$,
$(1-r)T(B_\infty^n)\subset B_\infty^n$, and $\|T\|_{\infty\to\infty}\leq (1-r)^{-1}$.

By linearity of $T$, we have
$$0=2^{-n}\sum_{v\in\{-1,1\}^n} T^{-1} (v)=T^{-1}(y)+2^{-n}\sum_{v\in\{-1,1\}^n}f(v),$$ 
whence
\begin{align*}
\|T^{-1}(y)\|_{\infty}&=2^{-n}\Big\|\sum_{v\in \{-1,1\}^n}f(v)\Big\|_{\infty}
=2^{-n}\Big\|\sum_{v\in \{-1,1\}^n}(v-f(v))\Big\|_{\infty}\\
&\leq 2^{-n}\sum_{v\in\{-1,1\}^n} \|v-f(v)\|_{\infty}
\leq r.
\end{align*}
Next, elementary convexity properties and the bound $\|T\|_{\infty\to\infty}\leq (1-r)^{-1}<2$ imply
$$
\|T-\Id\|_{\infty\to\infty}
=\max_{v\in\{-1,1\}^n}\|Tv-v\|_{\infty}
< 
2\max_{v\in\{-1,1\}^n}\|v-f(v)-T^{-1}(y)\|_{\infty}
\leq 4r=\beta.
$$
Finally,
$$\|T^{-1}-\Id\|_{\infty\to\infty}=\max_{v\in\{-1,1\}^n}\|T^{-1} v-v\|_{\infty}
=
\max_{v\in\{-1,1\}^n}\|f(v)-v+T^{-1}(y)\|_{\infty}\leq 2r\leq \beta.$$
\end{proof}

\section{High-level structure of the proof}

In this section, we give the proof of the main theorem assuming several properties
of convex bodies close to the cube (they are stated as lemmas). The proofs of the lemmas
which constitute the technical part of the paper, are deferred to the next section.
In the proof of the theorem, we work with quantities depending on various parameters or other functions.
For example, by writing $\beta=\beta(n,\alpha)$ we introduce $\beta$ as a function of two variables $n$ and $\alpha$.
To make referencing easier, each function introduced within a lemma is written with the number of that lemma as a subscript.

\bigskip

The invariance of the illumination number under affine transformations allows us to restrict our analysis
to the class of convex bodies $B\subset\R^n$ such that
\begin{equation}\label{eq: star position}\tag{$\star$}
B_\infty^n\subset B\subset \dist(B,B_\infty^n)B_\infty^n+y
\end{equation}
for some vector $y=y(B)\in\R^n$.
We say that a body $B$ satisfying \eqref{eq: star position} is in {\it a $\star$-position}.
Note that the $\star$-position is not uniquely defined in general.
It is obvious that any convex body $B$ in a $\star$-position satisfies
\begin{equation}\label{eq: cube incl ctr}
B\subset (2\dist(B,B_\infty^n)-1)B_\infty^n.
\end{equation}

\medskip

Now, assume we have a convex body $B$ in a $\star$-position,
with a very small Banach--Mazur distance to the cube (but not the cube itself).
How could we construct an illuminating set for $B$ of cardinality $2^n-1$?
It is natural to start with the standard illuminating set for the cube, i.e.\ the set $\{-1,1\}^n$, transform it in some way and remove one direction.
One may note that simply excluding one
direction from $\{-1,1\}^n$, without changing the remaining directions is not sufficient.
Indeed, consider a convex polygon $P$ in $\R^2$ with eight
vertices $\pm (1+\varepsilon,1-\varepsilon),\pm (1-\varepsilon,1+\varepsilon),
\pm (-1+\varepsilon,1+\varepsilon),\pm (-1-\varepsilon,1-\varepsilon)$, where $\varepsilon>0$
is small enough. It can be checked that $P$ is in a $\star$-position,
and that any proper subset of $\{-1,1\}^2$ does not illuminate $P$.

Another natural approach is to replace a pair of adjacent illumination directions
with a single vector. That is, given a convex body $B$ in $\R^n$ in a $\star$-position and
with a small distance to cube, construct an illumination set of the form
\begin{equation}\label{eq:first approach}
(\{-1,1\}^n\setminus\{-v,-v'\})\,\cup\,\{\specdir\},
\end{equation}
where $\{-v,-v'\}$ is some pair of adjacent directions from $\{-1,1\}^n$ and $\specdir$ is some non-zero vector in $\R^n$.
It can be checked directly that this method fails for the polygon $P$ constructed above, but,
by slightly repositioning the canonical illumination directions, it is possible to get an illuminating set for $P$
(say, take the set $\{(1,-1.1),(1,1.1)\}\,\cup\{\specdir\}$ with $\specdir=(-1,0)$).
It turns out that this approach can be generalized to convex bodies in higher dimensions;
thus, our construction in Theorem~\ref{t: main} resembles \eqref{eq:first approach}
but is somewhat more technical.
Let us give a formal definition.

\medskip

For any $n\geq 3$ and any choice of parameters $\varepsilon,\theta\in(0,1)$, let us denote by
$\Class_n(\varepsilon,\theta)$
the collection of all subsets $S\subset\R^n$ of cardinality $2^n-1$ such that
there is a pair $v=v(S),v'=v'(S)$ of adjacent vertices of the standard cube $B_\infty^n$ satisfying
the following three conditions:
\begin{itemize}

\item For any vertex $w\in\{-1,1\}^n$ adjacent neither to $v$ nor to $v'$,
$S$ contains a vector in the set $-w+\varepsilon B_\infty^n$.

\item For any vertex $w=(w_1,w_2,\dots,w_n)\in\{-1,1\}^n\setminus\{v,v'\}$ {\it adjacent} either to $v$ or $v'$,
$S$ contains a vector in
$$-\sum_{j:j\neq i}w_j e_j-\theta w_ie_i+\varepsilon B_\infty^n,$$
where $i=i(w)$ is the unique index in $[n]$ such that
$v_i=v_i'\neq w_i$.

\item $S$ contains a vector $\specdir=(\specdir_1,\specdir_2,\dots,\specdir_n)$ in the parallelepiped
$$-\prod_{j=1}^n \bigg(\frac{v_j+v_j'}{2}\cdot [\theta,1]\bigg)+\varepsilon B_\infty^n.$$
Note here that if $\ell\in[n]$ is the unique index with $v_\ell=-v_\ell'$, we get $|\specdir_\ell|\leq\varepsilon$.

\end{itemize}

We will call $\specdir$ {\it the distinguished direction} of the set $S$,
and $v$, $v'$ the {\it distinguished vertices} of the cube w.r.t.\ set $S$.
Roughly speaking, each set in $\Class_n(\varepsilon,\theta)$ is constructed by taking
a standard illuminating set $\{-1,1\}^n$ of $B_\infty^n$, glueing together a pair of adjacent illuminating
directions and then perturbing the collection in a special way.
The principal difference of the above construction from \eqref{eq:first approach}
is that we reposition {\it all} illuminating directions adjacent to either $-v$ or $-v'$ by moving them ``closer''
to $-v$, $-v'$ (the reader may wish to compare this strategy to our illumination of the polytope $P$ in the above example).
The rest of the illuminating directions, disregarding a small perturbation, remain unchanged
(i.e.\ are essentially sign vectors).

Observe that for any $0<\varepsilon<\varepsilon'$ and $\theta>0$ we have
$\Class_n(\varepsilon,\theta)\subset \Class_n(\varepsilon',\theta)$.
The definition of the class $\Class_n(\varepsilon,\theta)$, being somewhat technical,
is designed to be ``stable'' with respect to linear transformations close to the identity:
\begin{lemma}\label{l: stability}
For any $n>2$, $\varepsilon,\theta\in(0,1)$ and $\alpha>0$
there is $\beta_{\text{\tiny\ref{l: stability}}}=\beta_{\text{\tiny\ref{l: stability}}}(n,\alpha)>0$ with the following property:
whenever $S\in\Class_n(\varepsilon,\theta)$ and $T$ is a linear operator in $\R^n$ satisfying $\|T-\Id\|\leq \beta_{\text{\tiny\ref{l: stability}}}$,
we have $T(S)\in \Class_n(\varepsilon+\alpha,\theta)$.
\end{lemma}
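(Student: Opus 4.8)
The plan is to verify that $T(S)$ belongs to $\Class_n(\varepsilon+\alpha,\theta)$ with the \emph{same} distinguished pair of vertices $v=v(S)$, $v'=v'(S)$ and with distinguished direction $T\specdir$, by checking that each of the three bullet conditions defining the class is stable under $T$. The first step is to trade the spectral-norm hypothesis for a bound on the $\ell_\infty^n\to\ell_\infty^n$ operator norm: by \eqref{eq:operator norm comparison} we have $\|T-\Id\|_{\infty\to\infty}\le n^2\|T-\Id\|\le n^2\beta_{\text{\tiny\ref{l: stability}}}$, so it suffices to put $\beta_{\text{\tiny\ref{l: stability}}}(n,\alpha):=\min\{\tfrac12,\tfrac{\alpha}{2n^2}\}$. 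This choice depends only on $n$ and $\alpha$, as required, and the bound $\|T-\Id\|\le\tfrac12<1$ forces $T$ to be invertible (Neumann series), whence $|T(S)|=|S|=2^n-1$.

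The heart of the matter is the following uniform observation. Every ``base point'' occurring in the definition of $\Class_n(\varepsilon,\theta)$ --- namely each vertex $-w$ in the first bullet, each point $-\sum_{j\ne i}w_je_j-\theta w_ie_i$ in the second bullet, and \emph{every} point of the parallelepiped $-\prod_{j=1}^n\big(\tfrac{v_j+v_j'}{2}[\theta,1]\big)$ in the third bullet --- belongs to $B_\infty^n$, since $\theta\in(0,1)$. Hence, if $s\in S$ is a witness for one of these conditions, then $s$ lies within $\ell_\infty^n$-distance $\varepsilon<1$ of a point of $B_\infty^n$, so $\|s\|_\infty\le 1+\varepsilon<2$; consequently $\|Ts-s\|_\infty\le\|T-\Id\|_{\infty\to\infty}\|s\|_\infty<2\cdot\tfrac{\alpha}{2}=\alpha$. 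Each of the three conditions requires a witness in a set of the form $A+\varepsilon B_\infty^n$, where the set $A$ (a vertex, a point, or the parallelepiped above) depends neither on $S$ nor on $T$; therefore $Ts\in s+\alpha B_\infty^n\subset A+\varepsilon B_\infty^n+\alpha B_\infty^n=A+(\varepsilon+\alpha)B_\infty^n$, so $Ts\in T(S)$ is a witness for the corresponding condition defining $\Class_n(\varepsilon+\alpha,\theta)$.

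With these two ingredients the proof is essentially complete: for each of the three bullets one takes a witness $s\in S$ for the corresponding condition of $\Class_n(\varepsilon,\theta)$ and checks, via the displayed inequality, that $Ts\in T(S)$ witnesses the same condition with $\varepsilon$ replaced by $\varepsilon+\alpha$; together with $|T(S)|=2^n-1$ this yields $T(S)\in\Class_n(\varepsilon+\alpha,\theta)$. I do not expect a genuine obstacle here. The only point requiring care is that a direct use of the spectral norm would cost a factor $\sqrt n$ in passing between $\|\cdot\|_2$ and $\|\cdot\|_\infty$, which is fatal when $\varepsilon$ is close to $1$ and $\alpha$ is small --- precisely the regime in which $\beta_{\text{\tiny\ref{l: stability}}}$ must still be allowed to depend only on $n$ and $\alpha$. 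Routing the estimate through the $\ell_\infty^n\to\ell_\infty^n$ operator norm via \eqref{eq:operator norm comparison} removes this difficulty at the cost of a harmless dimensional factor, and the uniform bound $\|s\|_\infty<2$ for all witnesses --- valid for every $\varepsilon,\theta\in(0,1)$ --- is exactly what keeps the final estimate independent of $\varepsilon$ and $\theta$.
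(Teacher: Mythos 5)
Your proof is correct and essentially identical to the paper's: both switch from the spectral norm to $\|\cdot\|_{\infty\to\infty}$ (the paper does this implicitly, citing that $\beta$ may depend on $n$; you make it explicit via \eqref{eq:operator norm comparison}), and both rest on the observation that every witness $s$ satisfies $\|s\|_\infty\le 1+\varepsilon<2$, so $\|Ts-s\|_\infty\le\|T-\Id\|_{\infty\to\infty}\|s\|_\infty<\alpha$. Your added remark that invertibility of $T$ preserves the cardinality $|T(S)|=2^n-1$ is a small but legitimate point that the paper leaves tacit.
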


Our goal is to show that if $B\subset\R^n$ is a convex body in a $\star$-position which is very close to the cube, yet distinct from it, then $B$ can be illuminated by a collection
$S\in\Class_n(\varepsilon,\theta)$ for some appropriately chosen parameters $\varepsilon,\theta$.
Both parameters will be taken sufficiently small, but, importantly,
$\varepsilon$ shall be much smaller than $\theta$.

\bigskip

A crucial notion that will help us to study illumination by directions from $\Class_n(\varepsilon,\theta)$
is that of {\it pseudo-vertices}.
Let $r\in(0,1)$ be a parameter and let $B$ be a body in
$\R^n$ in a $\star$-position with $\dist(B,B_\infty^n)\leq 1+\frac{r}{2}$.
Note that, in view of \eqref{eq: cube incl ctr}, for any vertex $v$ of the standard cube $B_\infty^n$,
the set $r B_\infty^n+v$ intersects with the boundary of $B$.
Given any vertex $v$ of $B_\infty^n$, we say that a point $p\in (r B_\infty^n+v)\cap \partial B$
is {\it an $(r)$-pseudo-vertex of $B$} if $p$ has minimal solid angle $\solid(B,p)$
among all points in $(r B_\infty^n+v)\cap \partial B$.
For all admissible $r$, the set $B$ has {\it at least} $2^n$ $(r)$-pseudo-vertices, but may have
more (in fact, uncountably many if the boundary of $B$ is smooth).
Further, we say that a collection of $2^n$ points $\pseudov\subset\R^n$ is a {\it proper set of $(r)$-pseudo-vertices} of $B$ if
for any vertex $v$ of $B_\infty^n$, $\pseudov$ contains exactly one $(r)$-pseudo-vertex $p(v)\in (r B_\infty^n+v)\cap \partial B$.
In particular, if $B=B_\infty^n$ then the proper set of $(r)$-pseudo-vertices (for any admissible value of $r$)
coincides with the set of the regular vertices of the cube.
In general, $\pseudov$ may be not uniquely defined.

\medskip

It is easy to see that any point $x$ on the boundary of the standard cube such that
$\solid(B_\infty^n,x)<2^{-n+1}$, must be one of its vertices.
Below we state a weaker relative of this property
for convex bodies sufficiently close to the cube and their pseudo-vertices.

\begin{lemma}\label{l: pseudov}
For each $n>2$ there are $\eta_{\text{\tiny\ref{l: pseudov}}}=\eta_{\text{\tiny\ref{l: pseudov}}}(n)\in(0,1)$,
$\kappa_{\text{\tiny\ref{l: pseudov}}}=\kappa_{\text{\tiny\ref{l: pseudov}}}(n)>0$ with the following property.
Let $0<\eta\leq \eta_{\text{\tiny\ref{l: pseudov}}}$.
Then there is $\delta_{\text{\tiny\ref{l: pseudov}}}=\delta_{\text{\tiny\ref{l: pseudov}}}(n,\eta)\in(0,\eta/2)$
such that for any convex body $B$ in $\R^n$ in the $\star$-position, with $\dist(B,B_{\infty}^n)\leq 1+\delta_{\text{\tiny\ref{l: pseudov}}}$,
and for any point $x\in\partial B$ with $\solid(B,x)\leq (1+\kappa_{\text{\tiny\ref{l: pseudov}}})\cdot 2^{-n}$,
there is a vertex $v$ of the standard cube $B_\infty^n$ such that
$x$ is the unique $(\eta)$-pseudo-vertex of $B$
in $(\eta B_\infty^n+v)\cap \partial B$.
\end{lemma}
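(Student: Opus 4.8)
The plan is to split the assertion into a \emph{macroscopic} part --- that every $x\in\partial B$ with $\solid(B,x)\le(1+\kappa_{\text{\tiny\ref{l: pseudov}}})2^{-n}$ lies close to a vertex of $B_\infty^n$ --- and a \emph{microscopic} part --- that near each vertex of $B_\infty^n$ there is at most one such $x$ and that it is the pseudo-vertex. The macroscopic part rests on lower semicontinuity of the solid angle under Hausdorff convergence from inside: if bodies $B_k$ in the $\star$-position satisfy $\dist(B_k,B_\infty^n)\to 1$, so that $B_\infty^n\subseteq B_k$ and, by \eqref{eq: cube incl ctr}, $B_k\to B_\infty^n$ in the Hausdorff metric, and $x_k\in\partial B_k$ converge to $x_\ast\in\partial B_\infty^n$, then $\liminf_k\solid(B_k,x_k)\ge\solid(B_\infty^n,x_\ast)$. (Each unit outer normal $u_k$ at $x_k$ satisfies $\langle u_k,z-x_k\rangle\le 0$ for every $z\in B_\infty^n\subseteq B_k$, so every cluster point of $(u_k)$ is an outer normal at $x_\ast$; this upper semicontinuity of the Gauss image $\Gauss(B_k,x_k)$ along the sequence dualizes to lower semicontinuity of the solid angle.) On the cube itself, any boundary point which is not a vertex lies in the relative interior of a face of some dimension $k\ge 1$, hence has solid angle $2^{-(n-k)}\ge 2^{1-n}$. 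Since $2^{1-n}>(1+\kappa_{\text{\tiny\ref{l: pseudov}}})2^{-n}$ whenever $\kappa_{\text{\tiny\ref{l: pseudov}}}<1$, a routine compactness argument now supplies, for any such $\kappa_{\text{\tiny\ref{l: pseudov}}}$ and any $\eta\in(0,1)$, a threshold $\delta'(n,\eta)>0$ such that: for $B$ in the $\star$-position with $\dist(B,B_\infty^n)\le1+\delta'$, every $x\in\partial B$ with $\solid(B,x)\le(1+\kappa_{\text{\tiny\ref{l: pseudov}}})2^{-n}$ lies within $\ell_\infty$-distance $\eta$ of a vertex $v$ of $B_\infty^n$, necessarily unique since $\eta<1$.

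For the microscopic part, fix such an $x$ and its vertex $v$; reflecting coordinates we may take $v=(1,\dots,1)$, so $B_\infty^n\subseteq B\subseteq(1+2\delta)B_\infty^n$ with $\delta\le\delta'$ and $x_i\in[1-\eta,1+2\delta]$ for all $i$. The key point is that $x$ is \emph{exposed} by the diagonal direction $u_0:=(1,\dots,1)/\sqrt n$, i.e.\ $F_{u_0}(B):=\{z\in B:\langle u_0,z\rangle=\max_{z'\in B}\langle u_0,z'\rangle\}$ equals $\{x\}$. Granting this, any two points near $v$ satisfying the hypotheses of the lemma are both equal to the single point $F_{u_0}(B)$, hence equal to each other; and since the pseudo-vertex of $B$ in $(\eta B_\infty^n+v)\cap\partial B$ has solid angle at most $\solid(B,x)\le(1+\kappa_{\text{\tiny\ref{l: pseudov}}})2^{-n}$, it too satisfies the hypotheses and therefore coincides with $x$ --- which is exactly what the lemma asserts. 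The first ingredient towards $F_{u_0}(B)=\{x\}$ is a one-sided estimate valid at \emph{every} boundary point near $v$: any unit outer normal $u$ at $x$ satisfies $\langle u,x\rangle\ge\sup_{z\in B_\infty^n}\langle u,z\rangle=\|u\|_1$, and substituting $x_i\in[1-\eta,1+2\delta]$ forces the negative part of $u$ to obey $\sum_i\max(-u_i,0)\le2\delta\sqrt n$. Thus $\Gauss(B,x)$ is contained in a cone $K_\delta$ lying within Hausdorff distance $O(\delta)$ of the positive orthant $\{u:u_i\ge0\ \forall i\}$; dually, $\overline{\IllumSet(B,x)}=\Gauss(B,x)^\circ$ contains the cone $K_\delta^\circ$, which lies within Hausdorff distance $O(\delta)$ of the negative orthant $\{w:w_i\le0\ \forall i\}$ and hence has solid angle $2^{-n}-O(\delta)$.

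The decisive step uses the hypothesis $\solid(B,x)\le(1+\kappa_{\text{\tiny\ref{l: pseudov}}})2^{-n}$: the closed convex cone $\overline{\IllumSet(B,x)}$ contains $K_\delta^\circ$, yet its solid angle $\solid(B,x)$ exceeds $2^{-n}$ by at most $\kappa_{\text{\tiny\ref{l: pseudov}}}2^{-n}+O(\delta)$. A quantitative stability estimate --- if a convex cone contains a cone which is $O(\delta)$-close to the negative orthant together with a unit direction at distance $\ge\rho$ from the negative orthant, then it has solid angle at least $2^{-n}+c_n(\rho)-O(\delta)$ for some $c_n(\rho)>0$ --- then confines $\overline{\IllumSet(B,x)}$ to within Hausdorff distance $h=h(n,\kappa_{\text{\tiny\ref{l: pseudov}}},\delta)$ of the negative orthant, with $h\to0$ as $\kappa_{\text{\tiny\ref{l: pseudov}}},\delta\to0$. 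Taking polars, $\Gauss(B,x)$ lies within Hausdorff distance $h'\to0$ of the positive orthant. Now one \emph{chooses} $\kappa_{\text{\tiny\ref{l: pseudov}}}(n)\in(0,1)$, and then $\delta'(n,\eta)$, small enough that $h'<1/(2\sqrt n)$; since $u_0$ has Euclidean distance $1/\sqrt n$ to the boundary of the positive orthant, $u_0$ then lies in the interior of $\Gauss(B,x)$. A normal cone with nonempty interior is $n$-dimensional, so the smallest face of $B$ containing $x$ is $0$-dimensional; as $u_0$ lies in the interior of the normal cone of that face, $F_{u_0}(B)$ equals that face, i.e.\ $\{x\}$. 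Finally take $\eta_{\text{\tiny\ref{l: pseudov}}}(n):=1/2$ and $\delta_{\text{\tiny\ref{l: pseudov}}}(n,\eta):=\tfrac12\min\{\eta/2,\delta'(n,\eta)\}$.

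The step I expect to be the main obstacle is the quantitative stability estimate of the decisive step --- controlling the gain $c_n(\rho)$ in solid angle forced by enlarging a convex cone beyond an orthant, and through it the rate $h$ and the admissible sizes of $\kappa_{\text{\tiny\ref{l: pseudov}}}$ and $\delta$; the semicontinuity, the compactness, and the one-sided normal estimate are comparatively routine. It is worth noting that $\kappa_{\text{\tiny\ref{l: pseudov}}}$ cannot be taken $\ge\tfrac12$: for $B=\conv\big(B_\infty^n\cup\{(1,\dots,1)+\varepsilon e_1,\,(1,\dots,1)+\varepsilon e_2\}\big)$ with $\varepsilon>0$ small, the two distinct extreme points $(1,\dots,1)+\varepsilon e_1$ and $(1,\dots,1)+\varepsilon e_2$ are the pseudo-vertices near the vertex $(1,\dots,1)$ and both have solid angle $\tfrac32\cdot2^{-n}$ up to $O(\varepsilon)$.
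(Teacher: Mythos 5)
Your decomposition matches the paper's: a macroscopic step (Claim~\ref{claim1}) locating $x$ near a vertex and a microscopic step (Claim~\ref{claim2}) giving uniqueness near each vertex, with the macroscopic part proved by essentially the same lower-semicontinuity/compactness argument. The microscopic step is where you genuinely diverge. The paper's Claim~\ref{claim2} is a \emph{pairwise} argument: for two distinct points $x,y$ near $v$ it translates by $-v$, extends the segment $[x-v,y-v]$ to the unit sphere meeting it at $p_1,p_2$, and uses the identity $\|p-p_1\|_2^2+\|p-p_2\|_2^2=4$ (with $p$ the normalized diagonal) to find a $p_j$ far from $p$; the cone generated by the $e_i$'s and that far point already has solid angle $>(1+\kappa')2^{-n}$. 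You instead \emph{characterize} the minimizer: a small-solid-angle boundary point near $v$ must be the exposed point $F_{u_0}(B)$ of the diagonal $u_0$, which gives uniqueness immediately. Both hinge on the same underlying fact --- that enlarging an orthant-shaped cone by a direction genuinely outside the orthant strictly increases the solid angle --- but the antipodal identity hands the paper its far direction for free, whereas your route requires a quantitative Hausdorff-stability estimate for solid angles (together with continuity of cone polarity under Hausdorff convergence of pointed full-dimensional cones), which you correctly flag as the one step left unproved. Both of those are true and can be established by the same kind of compactness reasoning the paper already uses informally in Claim~\ref{claim2}, so your proposal is sound; it trades the slickness of the antipodal trick for a cleaner conceptual picture, namely that the distinguished pseudo-vertex is the diagonal's exposed point. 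Your closing remark that $\kappa_{\text{\tiny\ref{l: pseudov}}}$ cannot be taken $\ge 1/2$ is consistent with the paper's choice $\kappa'(n)<0.5$ in Claim~\ref{claim2}.
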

Thus, we can detect a pseudo-vertex if its solid angle is less than certain critical value.
The following lemma provides a connection between this property of a boundary point and
the illumination of $B$, and, together with Lemma~\ref{l: pseudo not cubic}, comprises the most technical part of the paper:
\begin{lemma}\label{l: solid}
For any $n>2$, $\kappa>0$ there is $\theta_{\text{\tiny\ref{l: solid}}}(n,\kappa)\in(0,1)$
with the following property. For any $0<\theta\leq \theta_{\text{\tiny\ref{l: solid}}}$ there are
$\delta_{\text{\tiny\ref{l: solid}}}=\delta_{\text{\tiny\ref{l: solid}}}(n,\kappa,\theta)>0$
and $\varepsilon_{\text{\tiny\ref{l: solid}}}=\varepsilon_{\text{\tiny\ref{l: solid}}}(n,\kappa,\theta)\in(0,1)$
such that for any convex body $B$ in a $\star$-position with $\dist(B,B_\infty^n)\leq 1+\delta_{\text{\tiny\ref{l: solid}}}$
and any element $S\in\Class_n(\varepsilon_{\text{\tiny\ref{l: solid}}},\theta)$ we have that
every point $x\in\partial B$ not illuminated by $S$ satisfies
$$\solid(B,x)\leq (1+\kappa)\cdot 2^{-n}.$$
\end{lemma}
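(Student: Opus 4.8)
The plan is to prove the statement in two stages: first, that a point $x\in\partial B$ which is \emph{not} illuminated by $S$ must lie within $\ell_\infty^n$-distance $O(\sqrt\delta)$ of one of the distinguished vertices $v=v(S),v'=v'(S)$ of the cube; and then, that near such a vertex the cone $\Gauss(B,x)$ is forced to be so large that $\IllumSet(B,x)$ is squeezed into a slightly dilated orthant, which pushes the solid angle down to $(1+\kappa)2^{-n}$. The recurring input is a normal-cone estimate coming from the $\star$-position: if $\dist(B,B_\infty^n)\le1+\delta$, $x\in\partial B$ and $g\in\Gauss(B,x)$ is normalized by $\|g\|_1=1$, then by $B_\infty^n\subset B$ and \eqref{eq: cube incl ctr}
$$1=\max_{z\in B_\infty^n}\langle g,z\rangle\le\langle g,x\rangle=\max_{z\in B}\langle g,z\rangle\le2\dist(B,B_\infty^n)-1\le1+2\delta ,$$
which, together with $\langle g,x\rangle\le\|x\|_\infty\|g\|_1$, pins $\|x\|_\infty$ to $[1,1+2\delta]$ and, on setting $A:=\{i:|x_i|>1-\sqrt\delta\}$ and $\sigma_i:=\sign x_i$ for $i\in A$, forces $\sum_{i\notin A}|g_i|\le2\sqrt\delta$ and, for each $i\in A$, either $\sign g_i=\sigma_i$ or $|g_i|\le2\delta$. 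In other words, every outer normal at $x$ is, up to an $O(\sqrt\delta)$ error in $\ell_1$, a non-negative combination of the vectors $\{\sigma_ie_i:i\in A\}$.

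\emph{Stage one.} Assume $x\in\partial B$ is not illuminated by some $S\in\Class_n(\varepsilon,\theta)$. For every sign vector $u\notin\{v,v'\}$ the set $S$ contains a vector $s$: one in $-u+\varepsilon B_\infty^n$ when $u$ is adjacent to neither $v$ nor $v'$, and one in $-\sum_{j\ne i(u)}u_je_j-\theta u_{i(u)}e_{i(u)}+\varepsilon B_\infty^n$ when $u$ is adjacent to $v$ or $v'$, where $i(u)$ is the coordinate with $v_{i(u)}=v'_{i(u)}\ne u_{i(u)}$; in the latter case $s=-u+(1-\theta)u_{i(u)}e_{i(u)}+O(\varepsilon)$. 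If in addition $u|_A=\sigma|_A$, then the estimate above gives $\langle g,-u\rangle\le-1+O(\sqrt\delta)$ for every normalized $g\in\Gauss(B,x)$, hence $\langle g,s\rangle\le-\theta+O(\sqrt\delta+\varepsilon)<0$ once $\delta,\varepsilon$ are small relative to $\theta$; so $s$ would illuminate $x$, a contradiction. Thus every sign vector extending $\sigma|_A$ lies in $\{v,v'\}$, which by elementary counting forces either $A=[n]$ with $\sigma\in\{v,v'\}$, or $A=[n]\setminus\{\ell\}$ with $\sigma|_A=v|_A=v'|_A$, where $\ell$ is the coordinate in which $v$ and $v'$ differ. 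The second case is excluded by testing $\specdir$: here $\ell\notin A$ gives $|g_\ell|\le2\sqrt\delta$, while $\specdir_j\in-v_j[\theta,1]+[-\varepsilon,\varepsilon]$ for $j\ne\ell$, so $\langle g,\specdir\rangle\le-\theta(1-O(\sqrt\delta))+O(\varepsilon)<0$ for all such $g$ and $\specdir$ illuminates $x$. Hence $A=[n]$ and $\sigma\in\{v,v'\}$; since the definition of $\Class_n(\varepsilon,\theta)$ is symmetric under $v\leftrightarrow v'$, we may assume $\sigma=v$, i.e.\ $|x_i|>1-\sqrt\delta$ and $\sign x_i=v_i$ for all $i$.

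\emph{Stage two.} Keeping the assumption that $x$ is not illuminated, I claim that for each coordinate $i$ there is $g^{(i)}\in\Gauss(B,x)$ with $\|g^{(i)}\|_1=1$ and $|g^{(i)}_i|\ge1-O(\theta)$. For $i\ne\ell$ the witness is the vector $s\in S$ attached to the vertex $w$ obtained from $v$ by flipping coordinate $i$ (it is adjacent to $v$, differs from both $v$ and $v'$, and has $i(w)=i$): one has $s=-v+(1+\theta)v_ie_i+O(\varepsilon)$, so using $\langle g,v\rangle\ge1-O(\sqrt\delta)$ and $v_jg_j\ge|g_j|-O(\delta)$ coordinatewise we get $\langle g,s\rangle\le-1+(1+\theta)|g_i|+O(\sqrt\delta+\varepsilon)$ for every normalized $g$, and since $s$ does not illuminate $x$ some normalized $g$ must satisfy $|g_i|\ge\frac1{1+\theta}-O(\sqrt\delta+\varepsilon)\ge1-\theta-O(\sqrt\delta+\varepsilon)$. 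For $i=\ell$ the witness is $\specdir$, and the computation of the previous paragraph gives $\langle g,\specdir\rangle\le-\theta(1-|g_\ell|)+O(\sqrt\delta+\varepsilon)$, so non-illumination forces $|g_\ell|\ge1-O\!\big((\sqrt\delta+\varepsilon)/\theta\big)$ for some normalized $g$. Taking such $g^{(i)}$ and writing $g^{(i)}=v_ie_i+r^{(i)}$ we have $\sign g^{(i)}_i=v_i$ and $\|r^{(i)}\|_2\le\|r^{(i)}\|_1=2\big(1-|g^{(i)}_i|\big)\le O(\theta)$ once $\delta,\varepsilon$ are small enough relative to $\theta$ (with $\varepsilon\ll\theta^2$). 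Since $\IllumSet(B,x)=\{y:\langle g,y\rangle<0\ \text{for all }g\in\Gauss(B,x)\}$, each $g^{(i)}$ imposes $v_iy_i\le\|r^{(i)}\|_2\|y\|_2$ on $\IllumSet(B,x)$, so $\IllumSet(B,x)\cap B_2^n$ lies in the intersection of $B_2^n$ with a translate, by a vector of Euclidean length $O(\theta\sqrt n)$, of the orthant $\{y:v_iy_i\le0\ \forall i\}$, whence $\solid(B,x)\le(1+O(\theta\sqrt n))^n\cdot2^{-n}$. It therefore suffices to pick $\theta_{\text{\tiny\ref{l: solid}}}=\theta_{\text{\tiny\ref{l: solid}}}(n,\kappa)\in(0,1)$ so small that $(1+C(n)\theta_{\text{\tiny\ref{l: solid}}})^n\le1+\kappa$ for the implicit constant $C(n)$, and then, for each $\theta\le\theta_{\text{\tiny\ref{l: solid}}}$, to choose $\delta_{\text{\tiny\ref{l: solid}}},\varepsilon_{\text{\tiny\ref{l: solid}}}$ small enough (with $\varepsilon_{\text{\tiny\ref{l: solid}}}\ll\theta^2$) that all the $O(\sqrt\delta)$, $O(\varepsilon)$ and $O(\varepsilon/\theta)$ errors above are dominated by $\theta$.

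The main obstacle is Stage two, and in particular the role of the $\theta$-tilted directions in $\Class_n(\varepsilon,\theta)$: it is exactly because the illumination directions adjacent to $-v$ and $-v'$ are pulled towards the coordinate axes by the definite amount $\theta$ that a non-illuminated point near a vertex is forced to have a normal within $\ell_1$ distance $O(\theta)$ of \emph{every} $v_ie_i$, rather than merely one normal somewhere in the correct orthant; extracting this quantitative ``fatness'' of $\Gauss(B,x)$ while keeping the hierarchy ``$\varepsilon$ much smaller than $\theta$, and $\delta$ smaller still'' consistent across all the estimates is the technical core of the proof. A secondary point is that the coordinate $\ell$ must be handled separately via $\specdir$ — it is the only element of $S$ constraining that coordinate — and the very same $\specdir$-estimate is what eliminates the edge case $A=[n]\setminus\{\ell\}$ in Stage one.
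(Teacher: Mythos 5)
Your proof is correct and reaches the same conclusion via the same two-stage skeleton as the paper (localize a non-illuminated $x$ near a distinguished vertex, then force the normal cone to be ``fat''), but the method is genuinely different in two places.

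First, where the paper invokes the Hahn--Banach theorem to produce separating hyperplanes $H_i$ and studies their unit normals $z^i$, you use the polarity between $\Gauss(B,x)$ and $\IllumSet(B,x)$ directly: non-illumination by a direction $s$ yields a normal $g\in\Gauss(B,x)$ with $\langle g,s\rangle\ge 0$, and your $\ell_1$-normalized normal-cone estimate (that $1\le\langle g,x\rangle\le 1+2\delta$, hence $\sum_{i\notin A}|g_i|\le 2\sqrt\delta$ and $v_ig_i\ge|g_i|-O(\delta)$ on $A$) turns this immediately into a coordinate-wise constraint on $g$. Second, and more importantly, the paper establishes the key qualitative facts through three compactness claims (Claims~4, 5, 6), whereas you prove everything quantitatively: the concentration $\|g^{(i)}-v_ie_i\|_1\le O(\theta)$ drops out of the inner-product computation with the tilted directions, and the final solid-angle bound is an explicit volume estimate for the intersection of $B_2^n$ with a translated orthant, giving $\solid(B,x)\le(1+O(\theta\sqrt n))^n\,2^{-n}$. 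The paper explicitly states that its use of continuity arguments leaves $\delta(n)$ implicit ``for the sake of clarity'' and that the argument can in principle be made explicit; your write-up does exactly that. One stylistic consequence is that your localization of $x$ in Stage~one is of order $\sqrt\delta$ rather than the paper's sharper $9\delta$ from Claim~3, but this costs nothing since you never use the linear rate. A minor point worth flagging if you polish this: the step ``$s$ does not illuminate $x$ $\Rightarrow$ some $g\in\Gauss(B,x)$ satisfies $\langle g,s\rangle\ge 0$'' deserves a one-line justification (it follows from $\IllumSet(B,x)=\{y\neq 0:\langle g,y\rangle<0\ \forall g\in\nu_B(x)\}$ together with compactness of $\nu_B(x)$), and the sign determination $\sign g^{(\ell)}_\ell=v_\ell$ should be stated, since it is needed to write $g^{(\ell)}=v_\ell e_\ell+r^{(\ell)}$ with $\|r^{(\ell)}\|_1=2(1-|g^{(\ell)}_\ell|)$; both are quick consequences of $\langle g,v\rangle\ge 1-O(\sqrt\delta)$.
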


\medskip

The following two lemmas are the core of our argument:
\begin{lemma}\label{l: pseudo=cubic}
Let $n>2$ and let $\varepsilon,\theta\in(0,1)$ be any numbers.
Let $B$ be a convex body in $\R^n$ such that $\{-1,1\}^n\subset \partial B$ and $B\neq B_\infty^n$.
Then there is a set $S\in\Class_n(\varepsilon,\theta)$
which illuminates every point in $\{-1,1\}^n$ (viewed as boundary points for $B$).
\end{lemma}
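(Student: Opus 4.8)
The plan is to illuminate $\{-1,1\}^n$ directly by exhibiting a good pair of distinguished vertices $v,v'$ and then checking that the standard sign-vector construction, with the prescribed modifications for the two chosen directions, does the job. Since $B\neq B_\infty^n$ but $\{-1,1\}^n\subset\partial B$, the body $B$ is strictly contained in the cube; pick any point $z\in\partial B_\infty^n\setminus\partial B$, so $z$ lies strictly inside $B$. After a harmless permutation of coordinates and sign flips, we may assume $z$ lies on the facet $\{x_n=1\}$ of the cube; let $v:=(1,1,\dots,1)$ and $v':=(1,1,\dots,1,-1)$ be the two endpoints of the edge of the cube in direction $e_n$ through... actually more carefully: choose $v,v'$ to be a pair of adjacent vertices of the facet containing $z$, i.e.\ $v_i=v_i'$ for $i<\ell$ and $v_\ell=-v_\ell'$ for some $\ell<n$, with $v_n=v_n'=1$. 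The point of this choice is that the edge $[v,v']$ and the facet $\{x_n=1\}$ interact with the interior point $z$ so that the "glued" direction $\specdir$, pointing roughly in $-\tfrac{v+v'}{2}$ (hence with $\specdir_n<0$ and $|\specdir_\ell|$ small), illuminates both $v$ and $v'$ as boundary points of $B$.

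The main steps, in order: (1) Since $\{-1,1\}^n\subset\partial B\subset B_\infty^n$, for every vertex $w$ the Gauss image $\Gauss(B,w)$ contains the normal cone of the cube at $w$, i.e.\ contains $\{y:\ \sign(y_i)=\sign(w_i)\}$; dually, $\IllumSet(B,w)\subset\{x:\ \sign(x_i)=-\sign(w_i)\text{ whenever }x_i\neq0\}$, and in fact the open orthant $\{x:\ \sign(x_i)=-\sign(w_i)\ \forall i\}$ illuminates $w$. Hence for every $w\notin\{v,v'\}$, placing a vector of $S$ close to $-w$ (within $\varepsilon B_\infty^n$) illuminates $w$; for the vertices $w$ adjacent to $v$ or $v'$ the extra $\theta$-tilt in the coordinate $i(w)$ only moves the vector deeper into the illuminating orthant of $w$, so these requirements are consistent with illumination. (2) It remains to choose $\specdir$ in the prescribed parallelepiped $-\prod_j\big(\tfrac{v_j+v_j'}{2}[\theta,1]\big)+\varepsilon B_\infty^n$ that illuminates both $v$ and $v'$. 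Because $z$ is interior to $B$ and lies strictly inside the facet $\{x_n=1\}$, a direction of the form $\specdir=-\tfrac{v+v'}{2}+\rho(z-\tfrac{v+v'}{2})$ for small $\rho>0$, suitably truncated, points from $v$ (and from $v'$) into the interior of $B$: indeed $v+t(z-v)\in\Inter B$ for small $t>0$ since $z\in\Inter B$, and $\specdir$ is, up to an $O(\varepsilon)$ error and a positive scaling, a convex-type combination pointing towards $z$ from the midpoint; one checks the $\ell$-th coordinate is $O(\varepsilon)$ and the other coordinates have the correct sign $-\tfrac{v_j+v_j'}{2}$ and magnitude in $[\theta,1]$ up to $\varepsilon$, after possibly rescaling $\theta$. (3) Assemble $S$ from these $2^n-2$ perturbed sign vectors together with $\specdir$; this has cardinality $2^n-1$, lies in $\Class_n(\varepsilon,\theta)$ by construction, and illuminates all of $\{-1,1\}^n$.

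The main obstacle is step (2): verifying that a single vector $\specdir$ can be squeezed into the narrow parallelepiped around $-\tfrac{v+v'}{2}$ (which forces $\specdir_\ell$ to be within $\varepsilon$ of $0$, each $\specdir_j$ for $j\neq\ell,n$ within $\varepsilon$ of $-[\theta,1]$, and $\specdir_n\in -[\theta,1]+[-\varepsilon,\varepsilon]$) while still illuminating \emph{both} $v$ and $v'$ as boundary points of $B$ — this is exactly the place where the hypothesis $B\neq B_\infty^n$ must be used, and it is why one must choose $v,v'$ relative to the location of the "missing" boundary point $z$, rather than arbitrarily. The key geometric fact to extract is that if $z\in\Inter B$ lies in the relative interior of a facet $F$ of the cube and $v,v'$ are adjacent vertices of $F$, then the direction from $v$ (resp.\ $v'$) towards $z$ has all coordinates of the correct sign to fit the parallelepiped; once that is established, a compactness/openness argument gives the room to absorb the $\varepsilon$-perturbations and to realize the remaining $2^n-2$ directions as well.
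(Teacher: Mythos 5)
There is a genuine gap, located exactly where you flag an obstacle. First, a slip: from $\{-1,1\}^n\subset\partial B$ one gets $B_\infty^n=\conv\{-1,1\}^n\subset B$ (not the other way around), so $B$ strictly \emph{contains} the cube; your conclusion that some $z\in\partial B_\infty^n\setminus\partial B$ exists and lies in $\Inter B$ is nevertheless correct. The real problem is your choice of the distinguished pair $v,v'$. You take them to be adjacent vertices \emph{of the facet $F$ of the cube containing $z$}, so they differ in a coordinate $\ell<n$ while $v_n=v_n'=1$. The definition of $\Class_n(\varepsilon,\theta)$ then forces $|\specdir_\ell|\leq\varepsilon$, so $(v+t\specdir)_\ell=v_\ell+O(t\varepsilon)$: the point $v+t\specdir$ stays, up to an $O(\varepsilon)$-perturbation, on the cube face $\{x_\ell=v_\ell\}$. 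That face can be entirely contained in $\partial B$, in which case no such $\specdir$ illuminates $v$. Concretely, take $n=3$, $B=\conv\big(B_\infty^3\cup\{(0,0,1+\epsilon)\}\big)$, $z=(0,0,1)$, $v=(1,1,1)$, $v'=(-1,1,1)$: the hyperplane $\{x_1=1\}$ supports $B$, so any $\specdir$ with $\specdir_1=0$ keeps $v+t\specdir$ on $\partial B$, and perturbing $\specdir_1$ inside $[-\varepsilon,\varepsilon]$ can save at most one of $v,v'$ (the other exits the half-space $\{\pm x_1\leq 1\}$ that supports $B$), never both. Your heuristic ``$\specdir$ points towards $z$ from the midpoint'' is misleading: the dominant part of $\specdir$ is $-\tfrac{v+v'}{2}$, which points toward the origin, and the $z$-component is only $O(\rho)$; so the fact that $v+t(z-v)\in\Inter B$ is not applicable to $v+t\specdir$.

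The correct choice — and the paper's — is to let $v,v'$ differ in a coordinate along which $B$ actually protrudes from the cube, not in a coordinate of a facet containing $z$. Since $B\supsetneq B_\infty^n$, there is $x\in\Inter B$ with, say, $x_1>1$; the pyramid over the facet $\{1\}\times[-1,1]^{n-1}\subset B$ with apex $x$ produces a point $y=(y_1,0,\dots,0)\in\Inter B$ with $y_1>1$. Take $v=(1,\dots,1)$, $v'=(-1,1,\dots,1)$ (so $\ell=1$), and $\specdir:=(\varepsilon',-1,\dots,-1)$ with $\varepsilon'=\min(\varepsilon,(y_1-1)/2)$. Then $v+\specdir=(1+\varepsilon',0,\dots,0)$ lies strictly between $(1,0,\dots,0)\in B$ and $y\in\Inter B$, hence in $\Inter B$; and $v'+\specdir=(-1+\varepsilon',0,\dots,0)\in\Inter B_\infty^n\subset\Inter B$. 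So $\specdir$ illuminates both $v$ and $v'$. Your handling of the remaining $2^n-2$ directions (sign vectors, tilted by $\theta$ near $v,v'$) is fine and matches the paper; the gap is only in step (2), and it cannot be patched by a compactness argument without first changing the pair $v,v'$ as above.
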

\begin{rem}
Note that in the above lemma we illuminate only a subset of the boundary and not the entire body.
\end{rem}

\begin{lemma}\label{l: pseudo not cubic}
For any $n>2$ and $\varepsilon\in(0,1)$ there is
$\theta_{\text{\tiny\ref{l: pseudo not cubic}}}=\theta_{\text{\tiny\ref{l: pseudo not cubic}}}(n)$
with the following property.
Let $0<\theta\leq \theta_{\text{\tiny\ref{l: pseudo not cubic}}}$. Then there is
$r_{\text{\tiny\ref{l: pseudo not cubic}}}=r_{\text{\tiny\ref{l: pseudo not cubic}}}(n,\varepsilon,\theta)\in(0,1/2)$
such that for any convex polytope $P$ with $2^n$ vertices and $\dist(P,B_\infty^n)\neq 1$ so that
for each $v\in\{-1,1\}^n$ there is a vertex $v'$ of $P$ with $v'-v\in r_{\text{\tiny\ref{l: pseudo not cubic}}}B_\infty^n$,
we have that $P$ is illuminated by some set $S\in \Class_n(\varepsilon,\theta)$.
\end{lemma}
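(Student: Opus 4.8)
The plan is to reduce the statement to Lemma~\ref{l: pseudo=cubic} by an affine normalization. Given a polytope $P$ with $2^n$ vertices, each within $r_{\text{\tiny\ref{l: pseudo not cubic}}}B_\infty^n$ of a corresponding vertex of the cube, I first want to apply a linear map bringing $P$ into a position where its vertices lie (or almost lie) in the right relation to $\{-1,1\}^n$. The natural move is to invoke Lemma~\ref{l: perturbation of cube}: the hypothesis on $P$ is exactly the hypothesis of that lemma (with $r_{\text{\tiny\ref{l: perturbation of cube}}}$ playing the role of our $r_{\text{\tiny\ref{l: pseudo not cubic}}}$), \emph{except} that here $P$ need not be a parallelotope. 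So I cannot literally apply Lemma~\ref{l: perturbation of cube}; instead I would use a direct analogue of its proof. Pick, say, the vertex $f(1,\dots,1)$ of $P$ closest to $(1,\dots,1)$ and the $n$ vertices $f(v)$, $v\in S$ (the neighbours of $(1,\dots,1)$ in the cube); these are linearly independent for $r$ small, so there is a unique affine map $A$ with $A(f(v))=v$ for $v\in\{(1,\dots,1)\}\cup S$, and $A$ is close to the identity, $\|A^{\mathrm{lin}}-\Id\|\le \beta$, with $\beta=\beta(n,r)\to 0$ as $r\to 0$ (same averaging estimate as in the proof of Lemma~\ref{l: perturbation of cube}). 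Crucially, the illumination number and the class $\Class_n$ are affine-covariant in the appropriate sense, so it suffices to illuminate $A(P)$.

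Now $Q:=A(P)$ is a polytope with $2^n$ vertices, exactly $2^n-1$ of which I can control only up to a small error: the vertices $Q$ has at $(1,\dots,1)$ and at the neighbours of $(1,\dots,1)$ are \emph{exactly} the cube vertices, while the remaining $2^n-n-1$ vertices of $Q$ are within some small $\rho B_\infty^n$ of the corresponding cube vertices, where $\rho=\rho(n,r)\to 0$ as $r\to 0$. This is not quite $\{-1,1\}^n\subset\partial Q$, so Lemma~\ref{l: pseudo=cubic} does not apply verbatim either. The fix is a second, cheaper perturbation: I would like to say that since $Q$ is $\rho$-close to a polytope $Q'$ with $\{-1,1\}^n\subset\partial Q'$, an illuminating set for $\{-1,1\}^n$ as boundary points of $Q'$ produced by Lemma~\ref{l: pseudo=cubic} also illuminates $Q$, after enlarging the $\varepsilon$-parameter. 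Concretely: take $Q'$ to be the convex hull of $\{-1,1\}^n$ together with (a translate pushing out) whatever is needed so that $Q\subset Q'$ and $\{-1,1\}^n\subset\partial Q'$, or more simply homothetically shrink $Q$ slightly so that all its vertices get pulled to a common sphere touching the cube vertices; either way, one obtains a body $Q'$ of the type required by Lemma~\ref{l: pseudo=cubic}, with $Q' \neq B_\infty^n$ (here I use $\dist(P,B_\infty^n)\neq 1$, which is affine-invariant and survives the perturbation, so $Q'$ is genuinely not a cube). Lemma~\ref{l: pseudo=cubic} then yields $S_0\in\Class_n(\varepsilon/2,\theta)$ illuminating every vertex of the cube as a boundary point of $Q'$.

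The remaining task is to promote ``$S_0$ illuminates $\{-1,1\}^n\subset\partial Q'$'' to ``$S_0$ (or a slight perturbation) illuminates all of $\partial Q$''. Two points here. First, $Q$ is a polytope with only $2^n$ vertices all near cube vertices; a direction that illuminates a vertex $w$ of the cube, viewed in $Q'$, illuminates a full neighbourhood of $w$ on $\partial Q'$, hence (for $\rho$ small compared to the ``illumination margin'' coming from $\Class_n$, which is of order $\theta$) illuminates the nearby vertex of $Q$ and its incident faces; since every boundary point of a polytope is illuminated as soon as all vertices of its minimal face are illuminated, and $\Class_n$ directions come with uniform margins, this goes through by a compactness/continuity argument. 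Second, passing from $S_0\in\Class_n(\varepsilon/2,\theta)$ to the final $S\in\Class_n(\varepsilon,\theta)$: the map $A^{-1}$ (undoing the first normalization) is close to $\Id$, so Lemma~\ref{l: stability} converts $S_0$ into $A^{-1}(S_0)\in\Class_n(\varepsilon/2+\alpha,\theta)\subset\Class_n(\varepsilon,\theta)$ for $\alpha\le\varepsilon/2$, provided $r$ (hence $\beta$) is small enough that $\|A^{-1}-\Id\|\le\beta_{\text{\tiny\ref{l: stability}}}(n,\varepsilon/2)$; and $A^{-1}(S_0)$ illuminates $A^{-1}(Q')\supset A^{-1}(Q)=P$ on the relevant boundary portion.

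The main obstacle, and the step I would spend the most care on, is the quantitative matching of scales in the last paragraph: I need the perturbation error $\rho=\rho(n,r)$ of the vertices of $Q$ to be strictly smaller than the built-in illumination margin of sets in $\Class_n(\varepsilon,\theta)$ — a margin which is governed by $\theta$ (and only mildly by $\varepsilon$) — so that gluing a pair of adjacent directions and repositioning the ``$\theta$-shifted'' directions still covers all the (slightly moved) vertices and faces of $Q$. This forces the order of quantifiers $\theta_{\text{\tiny\ref{l: pseudo not cubic}}}=\theta_{\text{\tiny\ref{l: pseudo not cubic}}}(n)$ first, then $r_{\text{\tiny\ref{l: pseudo not cubic}}}=r_{\text{\tiny\ref{l: pseudo not cubic}}}(n,\varepsilon,\theta)$ small enough to beat both the $\Class_n$-margin and the thresholds $\beta_{\text{\tiny\ref{l: stability}}}$ and $r_{\text{\tiny\ref{l: perturbation of cube}}}$-type constants. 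None of these estimates is deep, but keeping the dependencies consistent — in particular ensuring $Q'\neq B_\infty^n$ so Lemma~\ref{l: pseudo=cubic} is genuinely applicable, and that the ``illuminates the vertices'' conclusion of Lemma~\ref{l: pseudo=cubic} really does propagate to the whole boundary of the perturbed polytope — is where the argument has to be written out carefully rather than waved through.
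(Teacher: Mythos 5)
Your route --- normalize $P$ by an affine map in the spirit of Lemma~\ref{l: perturbation of cube}, fatten it to a body $Q'$ with $\{-1,1\}^n\subset\partial Q'$, apply Lemma~\ref{l: pseudo=cubic} to $Q'$, and then transfer the illumination back to $P$ --- is genuinely different from the paper's. The paper never passes through Lemma~\ref{l: pseudo=cubic} here; it works with $P$ directly as a combinatorial object. It proves (Claim~\ref{claim2lemma7}, via Claims~\ref{claim1lemma7}--\ref{claim1cLemma7}) that a $2^n$-vertex polytope near the cube which is \emph{not} a parallelotope must possess, for some $i$, a pair of near-facet simplices $\conv(\mathcal W_i^{\pm}\cup\{v^{\pm}\})$ that are either non-parallel or non-supporting, and then builds the distinguished direction $\specdir$ explicitly from the resulting picture, with a free parameter $\xi>0$ chosen small \emph{after seeing $P$}.

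The step you yourself single out as ``the main obstacle'' is where the argument genuinely breaks, and I do not see a repair within your framework. You need the perturbation error $\rho=\rho(n,r)$ --- a quantity of size comparable to $r$, fixed in advance --- to be dominated by the margin with which the set $S_0\in\Class_n(\varepsilon/2,\theta)$ produced by Lemma~\ref{l: pseudo=cubic} illuminates $\{-1,1\}^n$ in $Q'$, and you assert this margin is ``governed by $\theta$.'' That is true for the $2^n-2$ ordinary directions, but it is false for the distinguished direction. In the proof of Lemma~\ref{l: pseudo=cubic} one takes $\specdir=(\varepsilon',-1,\dots,-1)$ with $\varepsilon'=\min(\varepsilon,(y_1-1)/2)$, where $(y_1,0,\dots,0)\in\Inter B$, $y_1>1$; the margin with which $\specdir$ illuminates the glued vertex $v=(1,\dots,1)$ is of order $y_1-1$, i.e.\ it is governed by how far $B$ protrudes past the cube, not by $\theta$. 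The hypothesis of Lemma~\ref{l: pseudo not cubic} gives only $\dist(P,B_\infty^n)\neq 1$, with \emph{no} lower bound on $\dist(P,B_\infty^n)-1$: a non-parallelotope $P$ whose vertices lie within $rB_\infty^n$ of the cube vertices can deviate from being a parallelotope by an amount $10^{-100}r$. In that regime the body $Q'$ you construct is correspondingly close to $B_\infty^n$, so $y_1-1\ll\rho$, and the perturbation destroys the illumination of the pair $v,v'$ before you can transfer it to $Q$ or $P$. (A second, related soft spot: if $Q'\supset Q$, then a direction illuminating a point of $\partial Q'$ need not illuminate the nearby point of $\partial Q$, since $\Inter Q\subset\Inter Q'$ goes the wrong way; the ``homothetic shrink'' alternative has the mirror-image problem of pushing cube vertices off $\partial Q'$.) The paper avoids all of this by never decoupling the construction of $\specdir$ from the specific polytope $P$: it is perfectly content for the illumination margin $\xi$ to shrink to zero with $P$, because only the box constraint on $\frac12 p$ (which holds as soon as $r$ and $\xi$ are small) has to be uniform.
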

\begin{rem}
It is crucial that parameter $\theta_{\text{\tiny\ref{l: pseudo not cubic}}}$
in the above lemma depends only on $n$ and {\it not} on $\varepsilon$.
\end{rem}

\medskip

The above statements allow to complete the proof of the main theorem:

\begin{proof}[Proof of Theorem~\ref{t: main}]

Let us start by defining parameters. We fix any $n>2$ and set
\begin{align*}
&\theta:=\min(\theta_{\text{\tiny\ref{l: pseudo not cubic}}}(n),\theta_{\text{\tiny\ref{l: solid}}}(n,\kappa_{\text{\tiny\ref{l: pseudov}}}(n))),\;\;
\varepsilon:=\varepsilon_{\text{\tiny\ref{l: solid}}}(n,\kappa_{\text{\tiny\ref{l: pseudov}}}(n),\theta),\;\;
\beta:=\beta_{\text{\tiny\ref{l: stability}}}(n,\varepsilon/2),\\
&r:=\min\big(r_{\text{\tiny\ref{l: pseudo not cubic}}}(n,\varepsilon,\theta),r_{\text{\tiny\ref{l: perturbation of cube}}}(n,\beta),\eta_{\text{\tiny\ref{l: pseudov}}}(n)\big),\;\;
\delta:=\min\big(\delta_{\text{\tiny\ref{l: solid}}}(n,\kappa_{\text{\tiny\ref{l: pseudov}}}(n),\theta),\delta_{\text{\tiny\ref{l: pseudov}}}(n,r)\big).
\end{align*}
Consider a convex body $B$ in $\R^n$ with $1\neq\dist(B,B_\infty^n)\leq 1+\delta$.
Assume that $B$ is in a $\star$-position.
Let $\pseudov$ be a proper set of $(r)$-pseudo-vertices of $B$ (note that $\delta\leq r/2$ so $\pseudov$ exists).

As the first step, we show that there is a set of directions $S\in\Class_n(\varepsilon,\theta)$
which illuminates $\conv(\pseudov)$.
Indeed, if $\conv(\pseudov)$ is not a parallelotope then the assertion follows from Lemma~\ref{l: pseudo not cubic}
and our choice of parameters.
Otherwise, if $\dist(\conv(\pseudov),B_\infty^n)=1$ then, in view of Lemma~\ref{l: perturbation of cube},
there is a linear operator $T$ and a vector $y$ in $\R^n$ such that
$\|T-\Id\|\leq\beta$, $\|T^{-1}-\Id\|\leq\beta$ and $T(\conv(\pseudov))+y=B_\infty^n$.
By Lemma~\ref{l: pseudo=cubic}, we can find a set $S'\in\Class_n(\varepsilon/2,\theta)$
which illuminates points in $\{-1,1\}^n$ considered as boundary points of $T(B)+y$.
Hence, $T^{-1}(S')$ illuminates $\conv(\pseudov)$ (again, viewed as a subset of the boundary of $B$).
Due to the assumptions on $T$ and Lemma~\ref{l: stability}, $S:=T^{-1}(S')$ belongs to $\Class_n(\varepsilon,\theta)$.

Now, having constructed $S$, assume that there is a point $x\in\partial B$ which is not illuminated by $S$.
Then, in view of Lemma~\ref{l: solid} and our choice of parameters,
we have $\solid(B,x)\leq (1+\kappa_{\text{\tiny\ref{l: pseudov}}}(n))\cdot 2^{-n}$.
But then, by Lemma~\ref{l: pseudov} (applied with $\eta:=r$), $x$ is the unique $(r)$-pseudo-vertex
in $(r B_\infty^n+v)\cap \partial B$ for some vertex $v$ of the standard cube $B_\infty^n$. Hence, $x$ must belong to $\pseudov$
leading to contradiction.
Thus, $B$ is entirely illuminated in $2^n-1$ directions.
\end{proof}

\section{{Proofs of Lemmas~\ref{l: stability}---\ref{l: pseudo not cubic}}}

\subsection{Proof of Lemma~\ref{l: stability}.}
Fix $n>2$ and parameters $\varepsilon,\theta\in(0,1)$, $\alpha>0$.
Instead of the spectral norm $\|\cdot\|$, it will be convenient to consider $\|\cdot\|_{\infty\to\infty}$
(this makes no difference since we allow the parameter $\beta_{\text{\tiny\ref{l: stability}}}$ to depend on $n$). 
Set $\beta:=\alpha/2$.
Suppose $T$ is a linear operator satisfying $\|T-\Id\|_{\infty\to\infty}\leq \beta$. 

Take a vertex $w$ of the standard cube and let $p\in -w+\varepsilon B_{\infty}^n$.
Observe that $\|p+w\|_{\infty}\leq \varepsilon$
and $\|p\|_{\infty}\leq 1+\varepsilon$. Thus
$$\|Tp+w\|_{\infty}\leq \|Tp-p\|_{\infty}+\|p+w\|_{\infty}
\leq \|T-\Id\|_{\infty\to\infty}\|p\|_{\infty}+\|p+w\|_{\infty}\leq \beta(1+\varepsilon)+\varepsilon\leq \alpha+\varepsilon,$$
and therefore $Tp\in -w+(\alpha+\varepsilon)B_{\infty}^n$.

In the same manner, given any $i\leq n$ and any $p'\in -\sum_{j:j\neq i}w_j e_j-\theta w_ie_i+\varepsilon B_\infty^n$,
we have
$$\Big\|Tp'+\Big(\sum_{j:j\neq i}w_j e_j+\theta w_ie_i\Big)\Big\|_{\infty}\leq \alpha+\varepsilon.$$
Finally, for any two adjacent vertices $v,v'$ of the standard cube and for any
$$p''\in -\prod_{j=1}^n\left(\frac{v_j+v_j'}{2}\cdot[\theta,1]\right)+\varepsilon B_\infty^n,$$
we have
$$Tp''\in -\prod_{j=1}^n\left(\frac{v_j+v_j'}{2}\cdot[\theta,1]\right)+(\alpha+\varepsilon) B_\infty^n.$$
Together with the definition of the classes $\Class_n(\varepsilon,\theta)$, this implies the result.

\subsection{Proof of Lemma~\ref{l: pseudov}}

We will prove the following two claims.

\begin{Claim}\label{claim1}
For any $n>2$ and $\eta>0$ there is $\delta_0(n,\eta)>0$ with the following property.
Let $B$ be a convex body in $\R^n$ with $B_\infty^n\subset B\subset (1+\delta_0)B_\infty^n$,
and let $x$ be a point on the boundary of $B$ such that $\|x-v\|_\infty\geq \eta$ for all $v\in\{-1,1\}^n$.
Then necessarily $\sigma(B,x)\geq 1.5\cdot 2^{-n}$.
\end{Claim}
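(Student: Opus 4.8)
The plan is to reduce Claim~\ref{claim1} to a statement about the standard cube alone and then argue by compactness. First I would record a monotonicity observation: if $x\in\partial B$ and $B_\infty^n\subset B$, then for every $p\in\Inter B_\infty^n$ and every $\varepsilon\in(0,1)$ convexity gives $(1-\varepsilon)x+\varepsilon p\in\Inter B$, so the direction $p-x$ illuminates $x$. Since $\IllumSet(B,x)$ is a cone, it contains $\mathrm{cone}(B_\infty^n-x):=\{\lambda(q-x):\lambda\ge0,\,q\in B_\infty^n\}$ up to a set of measure zero, and hence
$$\solid(B,x)\ \ge\ g(x):=\solid\big(\mathrm{cone}(B_\infty^n-x)\big).$$
Moreover $x\in B\subset(1+\delta_0)B_\infty^n$ while $x\notin\Inter B_\infty^n$, so $1\le\|x\|_\infty\le1+\delta_0$. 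Thus it suffices to find $\delta_0=\delta_0(n,\eta)>0$ such that $g(x)\ge1.5\cdot 2^{-n}$ for every $x$ with $1\le\|x\|_\infty\le1+\delta_0$ and $\|x-v\|_\infty\ge\eta$ for all $v\in\{-1,1\}^n$.

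I would first check the limiting profile $\delta_0=0$: if $\|x\|_\infty=1$ and $\|x-v\|_\infty\ge\eta$ for every vertex $v$, then $x$ has some coordinate $i_0$ with $|x_{i_0}|<1$ (otherwise $x\in\{-1,1\}^n$, contradicting $\|x-x\|_\infty\ge\eta$). Then $\mathrm{cone}(B_\infty^n-x)$ is a product of a half-line in each coordinate $i$ with $|x_i|=1$ and of a full line in every other coordinate; since there are at most $n-1$ coordinates of the first kind, $g(x)\ge 2^{-(n-1)}=2\cdot 2^{-n}>1.5\cdot 2^{-n}$.

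To pass to small $\delta_0>0$ I would argue by contradiction: if the required $\delta_0$ did not exist there would be points $x_k$ with $1\le\|x_k\|_\infty\le 1+1/k$, $\|x_k-v\|_\infty\ge\eta$ for all $v$, and $g(x_k)<1.5\cdot 2^{-n}$. A convergent subsequence gives $x_*$ with $\|x_*\|_\infty=1$ and $\|x_*-v\|_\infty\ge\eta$, hence $g(x_*)\ge 2\cdot 2^{-n}$ by the previous step. On the other hand $g$ is lower semicontinuous: if $z=\lambda(p-x_*)$ with $\lambda>0$, $p\in\Inter B_\infty^n$ and $\|z\|_2<1$, then $p+(x_k-x_*)\in\Inter B_\infty^n$ for all large $k$, whence $z=\lambda\big((p+x_k-x_*)-x_k\big)\in\mathrm{cone}(B_\infty^n-x_k)\cap B_2^n$ for all large $k$; Fatou's lemma applied to the indicators of the sets $\mathrm{cone}(B_\infty^n-x_k)\cap B_2^n$ then yields $\liminf_k g(x_k)\ge g(x_*)$. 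This contradicts $g(x_k)<1.5\cdot 2^{-n}<g(x_*)$, proving the claim.

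I expect the lower semicontinuity of $g$ to be the only genuinely delicate step: the illuminating cone of a body is in general discontinuous under perturbations — the very pathology emphasized in the introduction — and the argument succeeds precisely because the inclusion $B_\infty^n\subset B$ confines us to the favorable side of it (the illuminating cone at $x$ is never smaller than the cube's tangent cone at $x$), together with the elementary fact that displacing the apex of a fixed convex cone is lower, not upper, semicontinuous for the solid angle. If one preferred to avoid compactness, the same bound can be made effective: choosing $\delta_0$ small enough, one checks that for admissible $x$ the cone $\mathrm{cone}(B_\infty^n-x)$ contains a cone of solid angle at least $1.5\cdot 2^{-n}$ obtained by slightly shrinking the orthant-type cone $\{y:\sign(x_i)\,y_i<0\text{ for every }i\text{ with }|x_i|>1-\eta/2\}$, which has solid angle $2\cdot 2^{-n}$ or larger since at most $n-1$ coordinates can satisfy $|x_i|>1-\eta/2$.
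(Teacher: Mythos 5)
Your proof is correct and takes essentially the same route as the paper's: both reduce the bound to the solid angle of the tangent cone of $B_\infty^n$ at $x$ (the paper's $K_x$, modulo a slip in its displayed formula, is exactly your $\mathrm{cone}(B_\infty^n-x)$) and then combine a compactness argument with lower semicontinuity of this solid-angle function, anchored on the value $2^{-n+1}$ attained on $\partial B_\infty^n$ away from the vertices. You simply spell out the lower semicontinuity and the boundary computation explicitly where the paper records them as observations.
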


\begin{Claim}\label{claim2}
For any $n>2$ there are positive $\eta'=\eta'(n)<1$ and $\kappa'=\kappa'(n)<0.5$ with the following property.
Let $B$ be a convex body in $\R^n$ satisfying $B_\infty^n\subset B$, let $v$ be any vertex of $B_\infty^n$
and let $x,y$ be two distinct points in $\partial B\cap(v+\eta' B_\infty^n)$. Then necessarily $\max(\solid(x),\solid(y))> (1+\kappa')2^{-n}$.
\end{Claim}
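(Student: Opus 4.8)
The plan is to argue by compactness and a limiting argument, reducing to the rigid case $B = B_\infty^n$. Suppose the claim fails. Then for some fixed $n>2$ there are sequences $\eta_k'\to 0$, $\kappa_k'\to 0$, convex bodies $B_k\supset B_\infty^n$, vertices $v^{(k)}$ of $B_\infty^n$ (which, passing to a subsequence, we may take to be a single vertex $v$), and distinct points $x_k, y_k\in\partial B_k\cap(v+\eta_k' B_\infty^n)$ with $\max(\solid(B_k,x_k),\solid(B_k,y_k))\le (1+\kappa_k')2^{-n}$. First I would note that the hypothesis $B_\infty^n\subset B_k$ together with $x_k,y_k\in v+\eta_k'B_\infty^n$ forces the $B_k$ to converge (in Hausdorff distance, along a further subsequence) to a convex body $B_\infty$ with $B_\infty^n\subset B_\infty$ and $v\in\partial B_\infty$; in fact, since the only point of $B_\infty^n$ within $\ell_\infty^n$-distance $\eta_k'\to 0$ of $v$ is $v$ itself and $B_\infty^n\subset B_k$, one checks that any point of $\partial B_\infty$ near $v$ must equal $v$, i.e.\ near $v$ the limit body has the same local cone structure as the cube. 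The key local fact is that at the vertex $v$ of $B_\infty^n$ the illuminating cone is an orthant, so $\solid(B_\infty^n,v)=2^{-n}$, and this is the unique boundary point in a neighborhood with solid angle $\le 2^{-n}$; more importantly, the illuminating cone at $v$ is the \emph{smallest possible} among all convex bodies containing $B_\infty^n$ and touching the corner region, with equality (solid angle exactly $2^{-n}$) only when the body locally coincides with the cube near $v$.

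Next I would quantify this rigidity. The core estimate is a statement of the form: there is $c=c(n)>0$ such that for any convex body $B\supset B_\infty^n$ and any two distinct points $x,y\in\partial B$ both lying in a small neighborhood of a vertex $v$, at least one of them has solid angle at least $(1+c)2^{-n}$. The reason is that $\IllumSet(B,x)$ and $\IllumSet(B,y)$ are polar to the Gauss cones $\Gauss(B,x)$, $\Gauss(B,y)$; since $B\supset B_\infty^n$, every supporting hyperplane of $B$ near $v$ separates $B$ from a face-normal direction of the cube, so each of $\Gauss(B,x),\Gauss(B,y)$ is contained in (a small perturbation of) the normal orthant at $v$, making each illuminating cone contain (a small perturbation of) the opposite orthant $-\prod_i[0,\infty)v_i e_i$. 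If \emph{both} $x$ and $y$ had solid angle close to $2^{-n}$, then both illuminating cones would be close to that single orthant; but $x\ne y$ are two distinct boundary points of the \emph{same} convex body, and the illuminating cones of distinct boundary points have disjoint interiors (if $z$ illuminates both $x$ and $y$ then a short segment from $x$ in direction $z$ and one from $y$ in direction $z$ both enter $\Inter B$, which is fine — so instead I would use that $x+\IllumSet(B,x)$ and $y+\IllumSet(B,y)$, as local cones at distinct points, cannot both be nearly the full orthant without $B$ failing to be convex, since the segment $[x,y]\subset B$ forces a nontrivial supporting-hyperplane constraint at one of the endpoints). Made quantitative, this says one of the two solid angles exceeds $2^{-n}$ by a definite amount depending only on $n$ and on how far apart $x,y$ are relative to the neighborhood size.

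To run the compactness argument cleanly I would instead rescale: replace $B_k$ near $v$ by the blow-up $\eta_k'^{-1}(B_k - v)$, so that $x_k,y_k$ become points in the unit $\ell_\infty^n$-ball, distinct, and the rescaled bodies still contain a half-space-like region (the blow-up of $B_\infty^n$ near $v$ converges to the orthant cone $v + \sum_i [0,\infty)(-v_i)e_i$... more precisely to the translate that makes $v$ the apex). Passing to a subsequential Hausdorff limit of the rescaled bodies and of the points $x_k\to x_\infty$, $y_k\to y_\infty$, one gets a limiting convex region containing the orthant cone, with two boundary points $x_\infty,y_\infty$, each with solid angle $\le 2^{-n}$. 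If $x_\infty\ne y_\infty$ this contradicts the rigidity fact (both cannot be corner-like); if $x_\infty=y_\infty$ one needs a second-order / normalized argument — here I would instead extract the limit of the \emph{unit vectors} $(x_k-y_k)/\|x_k-y_k\|_\infty\to u$ and use that the supporting hyperplane of $B_k$ at $x_k$ and at $y_k$ must each make an angle with $u$ of a controlled sign, forcing $\Gauss(B_k,x_k)$ or $\Gauss(B_k,y_k)$ to pick up a component transverse to the normal orthant, hence a solid angle bounded below by $2^{-n}(1+c(n))$ for the corresponding illuminating cone — contradicting $\kappa_k'\to 0$.

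\textbf{Main obstacle.} The delicate point is the degenerate case $x_\infty = y_\infty$ in the limit: a bare Hausdorff-compactness argument loses the information that $x_k$ and $y_k$ are distinct, so the quantitative gap $c(n)$ must be produced \emph{before} passing to the limit, via a genuinely quantitative convexity estimate relating the separation $\|x-y\|_\infty$, the neighborhood radius $\eta'$, and the excess solid angle. I expect this to require a careful but elementary computation: given $B\supset B_\infty^n$ and distinct $x,y\in\partial B$ near $v$, exhibit an explicit direction in $\IllumSet(B,x)\setminus(-\text{orthant at }v)$ (or for $y$), of solid-angle-cost $\gtrsim_n \|x-y\|_\infty/\eta'$, and then note that the constraint $x,y\in v+\eta'B_\infty^n$ together with $B_\infty^n\subset B$ bounds $\|x-y\|_\infty$ from below in terms of $\eta'$ once we also use that $x,y\notin\{-1,1\}^n$ is \emph{not} assumed — so in fact the right normalization is to choose $\eta'(n)$ small enough that the two corner regions $v+\eta'B_\infty^n$ are "resolved" by the cube's structure, and then the separation of $x$ and $y$ along the boundary automatically produces the gap. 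Getting the bookkeeping of these small parameters right, so that $\kappa'(n)$ comes out independent of everything except $n$, is the crux.
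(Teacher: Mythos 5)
Your proposal correctly identifies the structural difficulty --- a bare Hausdorff-compactness argument loses the constraint $x\neq y$ when $x_k,y_k$ collapse to the same limit point --- and you even gesture at the right remedy (track the unit direction $(x_k-y_k)/\|x_k-y_k\|_\infty$). But you then leave the ``genuinely quantitative convexity estimate'' as an acknowledged to-do rather than supplying it, and without that estimate the argument does not close. Your intermediate attempt via ``illuminating cones of distinct boundary points have disjoint interiors'' is false (as you notice and abandon), and the fallback --- that a supporting hyperplane constraint ``forces $\Gauss(B_k,x_k)$ or $\Gauss(B_k,y_k)$ to pick up a transverse component'' --- is exactly the assertion that needs a proof, not a heuristic. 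So there is a genuine gap: the proposal identifies where the work must happen but does not do it.

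The paper's proof supplies precisely the missing quantitative estimate, and does so directly rather than by a limiting argument on the whole configuration. Translate so the vertex $v$ is at the origin and extend the segment through $x-v$ and $y-v$ to a full chord of the unit sphere, with endpoints $p_1,p_2$. Since $\|x-v\|_\infty,\|y-v\|_\infty\leq\eta'$ the chord passes within $\eta'$ of the origin, so for opposite points $\|p-p_1\|_2^2+\|p-p_2\|_2^2=4$ forces $\max(\|p-p_1\|_2,\|p-p_2\|_2)\geq\sqrt{2}$, strictly larger than $\|p-e_i\|_2=\sqrt{2-2/\sqrt{n}}$, where $p=(1/\sqrt n,\dots,1/\sqrt n)$; a continuity argument upgrades this to a uniform gap $u(n)>0$ for chords merely near the origin. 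Because $B\supset B_\infty^n$, the closure of $\IllumSet(B,x)$ contains not only the directions $e_i-(x-v)$ (pointing to the cube's edges) but also the chord direction $p_2-(x-v)$, and likewise at $y$ with $p_1$. Whichever of $p_1,p_2$ is far from $p$, the corresponding point's illuminating cone contains a ``fat'' perturbed orthant plus one genuinely transverse extra generator, and a compactness estimate on solid angles of such cones (with apex allowed to wobble in a $\tau'$-ball) yields $\solid(\cdot)\geq(1+\kappa/2)2^{-n}$. That chord-to-sphere construction is the explicit ``direction of definite solid-angle cost'' your proposal calls for; producing it, and proving the antipodal-distance inequality behind it, is the actual content of the paper's argument.
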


Now, it is not difficult to verify that the claims imply the assertion of the lemma.
Indeed, fix any $0<\eta\leq \eta_{\text{\tiny\ref{l: pseudov}}}:=\eta'(n)$ and set
$\delta_{\text{\tiny\ref{l: pseudov}}}:=\min(\delta_0(n,\eta),\eta)/2$.
Let $B$ be a convex body in $\R^n$ in a $\star$--position, with $\dist(B,B_\infty^n)\leq 1+\delta_{\text{\tiny\ref{l: pseudov}}}$.
Observe that, in view of \eqref{eq: cube incl ctr}, we have $B\subset (1+2\delta_{\text{\tiny\ref{l: pseudov}}})B_\infty^n$.
Let $x$ be a point on the boundary of $B$ such that $\solid(B,x)\leq (1+\kappa_{\text{\tiny\ref{l: pseudov}}})2^{-n}$,
with $\kappa_{\text{\tiny\ref{l: pseudov}}}:=\kappa'(n)$ In view of Claim~\ref{claim1},
we have $\|x-v\|_\infty\leq \eta$
for some $v\in\{-1,1\}^n$. At the same time, for any other point $y\in\partial B\cap (v+\eta B_\infty^n)$
we have, in view of Claim~\ref{claim2}, $\solid(B,y)> (1+\kappa_{\text{\tiny\ref{l: pseudov}}})2^{-n}\geq \solid(B,x)$.
In other words, $x$ is the (unique) minimizer for $\solid(B,\cdot)$ in $\partial B\cap (v+\eta B_\infty^n)$.
The lemma follows. Now, we prove the claims.

\begin{proof}[Proof of Claim~\ref{claim1}]
For any point $x\in \R^n\setminus \Inter(B_{\infty}^n)$, define a convex cone
$K_x:=\{x+t z:\,z\in B_{\infty}^n,\, t\geq 0\}$, and define a function
$f(x): \R^n\setminus \Inter(B_{\infty}^n)\rightarrow \R$ by
$f(x):=\sigma(K_x, x)$. Observe that $f(x)$ is lower semi-continuous,
that is, for any sequence $(x^m)_{m\geq 1}$ in $\R^n\setminus \Inter(B_{\infty}^n)$ converging to a point $x$ we have
$
f(x)\leq \liminf_{m\rightarrow \infty} f(x^m)
$.
Now, for any $\delta>0$ and $0<\eta<1$ consider the set 
$$A(\eta, \delta):=\big((1+\delta) B_{\infty}^n\setminus \Inter(B_{\infty}^n)\big)
\cap\big\{y\in\R^n: \|y-v\|_{\infty}\geq \eta\;\mbox{for all}\; v\in\{-1,1\}^n \big\}.$$
In words, $A(\eta, \delta)$ is the set of points in the closed thin shell between the boundaries of cubes
$B_\infty^n$ and $(1+\delta)B_\infty^n$, with $\ell_\infty^n$--distance to $\{-1,1\}^n$ at least $\eta$.
Clearly, for any fixed $0<\eta<1$ the lower semi-continuity of $f(x)$ implies that
the limit $\lim\limits_{\delta\to 0}\min\limits_{x\in A(\eta,\delta)} f(x)$ exists (and is equal to $2^{-n+1}$).
Hence, there is $\delta_0=\delta_0(\eta)>0$
with $\min\limits_{x\in A(\eta,\delta_0)} f(x)\geq 1.5\cdot 2^{-n}$.
Now, for any convex body $B$ with $B_\infty^n\subset B\subset (1+\delta_0)B_\infty^n$
and $x\in\partial B$ with $\|x-v\|_\infty\geq \eta$ for all $v\in\{-1,1\}^n$, we have
$x\in A(\eta,\delta_0)$, whence $\sigma(B,x)\geq f(x)\geq \min\limits_{y\in A(\eta,\delta_0)} f(y)\geq 1.5\cdot 2^{-n}$.
The statement follows.
\end{proof}

\begin{proof}[Proof of Claim~\ref{claim2}.]
Let us make a few preliminary observations. First, the Euclidean distance from the point
$p=(\frac{1}{\sqrt{n}},\frac{1}{\sqrt{n}},\ldots,\frac{1}{\sqrt{n}})$ to each coordinate vector $e_i$ is
$$\|p-e_i\|_2=\sqrt{2-\frac{2}{\sqrt{n}}}.$$
On the other hand, given two opposite points 
$p_1$ and $p_2=-p_1$ on the unit sphere $\Sph^{n-1}$, we have
$$\|p-p_1\|_2^2+\|p-p_2\|_2^2=4.$$
Therefore,
$$\max(\|p-p_1\|_2, \|p-p_2\|_2)\geq \sqrt{2}>\|p-e_1\|_2.$$
By continuity, there are $\tau=\tau(n)>0$ and $u=u(n)>0$ such that for any two points $p_1'$ and $p_2'$ on the unit sphere
such that the line passing through $p_1'$ and $p_2'$ is at the $\ell_\infty^n$--distance at most $\tau$ from the origin,
we have $\max(\|p-p_1'\|_2, \|p-p_2'\|_2)\geq \|p-e_1\|_2+u$.
Now, for each $\widetilde p\in \Sph^{n-1}$ denote by $K_{\widetilde p}$
the convex cone generated by vectors $e_1,e_2,\dots,e_n$ and $\widetilde p$.
It is clear that there is $\kappa=\kappa(n)>0$ such that
for any $\widetilde p\in \Sph^{n-1}$ with $\|\widetilde p-p\|_2\geq \|p-e_1\|_2+u$, we have
$\solid(K_{\widetilde p},0)\geq (1+\kappa)2^{-n}$.
Using a compactness argument, we infer that a slighly weaker inequality should hold in a small neighborhood of zero,
namely, there is $\tau'=\tau'(n)>0$ such that for any $z\in \tau'\,B_\infty^n$ and any point $\widetilde p$
in $\Sph^{n-1}$ with $\|\widetilde p-p\|_2\geq \|p-e_1\|_2+u$ we have that the solid angle of $z$
considered as the vertex of the convex cone generated by $e_i-z$ ($i=1,2,\dots,n$) and $\widetilde p-z$,
is at least $(1+\kappa/2)2^{-n}$.

Let us summarize. Let $\ell$ be an affine line in $\R^n$ and $z$ be a point on that line such that $z\in \min(\tau,\tau')B_\infty^n$
(with $\tau$ and $\tau'$ defined above). Further, let $p_1$ and $p_2$ be intersection points of this line with the unit sphere,
and take $p_j$ ($j\in\{1,2\}$) having the larger Euclidean distance from $p=(\frac{1}{\sqrt{n}},\frac{1}{\sqrt{n}},\ldots,\frac{1}{\sqrt{n}})$.
Let $K$ be the convex cone with vertex at $z$ generated by vectors $e_i-z$ ($i=1,2,\dots,n$) and $p_j-z$.
Then the solid angle of $z$ (w.r.t.\ $K$) is bounded fom below by $(1+\kappa/2)2^{-n}$.

The above assertion allows to complete the proof of the claim.
Let $x$ and $y$ be two distinct boundary points of a convex body $B\supset B_\infty^n$,
with $x-v,y-v\in \min(\tau,\tau')B_\infty^n$
for some vertex $v$ of $B_\infty^n$. Without loss of generality, we can assume that $v=(-1,-1,\ldots,-1)$.
Let $\ell$ be the line passing through $x-v$ and $y-v$, and let $p_1,p_2$ be intersection points of the
line with the unit sphere (let us assume for concreteness that $y-v$ lies in the interval joining $x-v$ and $p_2$).
Note that, since $B$ contains the cube and, in particular, points of the form
$v+e_i$, $i=1,2,\dots,n$, we get that $e_i-(x-v)$, $i=1,2,\dots,n$ lie in the closure of the illuminating cone $\IllumSet(B,x)$,
and the same is true for vector $p_2-(x-v)$. By analogy, the closure of the cone $\IllumSet(B,y)$
contains the vectors $e_i-(y-v)$ and $p_1-(y-v)$. Take $p_j$ having larger Euclidean distance to
$p=(\frac{1}{\sqrt{n}},\frac{1}{\sqrt{n}},\ldots,\frac{1}{\sqrt{n}})$.
If $j=1$ then, by the above reasoning, we have
$$\solid(B,y)\geq\solid\big(\conv\{p_1-(y-v),e_1-(y-v),\dots,e_n-(y-v)\},y-v\big)\geq (1+\kappa/2)2^{-n}.$$
Similarly, if $j=2$ then $\solid(B,x)\geq (1+\kappa/2)2^{-n}$. The claim follows.
\end{proof}

\subsection{Proof of Lemma~\ref{l: solid}}

\begin{Claim}\label{cl1}
Fix parameters $n>2$, $\theta\in(0,1)$, assume that $0<\varepsilon\leq \theta/2$
and $0<\delta\leq \varepsilon/7$, take a convex body $B$ in a $\star$-position with $\dist(B,B_\infty^n)\leq 1+\delta$,
and pick any set $S\in \Class_n(\varepsilon,\theta)$.
Let $v,v'$ be the distinguished pair of vertices with respect to $S$.
Then
for any $x\in\partial B$ with $x\notin (v+9\delta B_\infty^n)\cup(v'+9\delta B_\infty^n)$,
$x$ is illuminated by $S$.
\end{Claim}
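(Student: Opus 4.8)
The plan is to prove Claim~\ref{cl1} as follows. First I would put $B$ in convenient form: being in a $\star$-position with $\dist(B,B_\infty^n)\le1+\delta$ gives $B_\infty^n\subseteq B$ and, by \eqref{eq: cube incl ctr}, $B\subseteq(1+2\delta)B_\infty^n$, so every $x\in\partial B$ satisfies $1\le\|x\|_\infty\le1+2\delta$ (the lower bound because $\Inter B_\infty^n\subseteq\Inter B$). The only illumination mechanism I will use is the trivial one: if $d\in\R^n$ and $t>0$ satisfy $\|x+td\|_\infty<1$, then $x+td\in\Inter B_\infty^n\subseteq\Inter B$, and since $x$ is a boundary point of the convex body $B$ the half-open segment $(x,x+td]$ lies in $\Inter B$, so $d$ illuminates $x$. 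Thus the whole task reduces to: for each $x\in\partial B$ with $x\notin(v+9\delta B_\infty^n)\cup(v'+9\delta B_\infty^n)$, exhibit $d\in S$ and $t>0$ with $\max_i|x_i+td_i|<1$.

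Let $\ell$ be the coordinate with $v_\ell=-v'_\ell$ (so $v_j=v'_j$ for $j\ne\ell$), and call $k$ a \emph{large} coordinate of $x$ if $|x_k|\ge1-9\delta$; there is at least one (since $\|x\|_\infty\ge1$), and each large coordinate satisfies $|x_k-\sign x_k|\le9\delta$. I would split on the sign pattern of $x$. Suppose first there is $k\ne\ell$ with $x_kv_k\le0$. Then a sign vector $w\in\{-1,1\}^n\setminus\{v,v'\}$ can be chosen with $w_k=\sign x_k$ on every coordinate where $x_k\ne0$ (commit to the signs of $x$; the present hypothesis leaves enough freedom on the zero coordinates, if any, to avoid $v$ and $v'$). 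As $w\notin\{v,v'\}$, the set $S$ contains the vector $d$ that the definition of $\Class_n(\varepsilon,\theta)$ attaches to $w$: $d\in-w+\varepsilon B_\infty^n$ if $w$ is adjacent to neither $v$ nor $v'$, and $d\in-\sum_{j\ne i}w_je_j-\theta w_ie_i+\varepsilon B_\infty^n$ with $i=i(w)$ if $w$ is adjacent to $v$ or to $v'$. Since $x_j=w_j|x_j|$ for every $j$, $d$ drives each coordinate of $x$ toward the origin, and I expect $t:=8\delta/\theta$ to work: writing $d_j=-w_j+\xi_j$ for $j\ne i$, $d_i=-\theta w_i+\xi_i$ with $\|\xi\|_\infty\le\varepsilon$, one gets $|x_j+td_j|\le\max(t,\,1+2\delta-t)+t\varepsilon$ for $j\ne i$ and $|x_i+td_i|\le(1-6\delta)+t\varepsilon$, both $<1$ by the hypotheses $\delta\le\varepsilon/7\le\theta/14$ and $\varepsilon\le\theta/2$.

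In the complementary case $x_kv_k>0$ for all $k\ne\ell$, $x$ agrees in sign with $v$ off coordinate $\ell$; let $\hat v\in\{v,v'\}$ be the one agreeing with $x$ also on coordinate $\ell$ when $\ell$ is large, and $\hat v:=v$ otherwise, so $x$ agrees in sign with $\hat v$ on every large coordinate. If some coordinate $k\ne\ell$ is \emph{not} large, I would flip it: take $w$ to be $\hat v$ with the $k$-th sign reversed. Then $w\in\{-1,1\}^n\setminus\{v,v'\}$ is adjacent to $\hat v$ with $i(w)=k$, the associated $d\in S$ exists, and it illuminates $x$ with $t:=5\delta$; the verification repeats the previous one except at coordinate $k$, where the outward push $t|d_k|$ (of order $\delta\theta$) stays within the slack $1-|x_k|>9\delta$. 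If instead every coordinate $\ne\ell$ is large, then either $\ell$ is large too, whence $|x_k-\hat v_k|\le9\delta$ for all $k$, i.e.\ $x\in\hat v+9\delta B_\infty^n\subseteq(v+9\delta B_\infty^n)\cup(v'+9\delta B_\infty^n)$, contradicting the hypothesis on $x$; or $|x_\ell|<1-9\delta$, and then I would use the distinguished direction $\specdir$, whose coordinates satisfy $\specdir_j\in-v_j[\theta,1]+[-\varepsilon,\varepsilon]$ for $j\ne\ell$ (so opposite in sign to $x_j$ on the large coordinates) and $|\specdir_\ell|\le\varepsilon$, again with $t:=8\delta/\theta$: then $|x_j+t\specdir_j|\le(1-6\delta)+t\varepsilon<1$ for $j\ne\ell$ and $|x_\ell+t\specdir_\ell|\le(1-9\delta)+t\varepsilon<1$.

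The routine but genuinely delicate part is this coordinatewise estimation: the tilted coordinate of $d$ (and every non-$\ell$ coordinate of $\specdir$) has modulus only of order $\theta$, which forces $t$ of order $\delta/\theta$ in order to pull a coordinate with $|x_i|$ as large as $1+2\delta$ strictly back inside $[-1,1]$, and one must then check that such a $t$ does not push any other coordinate past $\pm1$. This is exactly where the quantitative hypotheses $0<\delta\le\varepsilon/7$ and $\varepsilon\le\theta/2$ are consumed and where the constant $9\delta$ in the statement is calibrated; I do not expect any conceptual difficulty beyond keeping this bookkeeping honest.
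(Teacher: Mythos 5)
Your proposal is correct, and the strategy is essentially the one the paper uses: choose a sign vector $w$ matching the sign pattern of $x$, pick the element of $S$ attached to $w$ (or the distinguished direction when $w\in\{v,v'\}$), and verify coordinatewise that $x+td$ lands in the open cube $\Inter B_\infty^n\subset\Inter B$. The only organizational differences are cosmetic: you merge the paper's Cases 1 and 2 by using the single step size $t=8\delta/\theta$ for both adjacent and non-adjacent $w$ (the paper uses $3\delta/(1-\varepsilon)$ and $3\delta/(\theta-\varepsilon)$ separately), and within the case $w\in\{v,v'\}$ you first scan for a small coordinate $k\neq\ell$ before falling back to the distinguished direction, whereas the paper tries the distinguished direction first and then resorts to the $s^k$'s; both orderings cover the same territory and yield the same contradiction ``$x\in\hat v+9\delta B_\infty^n$'' in the remaining case.

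One small presentational gap worth tightening if you write this up: in your sub-case B1 with $t=5\delta$, the estimate at coordinate $\ell$ needs a word when $\ell$ is not large (so $\hat v=v$ and $\sign x_\ell$ may be $-v_\ell$), since then the direction pushes $x_\ell$ outward; the bound $|x_\ell|+t(1+\varepsilon)<(1-9\delta)+5\delta(1+\varepsilon)<1$ closes it, but it is not literally the ``same verification'' as the other $j\neq k$.
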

\begin{proof}

Pick any point $x\in \partial B$, and let $w$ be a vertex of the cube
$B_{\infty}^n$ with the smallest distance to $x$; note that for all non-zero coordinates $x_j$
we have $\sign (w_j)=\sign (x_j)$. The choice of $w$ is not unique if $x$ has zero coordinates; in such situation
we pick any admissible vertex.

We will consider three cases.

\noindent\textit{Case 1: $w$ is adjacent neither to $v$, nor to $v'$.}
In this case, $S$ contains an element $s=-w+\varepsilon y$, for some $y\in B_{\infty}^n$.
By~\eqref{eq: cube incl ctr}, $B\subset (1+2\delta) B_{\infty}^n$, and we get for every $j=1,\dots,n$, and for any $a\in(0,1/2]$:
$$|(x+a(-w+\varepsilon y))_j|\leq 
|x_j-aw_j|+a\varepsilon
\leq
\max(1+2\delta-a,a)+a\varepsilon= 1+2\delta-a+a\varepsilon.$$
Selecting $a:=3\delta/(1-\varepsilon)$ we obtain that $x+as$ is in the interior of $B_{\infty}^n$.
Since $B_{\infty}^n\subset B$, it implies that $x+a s\in B\setminus\partial B$,
and therefore $x$ is illuminated by $s$.

\smallskip

\noindent\textit{Case 2: $w$ is adjacent either to $v$ or to $v'$, but $w\notin\{v,v'\}$.}
Let $i$ be the coordinate in which $v_i=v_i'\neq w_i$.
In this case, $S$ contains an element $s=-w+(1-\theta) w_i e_i +\varepsilon y$,
for some $y\in B_{\infty}^n$. As $B\subset (1+2\delta) B_{\infty}^n$,
we get for every $j\in[n]\setminus\{i\}$, and for any $a\in(0,1/2]$:
$$|(x+a(-w+(1-\theta) w_i e_i +\varepsilon y))_j|\leq 1+2\delta-a+a\varepsilon.$$
In addition,
$$|(x+a(-\theta w_i e_i+\varepsilon y))_i|\leq 1+2\delta-a\theta+a\varepsilon.$$
Selecting $a:=\frac{3\delta}{\theta-\varepsilon}$ and using the assumptions on $\delta$ and $\varepsilon$,
we obtain that $x+as\in B\setminus\partial B$. 
As before, it means that $x$ is illuminated by $s\in S$.

\smallskip

\noindent\textit{Case 3: $w$ is either $v$ or $v'$.}
By construction of $\Class_n(\varepsilon,\theta)$, $S$ contains an element
$$s\in -\prod_{j=1}^n \Big(\frac{v_j+v'_j}{2}\cdot[\theta,1]\Big)
+\varepsilon B_{\infty}^n.$$
Let $i$ be the coordinate in which $v$ and $v'$ differ (note that it is not the same $i$ as in the previous case).
Note that $\sign (x_j)=-\sign (s_j)$ for all non-zero coordinates $x_j$ with $j\in[n]\setminus \{i\}$.
We then get for every $j\in[n]\setminus\{i\}$ and for any $a\in(0,1/2]$:
$$|(x+as)_j|\leq 1+2\delta-a\theta+a\varepsilon.$$
In addition,
$$|(x+as)_i|\leq |x_i|+a\varepsilon.$$
Selecting $a:=\frac{3\delta}{\theta-\varepsilon}$, we obtain that $x+as\in B\setminus\partial B$
{\it unless} $|x_i|\geq 1-3\delta$.
Thus, $x$ is illuminated in the direction $s$ whenever $|x_i|< 1-3\delta$.
Now, assume that $|x_i|\geq 1-3\delta$; without loss of generality, $\sign (x_i)=\sign (v_i)$.
Similarly to Case 2, we note that $S$ contains directions
$s^k=-w^k+(1-\theta) w_k^k e_k +\varepsilon y^k$ ($k\in[n]\setminus\{i\}$),
where $y^k\in B_\infty^n$ and for each $k\in[n]\setminus\{i\}$, $w^k$ is the vertex of $\{-1,1\}^n$
adjacent to $v$ that differs from $v$ on the $k$-th coordinate.
Fix $k\in[n]\setminus\{i\}$.
For any $j\notin[n]\setminus\{k\}$ we have, just as in Case 2,
$$|(x+a(-w^k+(1-\theta) w_k^k e_k +\varepsilon y))_j|\leq 1+2\delta-a+a\varepsilon<1,$$
where the last inequality holds, for example, with $a:=3\delta/(1-\varepsilon)$.
Further, trivially
$$|(x+a(-w^k+(1-\theta) w_k^k e_k +\varepsilon y))_k|\leq |x_k|+a\theta+a\varepsilon,$$
and, with the last choice of $a$, the quantity is strictly less than $1$ whenever $|x_k|<1-9\delta$.
Thus, we get that $x$ is illuminated by one of the $n$ directions $\{s,s^k,\,k\in[n]\setminus\{i\}\}$,
whenever there is a coordinate of $x$ which is strictly less than $1-9\delta$ by absolute value.
The result follows.
\end{proof}

\begin{Claim}\label{clclmid}
For any $n>2$ and any $\varepsilon\in(0,1)$ there is $\eta'=\eta'(n,\varepsilon)\in(0,1)$ with the following property.
Let $x\in\Sph^{n-1}$ be a vector such that $x_j\geq \varepsilon$ and $x_k\leq -\varepsilon$
for some $j\neq k$. Then there is a vector $z\in\Sph^{n-1}$ with $z_i\geq\eta'$ for all $i\leq n$,
an such that $\langle z,x\rangle=0$.
\end{Claim}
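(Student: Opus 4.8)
The plan is to construct the desired $z$ explicitly by starting from the "obvious" candidate --- the vector proportional to the all-ones vector projected onto the hyperplane $x^\perp$ --- and then verifying that all its coordinates are bounded below by a quantity depending only on $n$ and $\varepsilon$. First I would set $z_0 := \mathbf{1} - \langle \mathbf{1}, x\rangle\, x$, where $\mathbf 1 = (1,1,\dots,1)$, so that by construction $\langle z_0, x\rangle = 0$. Then $z := z_0 / \|z_0\|_2$ (provided $z_0 \neq 0$), and it remains to show that each coordinate $z_i = \bigl(1 - \langle \mathbf 1, x\rangle\, x_i\bigr) / \|z_0\|_2$ is positive and bounded away from zero uniformly.

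Next I would bound the numerator and denominator separately. For the numerator, write $s := \langle \mathbf 1, x\rangle = \sum_j x_j$; by Cauchy--Schwarz $|s| \le \sqrt n$ since $\|x\|_2 = 1$, while each $|x_i| \le 1$. The subtle point is that $1 - s\, x_i$ could be small if $s$ is close to $+\sqrt n$ (forcing $x$ close to $\mathbf 1/\sqrt n$) and some $x_i$ close to $1/\sqrt n$. But the hypothesis rules this out: since $x_k \le -\varepsilon < 0$ for some $k$, the vector $x$ stays a definite distance from the positive ray through $\mathbf 1$, which forces $s$ to be bounded strictly away from $\sqrt n$. Concretely, I would argue that $\|x - (s/n)\mathbf 1\|_2^2 = 1 - s^2/n \ge \varepsilon^2$ is false in general, so instead I would use the coordinate $x_k \le -\varepsilon$ directly: writing $x = \frac s n \mathbf 1 + x^\perp$ with $x^\perp \perp \mathbf 1$, the component $x_k^\perp = x_k - s/n \le -\varepsilon - s/n$, and if $s$ were too close to $\sqrt n$ then $\|x^\perp\|_2^2 = 1 - s^2/n$ would be too small to accommodate $|x_k^\perp|$; more simply, $1 - s^2/n = \sum_j (x_j - s/n)^2 \ge (x_k - s/n)^2$. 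Combining this with $x_j \le -\varepsilon$ gives a lower bound $1 - s^2/n \ge c(n,\varepsilon) > 0$, hence an upper bound $s \le \sqrt{n(1 - c)} =: s_{\max} < \sqrt n$, and also a lower bound on $\|z_0\|_2^2 = n - s^2 \ge n - s_{\max}^2 > 0$, which simultaneously handles the denominator being nonzero and shows $z$ is well-defined.

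Then I would bound the numerator $1 - s x_i$ from below for each $i$. If $s \le 0$ this is at least $1$ since $x_i \ge -1$... wait, that needs $s x_i \le 0$, which holds when $\mathrm{sign}(x_i) = \mathrm{sign}(s)$ is violated; more carefully, $1 - s x_i \ge 1 - |s|\,|x_i| \ge 1 - |s|$, which is useful only when $|s| < 1$. For the regime $1 \le s \le s_{\max}$ (the case $s \le -1$ being symmetric after noting the problem is not symmetric, so I would instead handle it by $1 - s x_i \ge 1 - s_{\max} \cdot (\text{something})$), I would use that $x_i = \frac s n + x_i^\perp$ and $|x_i^\perp| \le \|x^\perp\|_2 = \sqrt{1 - s^2/n}$, so $s x_i \le \frac{s^2}{n} + s\sqrt{1 - s^2/n} \le \frac{s_{\max}^2}{n} + \sqrt n \cdot \sqrt{1 - 1/n}$, and I would need this to be $< 1$; this is the genuinely delicate inequality and I expect it to be the main obstacle, requiring the sharper geometric input that $x$ has a coordinate $\le -\varepsilon$ rather than just $s \le s_{\max}$. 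The cleanest route is probably to prove directly that $\langle z_0, e_i \rangle = z_0{}_i > 0$: equivalently $\langle \mathbf 1 - s x, e_i\rangle > 0$, i.e. the angle between $e_i$ and the hyperplane-projection of $\mathbf 1$ is acute, which is plausible because $e_i$, $\mathbf 1$, and the "bad directions" $x$ all lie in a common region, but making this quantitative uniformly in $i$ is where the compactness/continuity argument (as used elsewhere in the paper, e.g.\ in the proof of Claim~\ref{claim2}) would step in: the set of admissible $x$ (those in $\Sph^{n-1}$ with some $x_j \ge \varepsilon$ and some $x_k \le -\varepsilon$) is compact, the function $x \mapsto \min_i z_0{}_i(x)$ is continuous on it, so it suffices to show this minimum is positive pointwise, and then $\eta'(n,\varepsilon)$ is obtained as (the minimum value) divided by $\sup \|z_0\|_2 \le \sqrt n$. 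Pointwise positivity of $z_0{}_i = 1 - s x_i$ I would establish by the explicit estimates above, splitting on the sign and size of $s$.
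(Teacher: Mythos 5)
Your proposed construction has a genuine flaw: the vector $z_0 = \mathbf{1} - \langle \mathbf 1, x\rangle\, x$, i.e.\ the orthogonal projection of the all-ones vector onto $x^\perp$, does \emph{not} in general have all positive coordinates, even for $x$ satisfying the hypotheses of the claim. For a concrete counterexample take $n=4$, $\varepsilon = 0.1$, and
$$x = \big(0.9,\ 0.1,\ \sqrt{0.17},\ -0.1\big),$$
so that $\|x\|_2 = 1$, $x_1 \geq \varepsilon$ and $x_4 \leq -\varepsilon$. Here $s = \langle\mathbf 1, x\rangle \approx 1.312$, and $1 - s\, x_1 \approx 1 - 1.18 = -0.18 < 0$. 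So the ``main obstacle'' you flag is not merely delicate but impassable for this candidate: the first coordinate of $z_0$ is negative. Consequently the compactness argument you fall back on cannot rescue the approach either, since pointwise positivity fails; compactness would only upgrade a true pointwise bound to a uniform one, it cannot manufacture positivity where there is none.

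The paper's proof sidesteps this entirely by \emph{not} projecting $\mathbf 1$ onto $x^\perp$. Instead it starts from the all-ones vector and modifies only the two coordinates $j$ and $k$ where the hypothesis gives sign and size control: with $a := \sum_{i\neq j,k} x_i$, one sets $\widetilde z_i := 1$ for $i\neq j,k$, $\widetilde z_j := \sqrt n / x_j$, and $\widetilde z_k := (\sqrt n + a)/(-x_k)$. This gives $\langle \widetilde z, x\rangle = 0$ by design, and the Cauchy--Schwarz bound $a \geq -\sqrt n\sqrt{1-2\varepsilon^2}$ (using $x_j^2, x_k^2 \geq \varepsilon^2$) shows $\widetilde z_k > 0$ and bounded below, while the untouched coordinates remain equal to $1$. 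Normalizing then yields $\eta'(n,\varepsilon)$. If you want to salvage your own plan, you need a candidate vector that, unlike the $\mathbf 1$-projection, concentrates the correction on the coordinates where the hypothesis keeps you safe — which is exactly what the paper's $\widetilde z$ does.
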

\begin{proof}
Denote $a:=\sum_{i\neq j,k} x_i$.
First, let us define a vector $\widetilde z\in\R^n$ be setting $\widetilde z_i:=1$ for all $i\neq j,k$ and
$$\widetilde z_j:=\frac{\sqrt{n}}{x_j}>1;\quad \widetilde z_k:=\frac{\sqrt{n}+a}{-x_k}.$$
First, obviously $\langle z,x\rangle=0$ by the construction.
Further, by the Cauchy--Schwarts inequality, $a\geq -\sqrt{n}\sqrt{1-2\varepsilon^2}$ (where we used that $x$ is a unit vector).
Thus, all coordinates of $\widetilde z$ are greater than $\min(1,\sqrt{n}(1-\sqrt{1-2\varepsilon^2}))$.
It remains to choose $z:=\widetilde z/\|\widetilde z\|_2$.
\end{proof}

The next two claims can be verified with a usual compactness argument.
\begin{Claim}\label{newclaim}
For any $n>2$ and $\kappa\in(0,1)$ there is $\psi'=\psi'(n,\kappa)\in(0,1)$ with the following property.
Let $z^1,z^2,\dots,z^n$ be vectors in $\R^n$ such that $\|z^i-e_i\|_\infty\leq \psi'$.
Consider a convex cone
$$K:=\big\{x\in\R^n:\, \langle x, z^i\rangle\geq 0\;\;\mbox{for all }i\leq n\big\}.$$
Then
$$\solid(K,0)\leq (1+\kappa)\cdot 2^{-n}.$$
\end{Claim}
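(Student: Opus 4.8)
The plan is to use that $K$ is a small perturbation of the positive orthant $O:=\{x\in\R^n:\ x_i\geq 0\ \text{for all }i\leq n\}$, whose solid angle equals exactly $2^{-n}$, and to bound the excess volume directly; a compactness/contradiction argument along the lines of Claim~\ref{claim1} would work equally well. First I would record the elementary pointwise estimate: for $x\in B_2^n$ and $i\leq n$, by H\"older's inequality and $\|x\|_2\leq 1$,
$$|\langle x,z^i-e_i\rangle|\leq \|x\|_1\,\|z^i-e_i\|_\infty\leq \sqrt n\,\psi',$$
so $\langle x,z^i\rangle$ differs from $x_i=\langle x,e_i\rangle$ by at most $\sqrt n\,\psi'$. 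Consequently, if $x\in K\cap B_2^n$ then $x_i\geq \langle x,z^i\rangle-\sqrt n\,\psi'\geq -\sqrt n\,\psi'$ for every $i$, that is,
$$K\cap B_2^n\ \subset\ A(\psi'):=\big\{x\in B_2^n:\ x_i\geq -\sqrt n\,\psi'\ \text{for all }i\leq n\big\}.$$

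Next I would control $\Vol_n(A(\psi'))$. The set difference $A(\psi')\setminus O$ is contained in the union of the $n$ slabs $\{|x_i|\leq \sqrt n\,\psi'\}\cap B_2^n$, each of $n$-dimensional volume at most $2\sqrt n\,\psi'\,\Vol_{n-1}(B_2^{n-1})$; hence $\Vol_n(A(\psi'))\leq \Vol_n(O\cap B_2^n)+C(n)\,\psi' = 2^{-n}\Vol_n(B_2^n)+C(n)\,\psi'$ for an explicit constant $C(n)$. (Equivalently, the indicators $\mathbf 1_{A(\psi')}$ decrease pointwise, off a Lebesgue-null set, to $\mathbf 1_{O\cap B_2^n}$, so dominated convergence gives $\Vol_n(A(\psi'))\to 2^{-n}\Vol_n(B_2^n)$ as $\psi'\to 0^+$.) I would then simply choose $\psi'=\psi'(n,\kappa)\in(0,1)$ small enough that $C(n)\,\psi'\leq \kappa\,2^{-n}\,\Vol_n(B_2^n)$. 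For this choice,
$$\solid(K,0)=\frac{\Vol_n(K\cap B_2^n)}{\Vol_n(B_2^n)}\leq \frac{\Vol_n(A(\psi'))}{\Vol_n(B_2^n)}\leq (1+\kappa)\,2^{-n},$$
which is exactly the required conclusion.

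There is no serious obstacle here; the only point that deserves a word is the limit $\Vol_n(A(\psi'))\to 2^{-n}\Vol_n(B_2^n)$, which is handled either by the slab bound above or by dominated convergence. If one prefers to avoid even that computation, the statement follows verbatim from a compactness argument: were it false, there would be normals $z^i_m\to e_i$ with $\solid(K_m,0)>(1+\kappa)2^{-n}$ for all $m$, yet $\mathbf 1_{K_m\cap B_2^n}\to \mathbf 1_{O\cap B_2^n}$ almost everywhere forces $\solid(K_m,0)\to 2^{-n}$, a contradiction.
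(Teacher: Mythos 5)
Your proof is correct. The paper itself does not supply a proof of this claim: it only remarks that Claims~\ref{newclaim} and~\ref{newwclaim2} ``can be verified with a usual compactness argument.'' Your second, parenthetical argument (passing to a subsequence $z^i_m\to e_i$, noting that $\mathbf{1}_{K_m\cap B_2^n}\to\mathbf{1}_{O\cap B_2^n}$ off the coordinate hyperplanes, and invoking dominated convergence) is exactly that intended compactness route. Your primary argument --- the inclusion $K\cap B_2^n\subset A(\psi')$ via $\|x\|_1\le\sqrt n\|x\|_2$ and H\"older, followed by the slab bound $\Vol_n\bigl(A(\psi')\setminus O\bigr)\le 2n^{3/2}\psi'\Vol_{n-1}(B_2^{n-1})$ --- is a genuinely more elementary and quantitative alternative: it produces an explicit admissible value of $\psi'(n,\kappa)$ rather than a purely existential one, which would matter if one wanted to make the $\delta(n)$ of the main theorem effective (the authors explicitly note they avoid doing so for clarity). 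Both steps check out: every $x\in A(\psi')\setminus O$ has some $x_j\in[-\sqrt n\,\psi',0)$, hence lies in a slab of width $2\sqrt n\,\psi'$ whose cross-sections are contained in $B_2^{n-1}$, and $\Vol_n(O\cap B_2^n)=2^{-n}\Vol_n(B_2^n)$ gives the conclusion after dividing by $\Vol_n(B_2^n)$.
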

\begin{Claim}\label{newwclaim2}
For any $n>2$ and $\eta\in(0,1)$ there is $\psi''=\psi''(n,\eta)\in(0,1)$ with the following property.
Let $\widetilde z^1,\widetilde z^2,\dots,\widetilde z^n$ be vectors in $\R^n$ such that $\|\widetilde z^i-e_i\|_\infty\leq \psi''$.
Consider the convex cone $\widetilde K$ generated by vectors $\widetilde z^i$ ($i\leq n$).
Then any unit vector $f\in\R^n$ with $f_i\geq \eta$ for all $i=1,2,\dots,n$, lies in the interior of $\widetilde K$.
\end{Claim}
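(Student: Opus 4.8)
The plan is to reduce the claim to an elementary perturbation estimate for the matrix whose columns are the vectors $\widetilde z^i$. Write $Z$ for the $n\times n$ matrix with columns $\widetilde z^1,\dots,\widetilde z^n$, so that $\widetilde K=\{Z\lambda:\ \lambda\in\R^n,\ \lambda_i\geq 0\ \text{for all }i\}$. When $\psi''=0$ one has $Z=\Id$ and $\widetilde K$ is the nonnegative orthant, whose interior is $\{x:\ x_i>0\ \text{for all }i\}$; any unit vector $f$ with $f_i\geq\eta$ lies in this interior, in fact at $\ell_\infty^n$-distance at least $\eta$ from its boundary. I would show that this robustness survives a small perturbation of $Z$.

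First, since each column of $Z-\Id$ has $\ell_\infty^n$-norm at most $\psi''$, we get $\|Z-\Id\|_{\infty\to\infty}\leq n\psi''$; choosing $\psi''$ so that $n\psi''\leq 1/2$ makes $Z$ invertible, and a Neumann-series bound yields $\|Z^{-1}-\Id\|_{\infty\to\infty}\leq 2n\psi''$. Now let $f$ be any unit vector with $f_i\geq\eta$ for all $i$; since $\|f\|_\infty\leq\|f\|_2=1$, the vector $\lambda:=Z^{-1}f$ satisfies $\|\lambda-f\|_\infty=\|(Z^{-1}-\Id)f\|_\infty\leq 2n\psi''$. Setting $\psi''=\psi''(n,\eta):=\min(1/(2n),\ \eta/(4n))$ forces $\|\lambda-f\|_\infty\leq\eta/2$, hence $\lambda_i\geq f_i-\eta/2\geq\eta/2>0$ for every $i$.

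It then remains to observe that, being an invertible linear map, $Z$ is a homeomorphism of $\R^n$ sending $\Inter\{x:\ x_i\geq 0\ \text{for all }i\}=\{x:\ x_i>0\ \text{for all }i\}$ onto $\Inter\widetilde K$; since $\lambda$ has all coordinates positive and $f=Z\lambda$, we conclude $f\in\Inter\widetilde K$. Equivalently, one may run the "usual compactness" argument: were the statement false, there would be $\psi''_m\to 0$, vectors $\widetilde z^i_m\to e_i$, and unit vectors $f_m$ with $(f_m)_i\geq\eta$ and $f_m\notin\Inter\widetilde K_m$, hence unit functionals $u_m$ with $\langle u_m,\widetilde z^i_m\rangle\leq 0$ for all $i$ and $\langle u_m,f_m\rangle\geq 0$; passing to convergent subsequences $u_m\to u$, $f_m\to f$ gives $u\leq 0$ coordinatewise with $u\neq 0$ and $\langle u,f\rangle\geq 0$, contradicting $f_i\geq\eta>0$.

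I do not anticipate a genuine obstacle. The only care needed is the standard one: invertibility of $Z$ for small $\psi''$, the quantitative control of $\|Z^{-1}-\Id\|$, and the elementary fact that the interior of an invertible linear image of the orthant is the image of the open orthant. Working with $\|\cdot\|_{\infty\to\infty}$ instead of the spectral norm keeps the hypothesis and the estimates aligned, and the resulting $n$-dependent losses are harmless since $\psi''$ is permitted to depend on $n$.
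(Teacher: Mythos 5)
Your proposal is correct. The paper does not actually supply a proof for this claim; it only says Claims~4 and~5 "can be verified with a usual compactness argument," which is essentially what you sketch in your final paragraph (separating functional $u_m$, pass to a subsequential limit, contradict $f_i\geq\eta>0$). Your primary argument --- writing $\widetilde K$ as the image of the nonnegative orthant under $Z$, bounding $\|Z-\Id\|_{\infty\to\infty}\leq n\psi''$ and hence $\|Z^{-1}-\Id\|_{\infty\to\infty}\leq 2n\psi''$ via a Neumann series, and then checking that $\lambda=Z^{-1}f$ has all coordinates at least $\eta/2$ --- is a genuinely different, constructive route. It buys an explicit $\psi''(n,\eta)=\min\bigl(1/(2n),\,\eta/(4n)\bigr)$ rather than an existence statement extracted from a contradiction; this is precisely the kind of effectivization the authors remark they chose to avoid for clarity, so your version would be a natural choice if one wanted to trace an explicit $\delta(n)$ through the main theorem. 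One small thing to spell out if you adopt the linear-algebra route: that $Z(\Inter\R^n_{\geq 0})=\Inter\widetilde K$ requires $Z$ to be invertible (which your bound $n\psi''\leq 1/2$ guarantees), and that $\widetilde K=Z(\R^n_{\geq 0})$ uses that the generating set $\{\widetilde z^i\}$ is exactly the image of $\{e_i\}$; both are immediate but worth one line.
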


\begin{proof}[{Proof of Lemma~\ref{l: solid}}]
Fix parameters $n>2$ and $\kappa\in(0,1)$, and let $\eta'(\cdot,\cdot)$,
$\psi'(\cdot,\cdot)$ and $\psi''(\cdot,\cdot)$ be as in Claims~\ref{clclmid},
\ref{newclaim}
and~\ref{newwclaim2}, respectively.
Define $\psi:=\psi'(n,\kappa)/(4n)$ and $\theta_{\text{\tiny\ref{l: solid}}}:=\psi/8$.
Now, take any $\theta\in(0,\theta_{\text{\tiny\ref{l: solid}}}]$, and let
$$\varepsilon=\varepsilon_{\text{\tiny\ref{l: solid}}}:=\min(1/n^2,\theta\psi/4),\quad
\delta=\delta_{\text{\tiny\ref{l: solid}}}:=\min(\psi''(n,\eta'(n,\theta\psi/(8n)))/4.5,\varepsilon/7).$$
Further, take any element $S\in\Class_n(\varepsilon,\theta)$
and a convex body $B$ in a $\star$-position, with $B_\infty^n\subset B\subset (1+\delta)B_\infty^n$.
Our goal is to show that for any point $x$ on the boundary of $B$ not illuminated by $S$ we have
$\solid(B,x)\leq (1+\kappa)\cdot 2^{-n}$.

Let $v,v'$ be distinguished vertices of the standard cube $B_\infty^n$ with respect to $S$, and
assume that there is a point $x\in\partial B$ which is not illuminated by $S$.
By Claim~\ref{cl1}, $x\in (v+9\delta B_\infty^n)\cup(v'+9\delta B_\infty^n)$.
Without loss of generality, let us assume that $\|x-v\|_\infty\leq 9\delta$ and, moreover,
$v=(-1,-1,\dots,-1)$ and $v'=(1,-1,-1,\dots,-1)$.
For $i\leq n$, let $w^i$ be the vertex of $B_\infty^n$ adjacent to $v$, with $w_i^i\neq v_i$.
Then $w^i-x$ belongs to $\overline{\IllumSet(B,x)}$.
By our assumption, $w^i-x=2e_i+9\delta \widetilde y^i$ for some $\widetilde y^i\in B_{\infty}^n$,
so that
\begin{equation}\label{eq: aux -9hggs}
\mbox{$\overline{\IllumSet(B,x)}$ contains a convex cone generated by $e_i+4.5\delta \widetilde y^i$},\quad i\leq n.
\end{equation}

Next, we use the assumption that $x$ is not illuminated by any of the directions from $S$.
As $x$ is adjacent to a distinguished vertex, there are two types of illuminating directions from $S$ we will consider.
First, for any vertex $w$ of $B_\infty^n$ adjacent to $v$ but distinct from $v'$, $S$ contains a direction of the form
$$s^i:=-\sum_{j\neq i} w_j e_j-\theta w_i e_i+\varepsilon y^i,$$
for some $y^i\in B_\infty^n$, where $i\geq 2$ is the unique index such that $w_i\neq v_i=v_i'$.
A trivial computation gives
\begin{equation}\label{eq: aux agub}
s^i=(1,1,1,\ldots,1)-(1+\theta)e_i+\varepsilon y^i,\quad i=2,3,\ldots,n.
\end{equation}
Further, $S$ contains the distinguished direction $s^1$ such that
$$s^1\in-\prod_{j=1}^n \bigg(\frac{v_j+v_j'}{2}\cdot [\theta,1]\bigg)+\varepsilon B_\infty^n.$$
Since $x$ is not illuminated by any of the $s^i$'s, we have $s^i\notin \IllumSet(B,x)$, $i=1,2,\dots,n$.
The Hahn--Banach separation theorem implies that for any $i$ there is
an affine hyperplane $H_i$ passing through $x$ and parallel to $s^i$ but not intersecting the interior of $x+\IllumSet(B,x)$. 
Choose the unit normal vector $z^i$ to $H_i$ such that $\langle z^i,e_i\rangle\geq 0$.

First, consider the directions $s^i$ for $i\geq 2$.
Since $\langle s^i, z^i\rangle=0$ and in view of~\eqref{eq: aux agub} we have
$$
\sum_{j\neq i} z^i_j-\theta z^i_i+\varepsilon\langle y^i, z^i\rangle=0.
$$
Assume that there exist two coordinates $z^i_k$ and $z^i_\ell$ ($k\neq \ell$) of $z^i$ such that
$|z^i_k|,|z^i_\ell|\geq\psi$. Let us suppose for concreteness that $k\neq i$. Then
$|\sum_{j\neq i} z^i_j|\leq \theta+\varepsilon\sqrt{n}\leq \psi/4$
and $|z^i_k |\geq \psi$, whence there is $u\neq k$
such that $|z^i_u|\geq \psi/(2n)$ and the sign of $z^i_u$ is opposite to the sign of $z^i_k$.
Then, by Claim~\ref{clclmid}, there is a vector $f\in\Sph^{n-1}$ such that $\langle f,z^i\rangle=0$
and $f_p\geq \eta'(n,\psi/(2n))$ for all $p\leq n$. By Claim~\ref{newwclaim2},
applied to the convex cone $\widetilde K$ generated by vectors $\widetilde z^i:= e_i+4.5\delta \widetilde y^i$, and our choice of parameters,
we get that $f$ must belong to the interior of $\widetilde K$. On the other hand, the vectors $e_i+4.5\delta \widetilde y^i$
all belong to the closure of the cone $\eta(B,x)$ (see \eqref{eq: aux -9hggs}).
Hence, $f\in\IllumSet(B,x)$,
so that $H_i$ intersects the interior of $x+\IllumSet(B,x)$ and we come to contradiction. Thus, the vector $z^i$
has only one coordinate $z^i_b$ ($b\leq n$) with $|z^i_b|\geq \psi$ (note that automatically, $|z^i_b|> 1-n\psi$).
By the choice of $\psi$ and $\theta$, we necessarily have $b=i$, and so $\|z^i-e_i\|_\infty\leq n\psi<\psi'(n,\kappa)$.

Next, we apply a similar argument to direction $s^1$.
We have
$
\sum_{j=2}^n s_j^1 z^1_j+s_1^1 z^1_1=0
$,
where $\theta-\varepsilon\leq s_j^1\leq 1+\varepsilon$ for all $j\geq 2$ and $-\varepsilon\leq s_1^1\leq \varepsilon$.
Suppose that for some index $k\geq 2$ we have $|z^1_k|\geq\psi$.
Since $|s_1^1 z^1_1|\leq \varepsilon\leq \theta\psi/4$, we have
$|\sum_{j=2}^n s_j^1 z^1_j|\leq \theta\psi/4$ while $|s^1_k z^1_k|\geq \theta\psi/2$.
Hence, there is $\ell\geq 2$ ($\ell\neq k$) such that $|s^1_\ell z^1_\ell|\geq \theta\psi/(4n)$,
and the sign of $z^1_\ell$ is opposite to the sign of $z^1_k$.
By Claim~\ref{clclmid}, there is a vector $f\in\Sph^{n-1}$ such that $\langle f,z^i\rangle=0$ and
$f_p\geq \eta'(n,\theta\psi/(8n))$ for all $p\leq n$. An application of Claim~\ref{newwclaim2}
identical to the previous case, yields a contradiction. Thus, $|z^1_j|\leq\psi$ for all $j\geq 2$,
and $\|z^1-e_1\|_\infty\leq n\psi<\psi'(n,\kappa)$.

Observe that the cone $\eta(B,x)$ is contained inside the set $\{y\in\R^n:\,\langle y,z^i\rangle\geq 0\}$.
Finally, applying Claim~\ref{newclaim}, we get from the last observation
and our choice of parameters that $\solid(B,x)\leq (1+\kappa)\cdot 2^{-n}$,
completing the proof.
\end{proof}

\begin{rem}
We would like to point out that the proof of Lemma~\ref{l: solid}
requires that $\theta\gg\varepsilon$, and, this is the only place in the proof where this relation is used.
\end{rem}

\subsection{Proof of Lemma \ref{l: pseudo=cubic}}

Assumptions of the lemma imply that there exist a point
$x=(x_1,\dots,x_n)\in\Inter(B)$ with $|x_i|>1$ for at least one $i\in \{1,\dots,n\}$.
Without loss of generality, we can assume that $x_1>1$.
Then, by convexity of $B$ and the condition $B_\infty^n\subset B$,
there exists a point $y=(y_1,0,0,\dots,0)\in \Inter(B)$ with $y_1>1$.
Take two vertices $v:=(1,1,\dots,1)$ and $v':=(-1,1,\dots,1)$ of the unit cube, and define
$$p:=-\frac{v+v'}{2}+\varepsilon' e_1=(\varepsilon',-1,-1,\dots,-1),$$
where $\varepsilon':=\min(\varepsilon,(y_1-1)/2)$.
Note that $p$ illuminates $v'$ as a vertex of the 
cube, and hence it illuminates $v'$ viewed as a boundary point of $B$.
Additionally, as $v+p\in\Inter (B)$, we have that
$p$ illuminates $v$ as a boundary point of $B$.

Consider the collection of directions $S$ consisting of $p$,
of all the directions $-w$ where $w$ are the vertices of the unit cube not adjacent to $\{v,v'\}$,
and of the directions of the form
$-\sum_{j\neq i}w_j e_j- \theta w_i e_i$, for the vertices $w$ adjacent to either $v$ or $v'$
and different from $v,v'$ in $i$-th coordinate ($i=2,3,\dots,n$).
Note that $S$ belongs to the class $\Class_n(\varepsilon,\theta)$ and that each point of $\{-1,1\}^n$
is illuminated by $S$.

\subsection{Proof of the Lemma~\ref{l: pseudo not cubic}}

We begin with the following elementary claim. 

\begin{Claim}\label{claim1lemma7}
Consider the collection $\{w^i\}_{i=1}^m$ of $m$ vertices of the discrete cube $\{-1,1\}^m$ in $\R^m$,
where for each $i=1,\dots, m$, all coordinates of $w^i$ except for the $i$-th are $+1$,
and the $i$-th coordinate is $-1$.
Then the $(m-1)$--dimensional affine linear span of $w^1,w^2\ldots,w^m$ contains no other vertices of the cube except $w^1,\ldots,w^m$.
\end{Claim}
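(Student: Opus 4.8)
The plan is to work with the explicit coordinates of the vertices $w^1,\dots,w^m$ and characterize their affine span by a single linear equation. Observe that $w^i = (1,1,\dots,1) - 2e_i$, so each $w^i$ satisfies $\sum_{j=1}^m (w^i)_j = m - 2$. Hence the hyperplane
\[
H := \Big\{x\in\R^m:\ \sum_{j=1}^m x_j = m-2\Big\}
\]
contains all of $w^1,\dots,w^m$. Since these $m$ points are affinely independent (this is essentially the statement, used in the proof of Lemma~\ref{l: perturbation of cube}, that the vectors $\{w^1 - w^i\}_{i\ne 1}$ are linearly independent), their affine span is exactly the $(m-1)$-dimensional hyperplane $H$. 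Therefore it suffices to show that the only vertices of $\{-1,1\}^m$ lying on $H$ are $w^1,\dots,w^m$ themselves.

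Next I would carry out this last step directly. Let $u\in\{-1,1\}^m$ with $\sum_{j=1}^m u_j = m-2$. Writing $k$ for the number of coordinates of $u$ equal to $-1$, we have $\sum_j u_j = (m-k) - k = m - 2k$, so $m-2k = m-2$, i.e.\ $k=1$. Thus $u$ has exactly one coordinate equal to $-1$, say the $i$-th, which forces $u = (1,\dots,1) - 2e_i = w^i$. This exhausts all vertices on $H$, proving the claim.

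I do not expect a genuine obstacle here; the only point requiring a word of care is the affine independence of $w^1,\dots,w^m$, which guarantees that their affine span is all of $H$ rather than a proper subspace (otherwise the span could a priori miss some $w^i$, though that cannot happen, or — more to the point — we need the span to be exactly $H$ so that ``contains no other cube vertices'' follows from ``$H$ contains no other cube vertices''). This independence follows because the matrix with rows $w^1,\dots,w^m$ equals $J - 2\Id$, where $J$ is the all-ones matrix, and $J - 2\Id$ is invertible (its eigenvalues are $m-2$ and $-2$, both nonzero for $m\ge 3$; for $m\le 2$ the claim is trivial or vacuous). With this in hand the argument is complete.
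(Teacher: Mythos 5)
Your proof is correct. It takes a genuinely different route from the paper's: you characterize the affine span as the hyperplane $H=\{x:\sum_j x_j=m-2\}$ and then count the $-1$ coordinates of any cube vertex on $H$, whereas the paper parametrizes the affine span directly, writing a generic point as $x=(1-2\alpha_1,\dots,1-2\alpha_m)$ with $\sum_i\alpha_i=1$ and observing that $|1-2\alpha_i|=1$ forces each $\alpha_i\in\{0,1\}$, hence (by the sum condition) exactly one of them equals $1$. The paper's argument is shorter and does not require the affine-independence step; your hyperplane picture, on the other hand, makes the geometry transparent and as a bonus verifies explicitly that the span is $(m-1)$-dimensional, which the claim's wording presumes. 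You are also right, in your parenthetical, that affine independence is not actually needed for the ``no other cube vertices'' conclusion --- the containment of the span in $H$ already suffices --- so your extra matrix computation with $J-2\,\Id$ is harmless but dispensable.
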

\begin{proof} Let $x$ be a point in the affine linear span of 
$w^1,\ldots,w^m$, i.e.\ $x=\sum \alpha_i w^i$ for some coefficients $\alpha_i\in\R$ so that $\sum \alpha_i=1$.
By the definition of $w^i$'s, we have $x=(1-2\alpha_1, 1-2\alpha_2,\dots,1-2\alpha_m)$.
Assume that $x\in\{-1,1\}^m$.
Then $|1-2\alpha_i|=1$ for all $i\leq m$, whence all $\alpha_i$'s are equal to either $0$ or $1$.
As their sum is one, that means that all the $\alpha_i$'s except for one are equal to zero.
Hence, $x$ coincides with one of the points $w^i$'s, which proves the claim.
\end{proof}
\begin{rem}\label{rem1lemma7}
The above claim implies that for any point $w\in\{-1,1\}^m\setminus\{w^1,\ldots,w^m\}$ the
simplex $\Delta_w$ with vertices $\{w,w^1,w^2,\ldots,w^m\}$ is non-degenerate. In particular, it follows
that there is a real number $u=u(m)>0$ depending only on $m$ such that 
for all $w\in\{-1,1\}^m\setminus\{w^1,\ldots,w^m\}$
the average $\frac{1}{m+1}(w+w^1+\ldots+w^m)$
is at the Euclidean distance at least $u$ from any supporting hyperplane for $\Delta_w$.
\end{rem}

\bigskip

Let $0<r\leq 1/4$ be a parameter, and let $P$ be a convex polytope in $\R^n$ such that
\begin{equation}\label{eq: aux -49u3}
\mbox{$P$ has $2^n$ vertices, and
$\forall\;v\in\{-1,1\}^n\;\;\exists\;\mbox{ a vertex }\widetilde v$ of $P$ with $\widetilde v-v\in r\,B_\infty^n$.}
\end{equation}
Let us make an immediate elementary observation that will be useful later:
\begin{Claim}\label{claimthetaLemma7}
For any $n>2$ and $\theta\in(0,1)$ there is $\widetilde r=\widetilde r(n,\theta)\in(0,1/4]$ with the following property.
Let $P$ be a convex polytope in $\R^n$ satisfying \eqref{eq: aux -49u3} with parameter $r\leq \widetilde r$,
and let $w$ be any vertex of $P$. Then for any $k\leq n$ the vector $(1-\theta)w_k e_k$ belongs to $\Inter(P)$.
\end{Claim}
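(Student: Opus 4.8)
The plan is to deduce this from the single fact that $P$ contains the shrunk cube $(1-r)B_\infty^n$; once that inclusion is in hand, the vector $(1-\theta)w_ke_k$ is easily seen to lie strictly inside $(1-r)B_\infty^n$, hence in $\Inter(P)$, provided $r$ is small relative to $\theta$. Accordingly I would set $\widetilde r=\widetilde r(n,\theta):=\min(1/4,\theta/3)$ and fix a polytope $P$ satisfying \eqref{eq: aux -49u3} with some $r\le\widetilde r$.

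First I would record that every vertex of $P$ is close to a vertex of the standard cube. Since $r<1$, the boxes $v+rB_\infty^n$, $v\in\{-1,1\}^n$, are pairwise disjoint, so the $2^n$ vertices $\widetilde v$ of $P$ supplied by \eqref{eq: aux -49u3} are pairwise distinct; as $P$ has exactly $2^n$ vertices, they exhaust the vertex set of $P$, so any vertex $w$ of $P$ equals $\widetilde v$ for a unique $v\in\{-1,1\}^n$. In particular $\|w-v\|_\infty\le r$, and hence $|w_k|\le 1+r$ for every $k\le n$.

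Next I would prove $(1-r)B_\infty^n\subset P$, repeating the Hahn--Banach argument already used in the proof of Lemma~\ref{l: perturbation of cube}. If the inclusion failed, there would be a point $q\in(1-r)B_\infty^n$ outside the closed convex set $P$, hence a functional $\xi\in\R^n$ with $\|\xi\|_1=1$ and a scalar $c$ such that $\langle p,\xi\rangle<c$ for all $p\in P$ while $\langle q,\xi\rangle>c$. Since $\langle q,\xi\rangle\le(1-r)\|\xi\|_1=1-r$, this forces $c<1-r$. But the cube vertex $v^+:=(\sign\xi_1,\dots,\sign\xi_n)$ maximizes $\langle\cdot,\xi\rangle$ over $B_\infty^n$, with value $\|\xi\|_1=1$, so its approximant satisfies $\langle\widetilde{v^+},\xi\rangle\ge\langle v^+,\xi\rangle-r\|\xi\|_1=1-r>c$, contradicting $\widetilde{v^+}\in P$. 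Taking interiors then gives $\Inter((1-r)B_\infty^n)\subset\Inter(P)$.

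Finally I would check the elementary inequality: for a vertex $w$ of $P$ and any $k\le n$, the $k$-th coordinate of $(1-\theta)w_ke_k$ has modulus $(1-\theta)|w_k|\le(1-\theta)(1+r)$, and $(1-\theta)(1+r)<1-r$ reduces to $2r<\theta(1+r)$, which holds since $r\le\theta/3$; the remaining coordinates of $(1-\theta)w_ke_k$ vanish. Hence $(1-\theta)w_ke_k\in\Inter((1-r)B_\infty^n)\subset\Inter(P)$, as desired. The only step that needs any attention is the inclusion $(1-r)B_\infty^n\subset P$, which must be established for an arbitrary polytope with $2^n$ vertices and not merely for parallelotopes; the separation argument above is insensitive to this, and the remaining steps are one-line computations.
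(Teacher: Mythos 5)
Your proof is correct and is precisely the argument the paper intends; the paper itself omits a proof, calling the claim an ``immediate elementary observation.'' Your two ingredients --- the inclusion $(1-r)B_\infty^n\subset P$ (obtained via separation, exactly as in the proof of Lemma~\ref{l: perturbation of cube}, and correctly noted to need no parallelotope hypothesis) and the bound $|w_k|\le 1+r$ for any vertex $w$ of $P$ --- combine cleanly, and the choice $\widetilde r=\min(1/4,\theta/3)$ makes $(1-\theta)(1+r)<1-r$ hold, which is all that is needed.
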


\medskip

Let $P$ be as in \eqref{eq: aux -49u3}.
For any $i\leq n$, let $V_i^+=V_i^+(P)$ (resp., $V_i^-=V_i^-(P)$) be the set of vertices of $P$ with positive
(resp., negative) $i$-th coordinates.
Further, for any $i\leq n$ we introduce special collections $\mathcal W_i^+=\mathcal W_i^+(P)\subset V_i^+$
and $\mathcal W_i^-=\mathcal W_i^-(P)\subset V_i^-$,
where $\mathcal W_i^+$ is the set of $n-1$ vertices of $P$ (from $V_i^+$)
each having exactly $n-1$ positive coordinates,
and $\mathcal W_i^-$ is the set of $n-1$ vertices of $P$
having exactly $2$ negative coordinates (one of them the $i$-th).
Note that, when $P$ is the standard cube $[-1,1]^n$,
the sets $\mathcal W_i^+$ and $\mathcal W_i^-$, with the $i$-th components
of the vertices removed, directly correspond to the vertex sets from Claim~\ref{claim1lemma7}, with $m=n-1$.

The next statement obviously holds for the standard cube (see Remark~\ref{rem1lemma7}).
Its extension for very small perturbations of the cube follows by continuity. We omit the proof.
%\begin{Claim}\label{claim1bisLemma7}
%For each $n\geq 3$ there are $r'=r'(n)\in(0,1/4]$ and $u=u(n)>0$ with the following property:
%Let $P$ be a polytope in $\R^n$ satisfying \eqref{eq: aux -49u3}
%with parameter $0<r\leq r'$.
%Then for every $i\leq n$, for the set $\mathcal W_i^+$ defined above and for any vertex $w\in V_i^+\setminus \mathcal W_i^+$,
%the average (mean) $\frac{1}{n}\sum_{q\in \mathcal W_i^+\cup\{w\}}q$
%of the points in $\mathcal W_i^+\cup\{w\}$ lies at the Euclidean distance at least $u$
%from any $(n-2)$--dimensional facet of the $(n-1)$--simplex $\conv(\mathcal W_i^+\cup\{w\})$.
%The same property holds for $\mathcal W_i^-$.
%\end{Claim}
\begin{Claim}\label{claim1cLemma7}
For each $n\geq 3$ there is $r''=r''(n)\in(0,1/4]$ with the following property:
Let $P$ be a polytope in $\R^n$ satisfying \eqref{eq: aux -49u3}
with parameter $r\leq r''$, let $i\leq n$, and let $w^+\in V_i^+\setminus \mathcal W_i^+$ and
$w^-\in V_i^-\setminus \mathcal W_i^-$ be two points with $\sign(w^+_j)=\sign(w^-_j)$ for all $j\neq i$.
Denote by $H$ the affine linear span of $\mathcal W_i^-\cup\{w^-\}$.
Further, let $\widetilde w^+\in \mathcal W_i^+\cup\{w^+\}$ and $\widetilde w^-\in \mathcal W_i^-\cup\{w^-\}$ be two points
with $\sign(\widetilde w^+_j)=\sign(\widetilde w^-_j)$ for all $j\neq i$.
Set $c^+:=\frac{1}{n}(w^++\sum_{w\in \mathcal W_i^+}w)$ and
let $q$ be the point in $H$ such that $q-c^+$ is parallel to $\widetilde w^- -\widetilde w^+$. Then necessarily $q$
belongs to the interior of the simplex $\conv(W_i^-\cup\{w^-\})$.
\end{Claim}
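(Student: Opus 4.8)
The plan is to reduce the statement to the case $P=B_\infty^n$, where it is an immediate consequence of Remark~\ref{rem1lemma7}, and then to pass to small perturbations of the cube by a standard continuity argument. Fix $n\ge 3$ and, for a fixed index $i\le n$, let $\pi\colon\R^n\to\R^n$ be the orthogonal projection killing the $i$-th coordinate; identify its image with $\R^{n-1}$. The point of the argument is that, after applying $\pi$, the configuration appearing in the claim collapses exactly to the simplex of Claim~\ref{claim1lemma7} (with $m=n-1$) together with its centroid.

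Consider first $P=B_\infty^n$. By the definitions of $\mathcal W_i^+$ and $\mathcal W_i^-$, both $\pi(\mathcal W_i^+)$ and $\pi(\mathcal W_i^-)$ coincide with the set $\{w^1,\dots,w^{n-1}\}\subset\{-1,1\}^{n-1}$ from Claim~\ref{claim1lemma7}, while the sign hypotheses force $\pi(w^+)=\pi(w^-)=:w$ and $\pi(\widetilde w^+)=\pi(\widetilde w^-)$, with $w\notin\{w^1,\dots,w^{n-1}\}$. Hence $\widetilde w^--\widetilde w^+=-2e_i$, the hyperplane $H$ is $\{x\colon x_i=-1\}$, and the point $q$ is well defined with
\[
\pi(q)=\pi(c^+)=\tfrac{1}{n}\Big(w+\sum_{k=1}^{n-1}w^k\Big),
\]
which is the centroid of the simplex $\Delta_w$ of Remark~\ref{rem1lemma7}. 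Since $\pi$ restricts to an isometry of $H$ onto $\R^{n-1}$ taking $\conv(\mathcal W_i^-\cup\{w^-\})$ onto $\Delta_w$, Remark~\ref{rem1lemma7} shows that $q$ lies in the interior of $\conv(\mathcal W_i^-\cup\{w^-\})$; in fact its distance to every facet is at least $u(n-1)>0$, a constant depending only on $n$.

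Now let $P$ be any polytope satisfying \eqref{eq: aux -49u3} with a small parameter $r$. Since $r\le 1/4$, each of the $2^n$ vertices of $P$ lies in a unique translate $v+rB_\infty^n$, $v\in\{-1,1\}^n$, which identifies every vertex of $P$ with a vertex of the cube. There are then only finitely many combinatorial choices for the index $i$ and for which vertices of $P$ play the roles of $w^+,w^-,\widetilde w^+,\widetilde w^-$ (these determine $\mathcal W_i^\pm$, and then $c^+$, $H$, $q$ and the simplex $\conv(\mathcal W_i^-\cup\{w^-\})$). For each fixed combinatorial choice all of these objects depend continuously on $P$ and reduce to their $B_\infty^n$-values as $r\to 0$; in particular, for $r$ small enough $H$ stays $(n-1)$-dimensional and transversal to the (nearly vertical) direction $\widetilde w^--\widetilde w^+$, so $q$ remains well defined. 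Because ``lying in the interior of $\conv(\mathcal W_i^-\cup\{w^-\})$ at distance at least $u(n-1)/2$ from its boundary'' is an open condition satisfied for the cube, there is $r''=r''(n)\in(0,1/4]$ such that the conclusion holds for every $P$ as in \eqref{eq: aux -49u3} with $r\le r''$ and every choice of $i,w^\pm,\widetilde w^\pm$ as in the statement. (Equivalently, were this false one could find polytopes $P_m$ with parameters $r_m\to 0$ and a fixed combinatorial choice for which $q_m$ escapes the relative interior of the corresponding simplex, and passing to the limit $P_m\to B_\infty^n$ would contradict the previous paragraph.)

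The only delicate points are bookkeeping: checking uniformly in $P$ that $q$ is well defined (transversality of $H$ to $\widetilde w^--\widetilde w^+$) and that $\conv(\mathcal W_i^-\cup\{w^-\})$ remains a genuine $(n-1)$-dimensional simplex, so that its interior relative to $H$ is the object we want. Both are open conditions that hold for the cube --- the second being exactly the content of Claim~\ref{claim1lemma7} and Remark~\ref{rem1lemma7} --- so there is no real obstacle beyond these; the substance of the claim is entirely contained in Remark~\ref{rem1lemma7}.
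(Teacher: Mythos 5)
Your proof is correct and follows exactly the route the paper indicates: the paper explicitly omits the proof, stating only that the claim holds for the cube via Remark~\ref{rem1lemma7} and ``extends for very small perturbations of the cube by continuity.'' You have simply filled in those two steps — the projection killing the $i$-th coordinate that identifies $\pi(q)$ with the centroid of $\Delta_w$ for $P=B_\infty^n$, and the compactness argument over the finitely many combinatorial types to extract a uniform $r''(n)$ — so this is the same approach, carried out in full.
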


In the next claim, we give a sufficient condition for $P$ to be a parallelotope.

\begin{Claim}\label{claim2lemma7}
Suppose that a polytope $P$ in $\R^n$
satisfies \eqref{eq: aux -49u3} with $r\leq r''$, where $r''$ is given by %Claim~\ref{claim1bisLemma7} and
Claim~\ref{claim1cLemma7}.
Further, suppose that for every $i\leq n$ and every pair of vertices $(v^+,v^-)\in (V_i^+\setminus \mathcal W_i^+)\times
(V_i^-\setminus \mathcal W_i^-)$
such that $\sign(v^+_j)=\sign(v^-_j)$ for all $j\neq i$,
we have that the affine spans of $\mathcal W_i^+\cup\{v^+\}$ and $\mathcal W_i^-\cup\{v^-\}$ are parallel
and, moreover, both spans are supporting hyperplanes for $P$. Then necessarily $P$ is a parallelotope.
\end{Claim}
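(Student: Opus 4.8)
The plan is to show that the hypothesis forces $P$ to equal an explicit parallelotope $Q$, cut out by $n$ slabs. Since $r\le r''\le 1/4$, the $2^n$ vertices of $P$ are in bijection with $\{-1,1\}^n$: write $\widetilde v$ for the unique vertex of $P$ with $\widetilde v-v\in rB_\infty^n$, so $\sign((\widetilde v)_j)=v_j$ for every $j$. Thus $V_i^\pm=\{\widetilde v:v_i=\pm1\}$, while $\mathcal W_i^+$, $\mathcal W_i^-$ are the sets of $\widetilde v$ with $v_i=1$ and exactly one negative coordinate, respectively $v_i=-1$ and exactly two negative coordinates. For $r$ small the non-degeneracy recorded in Remark~\ref{rem1lemma7}, which persists under a small perturbation, guarantees that every $n$-element family $\mathcal W_i^\pm\cup\{v^\pm\}$ occurring in the hypothesis is affinely independent; hence each affine span appearing there is an honest hyperplane with unit normal within $O(r)$ of $\pm e_i$, and no vertex of $P$ outside $\mathcal W_i^\pm$ lies on $\mathrm{aff}(\mathcal W_i^\pm)$.

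I would first build the candidate parallelotope. For each $i$, the pair $v^+=\widetilde{(1,\dots,1)}$, $v^-=\widetilde{(1,\dots,1)-2e_i}$ is among the pairs of the hypothesis (its common off-$i$ sign pattern is all-positive, hence not the ``one negative'' pattern), so the hypothesis produces parallel supporting hyperplanes $A_i:=\mathrm{aff}\big(\mathcal W_i^+\cup\{\widetilde{(1,\dots,1)}\}\big)$ and $B_i:=\mathrm{aff}\big(\mathcal W_i^-\cup\{\widetilde{(1,\dots,1)-2e_i}\}\big)$; write $\Sigma_i$ for the slab between them, so $P\subseteq\Sigma_i$. The normals of $\Sigma_1,\dots,\Sigma_n$ are close to $e_1,\dots,e_n$, hence linearly independent, so $Q:=\bigcap_i\Sigma_i$ is a bounded parallelotope containing $P$; its $2n$ facets lie on $A_1,B_1,\dots,A_n,B_n$, and its $2^n$ vertices are the points $q_\sigma$ ($\sigma\in\{-1,1\}^n$), where $q_\sigma$ is the intersection of the $A_i$ with $\sigma_i=1$ and the $B_i$ with $\sigma_i=-1$, and $q_\sigma$ lies near the cube vertex $\sigma$.

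The reduction is: it is enough to prove $V_i^+\subseteq A_i$ and $V_i^-\subseteq B_i$ for every $i$. Granting this, each vertex $\widetilde v$ of $P$ lies on $A_i$ when $v_i=1$ and on $B_i$ when $v_i=-1$, so $\widetilde v=q_v$ is a vertex of $Q$; thus all $2^n$ vertices of $Q$ are vertices of $P$, and since $P$ has exactly $2^n$ vertices with $P\subseteq Q$ the vertex sets coincide and $P=\conv(\text{vertices of }Q)=Q$, a parallelotope. To prove the inclusion, fix $i$; by the hypothesis, for every off-$i$ sign pattern $s$ other than the one-negative pattern, the span $\mathrm{aff}(\mathcal W_i^+\cup\{w_s\})$ (where $w_s$ is the vertex of $P$ with $i$-th coordinate positive and off-$i$ pattern $s$) is a supporting hyperplane of $P$; all these hyperplanes contain the $(n-2)$-flat $\mathrm{aff}(\mathcal W_i^+)$, and since two facets of a polytope meet in at most a ridge, at most two distinct hyperplanes can occur, one of them being $A_i$ (take $s$ all-positive). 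If $A_i$ is the only one then $V_i^+=\mathcal W_i^+\cup\{w_s\}_s\subseteq A_i$ and we are done; otherwise there is a second facet $F$, with $P\cap A_i$ and $F$ the two facets through the ridge $R_i^+:=\mathrm{aff}(\mathcal W_i^+)\cap P$, witnessed by a vertex $w^+=w_{s_0}\in F\setminus R_i^+$ with $s_0$ having at least two negative coordinates, and this case must be excluded.

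The main obstacle is ruling out this ``creased'' facet, and this is exactly where Claim~\ref{claim1cLemma7} and the hypothesis at a second coordinate are used. Its partner $w^-$ (the vertex of $P$ near the reflection of $w^+$'s cube vertex in coordinate $i$) again appears in the hypothesis, so $\mathrm{aff}(\mathcal W_i^-\cup\{w^-\})$ is the affine hull of a facet $G\parallel F$, and (by the same dichotomy on the $(-i)$-side) $B_i$ and $\mathrm{aff}(G)$ are the only facet-hyperplanes through $R_i^-$, with $P$ trapped in the slab between $F$ and $G$. Apply Claim~\ref{claim1cLemma7} with these $w^+,w^-$ and with $(\widetilde w^+,\widetilde w^-)$ either $(w^+,w^-)$ or a matched pair from $\mathcal W_i^+\times\mathcal W_i^-$, so that $\widetilde w^--\widetilde w^+$ is within $O(r)$ of $-2e_i$: the centroid $c^+$ of $\{w^+\}\cup\mathcal W_i^+$ lies in $F$ but off $\mathrm{aff}(\mathcal W_i^+)$, hence strictly on the $F$-side of $A_i$, while the point $q$ where the line through $c^+$ in direction $\widetilde w^--\widetilde w^+$ meets $\mathrm{aff}(G)$ lies in the relative interior of $\conv(\mathcal W_i^-\cup\{w^-\})\subseteq G$, hence strictly on the $G$-side of $B_i$; since $F\parallel G$ with $P$ between them, $[c^+,q]\subseteq P$ is a maximal chord of $P$ in the $e_i$-direction touching $F$ and $G$ only in their relative interiors. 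Since $s_0$ has a negative coordinate $j\ne i$, the vertex $w^+$ is also one of the special vertices for coordinate $j$; I expect that feeding the chord $[c^+,q]$ into the parallelism hypothesis for coordinate $j$, together with a second application of Claim~\ref{claim1cLemma7}, forces $w^+$ into the relative interior of a positive-dimensional face of $P$ --- impossible. That contradiction rules out $F$, so $A_i$ is the unique facet-hyperplane through $R_i^+$, giving $V_i^+\subseteq A_i$ and, symmetrically, $V_i^-\subseteq B_i$; the reduction then yields $P=Q$. Everything but this final exclusion is routine perturbation bookkeeping, and the exclusion is the crux that Claim~\ref{claim1cLemma7} is tailored to provide.
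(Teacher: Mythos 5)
Your setup — identifying the $2^n$ vertices of $P$ with sign vectors, building the candidate parallelotope $Q=\bigcap_i\Sigma_i$, and reducing the whole claim to $V_i^+\subseteq A_i$ and $V_i^-\subseteq B_i$ — is sound, and you correctly identify Claim~\ref{claim1cLemma7} as the tool that has to kill any vertex sitting strictly inside the slab. But the crux is not proven. Your exclusion of the ``creased'' facet $F$ is an explicit conjecture (``I expect that feeding the chord $[c^+,q]$ into the parallelism hypothesis for coordinate $j$ \ldots forces $w^+$ into the relative interior of a positive-dimensional face''). Nothing in what you wrote converts the chord $[c^+,q]$, whose endpoints are generic relative-interior points of two facets, into data that the hypothesis for coordinate $j$ can act on — that hypothesis speaks about affine spans of specific vertex families $\mathcal W_j^\pm\cup\{\cdot\}$, not about interior chords. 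As stated this step would not go through, and it is precisely the step that carries the entire weight of the claim.

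For comparison, the paper never passes to a second coordinate and never needs the ``at most two facets through $\mathrm{aff}(\mathcal W_i^+)$'' dichotomy. It fixes one pair $(v^+,v^-)$, takes the common unit normal $z$ of the parallel hyperplanes $H_1,H_2$ with levels $\beta>0>\alpha$, and then argues directly: take any other $w^+\in V_i^+\setminus(\mathcal W_i^+\cup\{v^+\})$ and its partner $w^-$, form $c^+=\tfrac1n\big(w^++\sum_{\mathcal W_i^+}w\big)$, and let $q$ be the point of $H_2'=\mathrm{aff}(\mathcal W_i^-\cup\{w^-\})$ on the line through $c^+$ in direction $\widetilde w^--\widetilde w^+$. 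Because $c^+,\widetilde w^+\in H_1'$ and $q,\widetilde w^-\in H_2'$ with $H_1'\parallel H_2'$, one actually gets $q-c^+=\widetilde w^--\widetilde w^+$, so $\widetilde w^+,c^+,q,\widetilde w^-$ is a genuine parallelogram. Pairing $\langle z,q-c^+\rangle=\langle z,\widetilde w^--\widetilde w^+\rangle=\alpha-\beta$ against $\langle z,c^+\rangle\le\beta$ and (via Claim~\ref{claim1cLemma7}) $\langle z,q\rangle>\alpha$ gives an immediate contradiction unless $\langle z,w^+\rangle=\beta$ and $\langle z,w^-\rangle=\alpha$; this forces every $w^\pm$ onto $H_1,H_2$ and hence $V_i^\pm$ onto a single facet for each $i$. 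I would suggest replacing your coordinate-$j$ sketch with this single-coordinate parallelogram comparison: it is shorter, it makes the ``parallelism'' hypothesis do all the work, and it avoids the incidence bookkeeping your ``at most two facets'' detour introduces.
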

\begin{proof}
Fix any $i\leq n$, let $(v^+,v^-)$ be a pair as above,
and let $z\in \Sph^{n-1}$ be the (unique) vector such that for some number $\alpha<0$ we have
$\langle z,w\rangle=\alpha$ for all $w\in \mathcal W_i^-\cup\{v^-\}$.
By the assumption of the claim, the affine spans $H_1$ and $H_2$
of $\mathcal W_i^+\cup\{v^+\}$ and $\mathcal W_i^-\cup\{v^-\}$ are parallel,
hence there is a number $\beta>0$ such that $\langle z,w\rangle=\beta$ for all $w\in\mathcal W_i^+\cup\{v^+\}$.
Next, choose any vertex $w^+\in V^+\setminus (\mathcal W_i^+\cup \{v^+\})$
and the vertex $w^-\in V^-\setminus (\mathcal W_i^-\cup \{v^-\})$ such that 
$\sign(w^+_j)=\sign(w^-_j)$ for all $j\neq i$.
Again, the affine spans $H_1'$ and $H_2'$ of $\mathcal W_i^+\cup\{w^+\}$ and $\mathcal W_i^-\cup\{w^-\}$ are parallel.
On the other hand, by the fact that $H_1,H_2$ are both supporting hyperplanes for $P$,
we get that $\langle z,w^+\rangle\leq \beta$ and $\langle z,w^-\rangle\geq \alpha$.

Assume for a moment that at least one of the last two inequalities is strict. Note that in this case necessarily
both inequalities are strict, i.e.\ $\langle z,w^+\rangle< \beta$ and $\langle z,w^-\rangle> \alpha$
(otherwise, we would get an immediate contradiction to the fact that $H_1$ and $H_2$,
as well as $H_1'$ and $H_2'$ are parallel). Next, let
$c^+$ and $c^-$ be arithmetic means of points in $\mathcal W_i^+\cup\{w^+\}$ and $\mathcal W_i^-\cup\{w^-\}$, respectively,
fix any pair $(\widetilde w^+,\widetilde w^-)\in \mathcal W_i^+\times \mathcal W_i^-$ such that
$\sign(\widetilde w^+_j)=\sign(\widetilde w^-_j)$ for all $j\neq i$,
and define $q$ as the point in $H_2'$ such that $q-c^+$ is parallel to $\widetilde w^- -\widetilde w^+$.
By Claim~\ref{claim1cLemma7}, the point $q$ belongs to the interior of the simplex $\conv(\mathcal W_i^-\cup\{w^-\})$.
This gives
$\langle z,q\rangle>\alpha$. Let us summarize: we have obtained four points $\widetilde w^+$, $c^+$, $q$,
$\widetilde w^-$ forming a parallelogram, but
\begin{align*}
\langle z,\widetilde w^+\rangle&=\beta\geq\langle z, c^+\rangle;\\
\langle z,\widetilde w^-\rangle&=\alpha<\langle z,q\rangle,
\end{align*}
which is impossible.
Thus, necessarily $\langle z,w^+\rangle=\beta$ and $\langle z,w^- \rangle=\alpha$,
implying that $H_1$ coincides with $H_1'$ and $H_2$ coincides with $H_2'$.

Repeating the above argument for all vertices in $V_i^+\setminus \mathcal W_i^+$
and $V_i^-\setminus \mathcal W_i^-$, we get that there is a single facet of $P$ containing all vertices $V_i^+$,
and the same holds for $V_i^-$.
Since this condition holds for any $i\leq n$, we get that $P$ is generated by $n$ pairs of parallel hyperplanes,
so $P$ is a parallelotope.
\end{proof}

\begin{proof}[Proof of Lemma \ref{l: pseudo not cubic}]
Fix parameters $n>2$ and $\varepsilon>0$, and take
$\theta_{\text{\tiny\ref{l: pseudo not cubic}}}:=\frac{1}{4n}$.
Let $0<\theta\leq \theta_{\text{\tiny\ref{l: pseudo not cubic}}}$,
and let $P$ be a polytope in $\R^n$ satisfying
condition~\eqref{eq: aux -49u3} for
$r=r_{\text{\tiny\ref{l: pseudo not cubic}}}:=\min(\widetilde r,r'',\theta/5,\varepsilon/4)$, where $\widetilde r,r''$
are given by Claims~\ref{claimthetaLemma7} %, \ref{claim1bisLemma7}
and~\ref{claim1cLemma7}.
Assume further that $\dist(P,B_\infty^n)\neq 1$.
We want to show that $P$ can be illuminated by some $S\in\Class_n(\varepsilon,\theta)$.
By Claim~\ref{claim2lemma7},
there is $i\leq n$ and two sets of vertices $\mathcal{W}_i^+\cup\{v^+\}$
and $\mathcal{W}_i^-\cup\{v^-\}$ (where $\sign(v^+_j)=\sign(v^-_j)$ for all $j\neq i$), such that one of the following two conditions holds.

\noindent\textit{Case 1.}
The affine span of $\mathcal{W}_i^+\cup\{v^+\}$ is parallel to the affine span of $\mathcal{W}_i^-\cup\{v^-\}$,
but at least one of the spans is not a supporting hyperplane for $P$.
Without loss of generality, assume that the simplex with vertices $\mathcal{W}_i^+\cup\{v^+\}$ is not a part of a facet,
whence the point $c^+:=\frac{1}{n}(v^++\sum_{w\in \mathcal{W}_i^+} w)$ belongs to the interior of $P$.
Consider the unit vector $z$ orthogonal to both affine spans of $\mathcal{W}_i^+\cup\{v^+\}$
and $\mathcal{W}_i^-\cup\{v^-\}$, 
chosen so that $\langle z, v^-\rangle< \langle z, v^+\rangle$.
Then, for all sufficiently small $\xi>0$, the vector $p=c^+-v^+ +\xi z$
illuminates both $v^+$ and $v^-$.
Indeed, illumination of $v^+$ is obvious. As for $v^-$, note that, by Claim~\ref{claim1cLemma7},
there is a point $q$ in the interior of the $(n-1)$--simplex $\conv(\mathcal{W}_i^-\cup\{v^-\})$ such that
$q-c^+$ is parallel to $v^- -v^+$. This means that $q-v^-=c^+-v^+$,
whence $v^- + p$ belongs to the interior of $P$ for small enough $\xi$.

We construct the illuminating set $\mathcal S$ for $P$ as follows.
Take any vertex $w$ of $P$ and let $\widetilde w$ be the corresponding vertex of $B_\infty^n$
(i.e.\ such that $w-\widetilde w\in r B_\infty^n$).
Further, denote by $v$ and $v'$ the vertices of $B_\infty^n$ corresponding to $v^+$ and $v^-$.
If $\widetilde w$ is not adjacent to $\{v,v'\}$ then we add to $\mathcal S$ the direction $-w$
(observe that $-w\in -\widetilde w+\varepsilon B_\infty^n$ and that $-w$ illuminates $w$).
Next, if $\widetilde w$ is adjacent to $v$ or $v'$ but does not belong to $\{v,v'\}$ then
we pick the direction $-w+(1-\theta) w_k e_k$,
where $k\neq i$ is the unique index such that $v_k=v'_k\neq \widetilde w_k$.
Observe that, in view of Claim~\ref{claimthetaLemma7}, the direction $-w+(1-\theta) w_k e_k$
illuminates $w$. On the other hand,
$-w+(1-\theta) w_k e_k\in -\widetilde w+(1-\theta)\widetilde w_k e_k+ \varepsilon B_\infty^n$.
Finally, if $\widetilde w$ coincides with either $v$ or $v'$ then
we consider the direction $p$ constructed above.
Observe that $|p_i|\leq 3r$ (assuming $\xi$ is small). Further, for any $j\neq i$ we have
$|c^+_j-v^+_j+\xi z_j|\geq \frac{1}{n}-2r-\xi\geq 2\theta$
and $\sign(p_j)=\sign(c^+_j-v^+_j +\xi z_j)=-\sign(v_j)$, where we used the definition of the set $\mathcal{W}_i^+$
and the assumption that $\xi$ is small.
Hence,
$$\frac{1}{2}p\in -\prod_{j=1}^n \bigg(\frac{v_j+v_j'}{2}\cdot [\theta,1]\bigg)+2r B_\infty^n.$$
We add $\frac{1}{2}p$ to $S$ as the distinguished direction.
By the construction of $S$, it illuminates all vertices of $P$,
and belongs to the class $\Class_n(\varepsilon,\theta)$.

\noindent\textit{Case 2.}
Suppose that the affine span of $\mathcal{W}_i^+\cup\{v^+\}$ is not parallel to the affine span of
$\mathcal{W}_i^-\cup\{v^-\}$.
Choose a vertex $\widetilde v^+\in\mathcal{W}_i^+\cup\{v^+\}$ with the smallest distance to the affine span $H$ of
$\mathcal{W}_i^-\cup\{v^-\}$, and fix the unit vector $z$ orthogonal to $H$ and such that
$\langle z,\widetilde v^+\rangle>0$. Let $\widetilde v^-$ be the vertex in $\mathcal{W}_i^-\cup\{v^-\}$
corresponding to $\widetilde v^+$ (i.e.\ $\sign(\widetilde v^+_j)=\sign(\widetilde v^-_j)$ for all $j\neq i$),
let $c^+$ be the average of points in $\mathcal{W}_i^+\cup\{v^+\}$, and consider the vector
$\widetilde p:= c^+-\widetilde v^+ -\xi z$, where $\xi>0$ is a small parameter. 
Observe that for small $\xi$, $\widetilde p$ illuminates $\widetilde v^+$ by construction.
Further, by Claim~\ref{claim1cLemma7}, there is a point $q$ in the interior of the simplex $\mathcal{W}_i^-\cup\{v^-\}$,
such that $q- c^+$ is parallel to $\widetilde v^- - \widetilde v^+$.
Since the span of $\mathcal{W}_i^+\cup\{v^+\}$ is not parallel to $H$ and by our choice of $\widetilde v^+$ and $z$,
we have that $\langle c^+-\widetilde v^+,z\rangle>0$ while $\langle q- \widetilde v^-,z\rangle=0$.
Hence, the point $\widetilde v^- + (c^+-\widetilde v^+)$ lies in the interior of the interval joining $c^+$ and $q$
i.e.\ in the interior of $P$, and so
$\widetilde p$ illuminates $\widetilde v^-$ provided that $\xi$ is sufficiently small.
We will construct an illuminating set $S$ for $P$ taking $\frac{1}{2}\widetilde p$ as the distinguished direction.
The rest of the argument is very similar to the first case, and we omit the details.

\end{proof}

\end{document}